 \newtheorem{thm}{Theorem}[section]
 \newtheorem{cor}[thm]{Corollary}
 \newtheorem{lem}[thm]{Lemma}
 \newtheorem{prop}[thm]{Proposition}
 \theoremstyle{definition}
 \theoremstyle{remark}
 \newtheorem{rem}[thm]{Remark}
 \numberwithin{equation}{section}
\newcommand{\M}{\mathcal{M}}
\newcommand{\extr}{{\rm extr\kern 2pt}}
\newcommand{\supp}{{\rm supp\kern 2pt}}
\begin{document}
%-------------------------------------------------------------------------
% editorial commands: to be inserted by the editorial office
%
%\firstpage{1}
%\volume{228}
%\Copyrightyear{2004}
%\DOI{003-0001}
%
%
%\seriesextra{Just an add-on}
%\seriesextraline{This is the Concrete Title of this Book\br H.E. R and S.T.C. W, Eds.}
%
% for journals:
%
%\firstpage{1}
%\issuenumber{1}
%\Volumeandyear{1 (2004)}
%\Copyrightyear{2004}
%\DOI{003-xxxx-y}
%\Signet
%\commby{inhouse}
%\submitted{March 14, 2003}
%\received{March 16, 2000}
%\revised{June 1, 2000}
%\accepted{July 22, 2000}
%
%
%
%---------------------------------------------------------------------------
%Insert here the title, affiliations and abstract:
%
\title[ Logarithmic Submajorization  ]
 {Logarithmic submajorization, uniform majorization and H\"older type inequalities for $\tau$-measurable operators}
%----------Author 1
\author{PG Dodds}
\address{%
\\
College of Science and Engineering,
Flinders University\\
GPO Box 2100,
Adelaide 5001, Australia}

\email{peter@csem.flinders.edu.au}

%\thanks{This work was partially supported by the Australian Research Council.}
%----------Author 2
\author{TK Dodds}
\address{%
\\
College of Science and Engineering,
Flinders University\\
GPO Box 2100,
Adelaide 5001, Australia}

\email{theresa@csem.flinders.edu.au}

%----------Author 3
\author{FA Sukochev}
\address{%
\\
School of Mathematics and Statistics,
UNSW\\
Kensington, NSW 2052, Australia\\
}
\email{f.sukochev@nsw.edu.au}
\thanks{ The work of the third and fourth  authors was  supported by the Australian Research Council.}

%----------Author 4
\author{ D.Zanin}
\address{%
\\
School of Mathematics and Statistics,
UNSW\\
Kensington, NSW 2052, Australia\\}

\email{d.zanin@nsw.edu.au}

%\thanks{The work of the fourth author was supported by the Australian Research Council.}
%----------classification, keywords, date
\subjclass{Primary 46L52; 47A63 Secondary 46E30,47A30}

\keywords{measurable operators, logarithmic submajorisation, Fuglede-Kadison theorem, uniform majorisation, semifinite von Neumann algebra}

%\date{August 2015}
%----------additions
\dedicatory{Dedicated to the memory of W.A.J. Luxemburg}
%%% ----------------------------------------------------------------------

\begin{abstract}We extend the notion of the determinant function $\Lambda$, originally introduced by T.Fack for $\tau$-compact operators, to a natural algebra of $\tau$-measurable operators affiliated with a semifinite von Neumann algebra which coincides with that defined by Haagerup and Schultz in the finite case and on which the determinant function is shown to be submultiplicative. Application is given to H\"older type inequalities via general Araki-Lieb-Thirring inequalities due to Kosaki and Han and to a Weyl-type theorem for uniform majorization. 
\end{abstract}

%%% ----------------------------------------------------------------------
\maketitle
%%% ----------------------------------------------------------------------
%\tableofcontents
%\section{Document Preamble}

%\section{Introduction} 

%\centerline{P.G.Dodds, T. K. Dodds, F.A.Sukochev and D.Zanin}
%\centerline{August 2015}
\section{Introduction }
\bigskip
In their study of Brown measures of unbounded operators affiliated with a finite von Neumann algebra $({\mathcal M},\tau)$, Haagerup and Schultz extended the classical 
Fuglede-Kadison determinant
\begin{equation*}
\Delta(x)
=\exp\left(\int_0^{\tau({\bf 1})}\log t\ d(\tau e^{\vert x\vert})(s)\right),\quad x\in {\mathcal M}
\end{equation*} 
to a multiplicative functional $\Delta$ on the subalgebra ${\mathcal M}^\Delta$ consisting of those $\tau$-measurable operators $x$ affiliated with ${\mathcal M}$ for which
\begin{equation*}
\tau(\log_+\vert x\vert)=\int_0^\infty\log_+td(\tau e^{\vert x\vert}) (t)<\infty
\end{equation*}
Here $e^{\vert x\vert} (\cdot)=\chi_{(\cdot)}(\vert x\vert)$ is the spectral measure of $\vert x\vert $ and $\tau e^{\vert x\vert }(\cdot)=\tau(e^{\vert x\vert}(\cdot))$ is the corresponding Borel measure supported on the spectrum $\sigma(\vert x\vert)$ of $\vert x\vert$.

A principal aim of this paper is to extend the Haagerup-Schultz approach from the setting of finite von Neumann algebras to the semifinite case. Our approach is via the determinant function $\Lambda(\cdot)$ given by setting 
\begin{equation*}
\Lambda(t;x)=\exp \left(\int_0^t \log \mu(s;x)\, ds \right),\quad t>0,\quad x\in L_{\log_+}(\tau).
\end{equation*}
Here $\mu(\cdot;x)$ is the generalised singular value function (see below)
and $L_{\log_+}(\tau)$ is the natural extension of the algebra ${\mathcal M}^\Delta$ of Haagerup-Schults to the general semifinite setting. The determinant function $\Lambda$ goes back to A.Grothendieck~\cite{G} and was studied by Fack ~\cite{Fa},\cite{Fa83} in the case  of $\tau$-compact operators and subsequently by Fack and Kosaki~\cite{FaKo1986}
for operators satisfying a "Lorentz space"-type condition(see the definition of this condition in the domments following Proposition 3.1 below). 

Basic properties of  the class $L_{\log_+}(\tau)$ are studied in the third section where it is noted that this class is an algebra properly containing the space of $\tau$-compact operators as well as those $\tau$-measurable operators
satisfying a "Lorentz space"-type condition. One of the principal results of the paper is given in Theorem~\ref{thmFK} which establishes the submultiplicativity of the determinant $\Delta$ on the algebra $L_{\log_+}(\tau)$. A principal ingredient of the proof is the multiplicativity of the Haagerup-Schultz
functional $\Delta$ on the algebra ${\mathcal M}^\Delta$. 

In subsequent sections, attention is directed to H\"older type
inequalities for $\tau$-measurable operators. Principal results in Section 5 include a submajorization
inequality of H\"older type given in Proposition ~\ref{propKIp3} and 
corresponding symmetric norm inequalities given in
Proposition~\ref{propKIp4}, which are established using techniques based on logarithmic
submajorization. See, for example, Hiai ~\cite{Hi1997} in the 
finite-dimensional setting. While  these results extend inequalities for
unitarily invariant norms given in ~\cite{Ko1998} 
Theorem 3, the techniques given there do
not extend to the more general setting and a 
crucial tool  is a strengthened version ~\cite{Ha} given in
Proposition~\ref{propalt1} of a
submajorization inequality of Araki-Lieb-Thirring type due to 
Kosaki~\cite{Ko1992} in the setting of semi-finite von Neumann algebras and
based on properties of the Fuglede-Kadison determinant. This section then
concludes with a convexity inequality (Proposition~\ref{prophiai3}) which is 
due to Hiai and Zhan [HZ] in the
matrix setting for unitarily invariant norms.

We 
show in Section 6 (Theorem~\ref{thmWI}) that Weyl's submajorization 
inequality for the singular
values of a product (~\cite{FaKo1986} Theorem 4.2(iii)) continues to hold for
uniform majorization, a notion introduced by Kalton
and Sukochev ~\cite{KS}
 which is a strengthening of the classical notion of
submajorization in the sense of Hardy, Littlewood and Polya.  This permits us  to give a simple proof of generalized
H\"older inequalities for general symmetric spaces proved recently by
Bekjan and Ospanov~\cite{BO} and Albadawi ~\cite{Al}.

{\it This paper is dedicated to the memory of Wim Luxemburg. It is with great affection that the first two authors recall his Caltech lectures on the theory of integration which introduced them to the paper of Grothendieck, and to the ideas of decreasing rearrangements of operators, and the determinant function 
which plays a central role in this paper.}

\section{Preliminaries and notation}
Throughout this paper $\M\subseteq {\mathcal B}({\mathcal H})$
will denote a semifinite von Neumann algebra on some Hilbert space ${\mathcal
H}$ (here, ${\mathcal B}({\mathcal H})$ is the algebra of all
bounded linear operators on ${\mathcal H}$ equipped with the
operator norm denoted by $\|\cdot\|_\infty$ or
$\|\cdot\|_{{\mathcal B}({\mathcal H})}$). Unless otherwise
stated, it will be assumed throughout that $\M$ is equipped with a
fixed faithful normal semifinite trace $\tau$. For standard facts
concerning von Neumann algebras,
 we refer to \cite{Di}, \cite{Ta}.
 The identity in $\M$ \  is denoted
by ${\textbf 1}$ and we denote by ${\mathcal P}\left({\mathcal
M}\right)$ the complete lattice of all (self-adjoint) projections
in $\M$. A linear operator $x:{\mathcal D}(x)\to {\mathcal H} $,
with domain ${\mathcal D}(x)\subseteq {\mathcal H}$, is said to be
{\it affiliated with}  $\M$ \ if $yx\subseteq xy$ for all $y$ in the commutant
$\M^\prime$ of $\M$ (equivalently, $ux=xu$  for all unitary $u$ in
 $\M^\prime$). . For any self-adjoint operator
$x$ on $\mathcal{H}$, its spectral measure is
denoted by $e^{x}$. A self-adjoint operator $x$ is affiliated with $\mathcal{%
M}$ if and only if $e^{x}\left( B\right) \in \mathcal{P}\left( \mathcal{M}%
\right) $ for any Borel set $B\subseteq \mathbb{R}$.

The closed and densely defined operator $x$, affiliated with
$\M$, is called $\tau$-{\it measurable} if and only if there
exists a number $s\geq 0$ such that
$$
\tau\left(e^{\left\vert x\right\vert}(s,\infty) \right)<\infty.
$$
 The collection of all $\tau$-measurable operators is denoted by $S(\tau)$.
 With the sum and product defined as the respective closures of the algebraic sum and
product, it is well known that $S(\tau)$ \ is a *-algebra. For
$\epsilon,\ \delta>0$, we denote by $V(\epsilon,\delta)$ the set
of all $x\in S(\tau)$ for which there exists an orthogonal
projection $p\in P(\M)$ such that $p({\mathcal
H})\subseteq{\mathcal D}(x),\ \Vert xp\Vert _{{\mathcal
B}({\mathcal H})}\leq \epsilon$ and  $\tau ({\textbf 1}-p)\leq
\delta$. The sets $\{V(\epsilon,\delta):\epsilon,\ \delta >0\}$
form a base at $0$ for a metrizable Hausdorff topology on
$S(\tau)$, which is called the {\it measure topology}. Equipped
with this topology, $S(\tau)$ is a complete topological
$*$-algebra. These facts and their proofs can be found in the
papers \cite{Ne},~\cite{Te}. See also ~\cite{DPS}

The collection of all closed, densely defined operators $x$ in
$H$, affiliated with the von Neumann algebra $\mathcal{M}$ and
satisfying $\tau \left( e^{\left\vert x\right\vert }\left(
\lambda,\infty \right) \right) <\infty $ for all $\lambda >0$,
will be denoted by $S_{0}\left( \tau \right) $. The elements of
$S_{0}\left( \tau \right)$ are sometimes called $\tau
$\textit{-compact operators}. Evidently, each $x\in S_{0}\left(
\tau \right) $, is $\tau $-measurable, that is, $S_{0}\left( \tau
\right) \subseteq S\left( \tau \right) $.

For $x\in S(\tau)$, the {\it (generalised)
singular value function} $\mu(x)=\mu(\cdot;x)=\mu(\cdot ;\vert x\vert)$ is
defined by
$$
\mu \left( t;x\right) :=\inf \left\{ s\geq 0:d(s;|x|) \leq
t\right\} , \quad t\geq 0,
$$
where
$$
d(s;|x|):=\tau \left( e^{\left\vert x\right\vert }\left( s,\infty
\right) \right), \quad s\geq 0.
$$
It follows directly that the singular value function $\mu(x)$ is a
decreasing, right-continuous function on the positive half-line
$[0,\infty)$. Moreover, $\mu(uxv)\leq \Vert u\Vert_\infty \Vert v\Vert_\infty
\mu(x)$ for all $u,v\in \M$ and $x\in S(\tau)$ and
$\mu(f(x))=f(\mu(x))$
 whenever $0\leq x\in S(\tau)$ and $f$ is  an
increasing continuous function on $[0,\infty)$ which satisfies $f(0)=0$.

It should be observed that a sequence $\left\{ x_{n}\right\} _{n=1}^{\infty
}$ in $S\left( \tau \right) $ converges to zero for the measure topology if
and only if $\mu \left( t;x_{n}\right) \rightarrow 0$ as $n\rightarrow
\infty $ for all $t>0$.

If  $m$ denotes Lebesgue measure on the semiaxis $[0,\infty)$, and
if we consider $L^\infty(m)$ as an Abelian von Neumann algebra
acting via multiplication on the Hilbert space   ${\mathcal
H}=L^2(m)$, with the trace given by integration with respect to
$m$, then $S(m) $ consists of all measurable functions on
$[0,\infty)$ which are bounded except on a set of finite measure.
In this case for $f\in S(m)$, the generalized singular value
function $\mu(f)$ is precisely the classical decreasing
rearrangement of the function $\vert f\vert$,  which is usually
denoted by $f^*$. In this setting, convergence for the measure
topology coincides with the usual notion of convergence in
measure.

If $\M ={\mathcal B}({\mathcal H})$ and $\tau $ is the standard
trace, then $S(\tau) =\M$, the measure topology coincides with the
operator norm topology. If $x\in S(\tau) $, then $x$ is compact if and only if
 $\lim_{t\to \infty}\mu(t;x)=0$; in this case,
$$
 \mu _n(x)=\mu \left(t;x\right), \quad t\in [n,n+1),\quad  n=0,1,2,\dots ,
$$
and the sequence $\{\mu _n(x)\}_{_{n=0}}^{\infty }$ is just
the sequence of eigenvalues of $\vert x\vert $ in non-increasing order
 and counted according to multiplicity.

The real vector space $S_{h}\left( \tau \right) =\left\{ x\in S\left( \tau
\right) :x=x^{\ast }\right\} $ is a partially ordered vector space with the
ordering defined by setting $x\geq 0$ if and only if $\left\langle x\xi ,\xi
\right\rangle \geq 0$ for all $\xi \in \mathcal{D}\left( x\right) $. The
positive cone in $S_{h}\left( \tau \right) $ will be denoted by $S\left(
\tau \right) ^{+}$. If $0\leq x_{\alpha }\uparrow _{\alpha }\leq x$ holds in
$S\left( \tau \right) ^{+}$, then $\sup_{\alpha }x_{\alpha }$ exists in $%
S\left( \tau \right) ^{+}$. The trace $\tau $ extends to $S\left( \tau
\right) ^{+}$ as a non-negative extended real-valued functional which is
positively homogeneous, additive, unitarily invariant and normal. This
extension is given by
\[
\tau \left( x\right) =\int_{0}^{\infty }\mu \left( t;x\right) dt,\ \ \ x\in
S\left( \tau \right) ^{+},
\]
and satisfies $\tau \left( x^{\ast }x\right) =\tau \left( xx^{\ast }\right) $
for all $x\in S\left( \tau \right) $. It should be observed that if $f$ is
an increasing continuous function on $\left[ 0,\infty \right) $ satisfying 
$%
f\left( 0\right) \geq 0$, then
\begin{equation}
\label{eqn001}
f(\mu(x))=\mu(f(\vert x\vert))
\end{equation}
and if $f(0)=0$ then
\begin{equation}
\tau \left( f\left( \left\vert x\right\vert \right) \right)
=\int_{0}^{\infty }\mu \left( t;f\left( \left\vert x\right\vert \right)
\right) dt=\int_{0}^{\infty }f\left( \mu \left( t;x\right) \right) dt
\label{eq01}
\end{equation}
for all $x\in S\left( \tau \right) $.

%If $({\mathcal N},\sigma) $ is a semifinite von Neumann algebra,
If $x,y\in S(\tau)$ 
%and $y\in S(\sigma)$ 
then $x$ is said to be {\it submajorised}
 by $y$ (in the
sense of Hardy, Littlewood and Polya) if and only if
$$
\int_0^t\mu(s;x)ds\leq
\int_0^t\mu(s;y)ds
$$
for all $t\geq 0$. We write $x\prec\prec y$, or equivalently, $\mu(x)\prec\prec
\mu(y)$.

A linear subspace
$E\subseteq S(\tau)$, equipped with a norm $\Vert \cdot \Vert _E$ will be
called \textit{symmetrically normed} if $x\in E$, $y\in S\left( \tau
\right) $ and $\mu \left( y\right) \leq \mu \left( x\right) $ imply that $%
y\in E$ and $\left\Vert y\right\Vert _{E}\leq \left\Vert x\right\Vert _{E}$.
If a symmetrically normed space is Banach, then it will
be simply called a 
\textit{
 symmetric space}. If $\M$ is $L_\infty(m)$,
then a symmetrically normed space
$E\subseteq S(m)$ will be called, for simplicity, a symmetrically
normed space on $[0,\infty)$. If $E\subseteq S(\tau)$ is a symmetrically normed space, then the
embedding of $E$ into $S(\tau)$ is continuous from the norm topology of $E$ to
the measure topology on $S(\tau)$.

A wide class of non-commutative symmetrically normed
spaces may be
 constructed
 as follows. If $E\subseteq S(m)$ is  
  symmetrically normed space on
 $[0,\infty)$, set
 \[
E\left( \tau \right) =\left\{ x\in S\left( \tau \right) :\mu \left( x\right)
\in E\right\},\quad \Vert x\Vert _{E(\tau)}:=\Vert \mu(x)\Vert _E
\]
It may be shown (\cite{KS}; see also \cite{DDP1989a}, ~\cite{DP2}), that
$(E(\tau),\Vert \cdot\Vert _{E(\tau)})$ is  
symmetrically normed and is a Banach space if $E$ is a
 Banach space. 
 
 Important special cases of 
 this
 construction occur when $E$ is taken to be the familiar Lebesgue space 
 $L_p, 1\leq p\leq \infty$. In this
 paricular case, the spaces $E(\tau)=L_p(\tau)$ are the familiar non-commutative
 $L_p$-spaces and in this setting we have that $L_\infty(\tau)={\mathcal M}$.
  In the special case that ${\mathcal M}$ is $
 B({\mathcal H})$ the corresponding non-commutative $L_p$ spaces are the
 familiar Schatten classes ${\mathfrak S}_p$. As is well-known, the space
 $L_1(\tau)$ may be identified with the von Neumann algebra predual ${\mathcal
 M}_*$ of
 ${\mathcal M}$ with respect to trace duality. We observe that, whenever
 $E\subseteq S(m)$ is a symmetric space, then the continuous
 inclusions
 \begin{equation*}
 L_1(\tau)\cap{\mathcal M}
 \subseteq E(\tau)\subseteq L_1(\tau)+{\mathcal M}
 \end{equation*}
 hold and that, if $x\in S(\tau)$, then $x\in L_1(\tau)+{\mathcal M}$ if and
 only if $\int_0^t\mu(s;x)ds<\infty$ for at least one $t>0$ or, equivalently, for all $t>0$.
 
 For further details and proofs, we refer the
 reader to  ~\cite{DDP1989a}, ~\cite{DDP1993},  ~\cite{KS}, 
  ~\cite{DP2},\cite{DPS},\cite{LSZ}.

\section{The algebra $L_{\log_+}(\tau)$}

\bigskip
Throughout, we will write $L_1,\ L_\infty$  rather than $L_1(m),L_\infty(m)$ for the usual Lebesgue spaces on the semiaxis $[0,\infty)$ equipped with Lebesgue measure $m$.\hfil\break
We set
\begin{equation}
\label{eqnFK1}
L_{\log_+}(\tau)
:=\{x\in S(\tau):\log_+\vert x\vert \in L_1(\tau)+\mathcal M\}
\end{equation}
Here, 
$$\log_+t=\max\{\log t,0\},\quad t>0.$$

Observing that 
\begin {equation*}
\mu(\log_+ \vert x\vert)=\log_+\mu(x),\quad x\in S(\tau),
\end{equation*}
it follows that, if $x\in S(\tau)$, then
\begin{align}
\label{eqnFK2}
x\in L_{\log_+}(\tau)\quad 
&\iff\quad  
\int_0^t \log_+\mu(x)dm<\infty,\quad \forall t>0\\
&\iff \quad \log_+\mu(x)\in L_1+L_\infty\\
&\iff \quad \int_0^1 \log_+\mu(x)dm<\infty.
\end{align}
It is clear that if $x\in L_{\log}(\tau)$, then $x^*, \vert x\vert, uxv \in L_{\log}(\tau)$ for all unitary $u,v\in {\mathcal M}$; moreover, if $x\geq 0$ and $\alpha>0$ then the equality $\mu(x^\alpha)=\mu(x)^\alpha$ readily implies that $x^\alpha\in L_{\log}(\tau)$. 

If $\tau({\bf 1})<\infty$, then it follows from \cite{DPS} Chapter III , remarks preceding Lemma 3.6, that

\begin{equation*}
\tau(\log_+\vert x\vert)
=\int_0^{\tau({\bf 1})}\log_+\mu(x)dm
=\int_0^{\tau({\bf 1})}\log_+sd(\tau e^{\vert x\vert})(s).
\end{equation*}
Consequently, if $\tau({\bf 1})<\infty$, then the class $L_{\log_+}(\tau)$ coincides with the class ${\mathcal M}^\Delta$ introduced by Haagerup and Schultz \cite{HS}. 

We now observe that the class
$L_{\log_+}(\tau)$ is an algebra. First recall that, if $s>0$ then the dilation operator $\sigma$ is defined on $S(m)$ by setting
$$
(\sigma_sx)(t) =x(t/s),\quad t>0.
$$
If $x,y\in S(\tau)$, then
\begin{equation}
\label{eqnDil}
\mu(x+y)\leq \sigma_2(\mu(x))+\sigma_2(\mu(y)),\quad \mu(xy)\leq \sigma_2(\mu(x))\sigma_2(\mu(y))
\end{equation}
See, for example, \cite{FaKo1986} Lemma 2.5. See also \cite{LSZ}.

\begin{prop}
\label{propFK}
 If $x,y\in L_{\log_+}(\tau)$, then $x+y,xy\in L_{\log_+}(\tau)$.
\end{prop}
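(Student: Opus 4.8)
The plan is to reduce the statement to the single-variable criterion \eqref{eqnFK2}, namely that $z\in L_{\log_+}(\tau)$ if and only if $\int_0^1\log_+\mu(z)\,dm<\infty$, and then to control $\mu(x+y)$ and $\mu(xy)$ using the dilation estimates \eqref{eqnDil}. The key observation is that $\log_+$ is subadditive in the sense that $\log_+(st)\le \log_+s+\log_+t$ for $s,t>0$, and monotone; together with the elementary fact that $\log_+(a+b)\le \log 2+\log_+a+\log_+b$ for $a,b\ge 0$, this lets us pass the $\log_+$ through both inequalities in \eqref{eqnDil}.

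First I would treat the product. From \eqref{eqnDil} we have $\mu(t;xy)\le \mu(t/2;x)\,\mu(t/2;y)$ for all $t>0$, so $\log_+\mu(t;xy)\le \log_+\mu(t/2;x)+\log_+\mu(t/2;y)$. Integrating over $t\in(0,2)$ and changing variables $s=t/2$ gives
\begin{equation*}
\int_0^2\log_+\mu(t;xy)\,dt\le 2\int_0^1\log_+\mu(s;x)\,ds+2\int_0^1\log_+\mu(s;y)\,ds<\infty,
\end{equation*}
where finiteness of the right-hand side is exactly the hypothesis $x,y\in L_{\log_+}(\tau)$ via \eqref{eqnFK2}. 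Since $\int_0^1\log_+\mu(xy)\,dm\le\int_0^2\log_+\mu(xy)\,dm$, the criterion \eqref{eqnFK2} yields $xy\in L_{\log_+}(\tau)$. The sum is handled identically: $\mu(t;x+y)\le\sigma_2(\mu(x))(t)+\sigma_2(\mu(y))(t)=\mu(t/2;x)+\mu(t/2;y)$, hence $\log_+\mu(t;x+y)\le\log 2+\log_+\mu(t/2;x)+\log_+\mu(t/2;y)$, and integrating over $(0,2)$ gives a finite bound by the same change of variables, so $x+y\in L_{\log_+}(\tau)$.

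There is no serious obstacle here; the only point requiring a modicum of care is the bookkeeping in the change of variables and making sure one integrates over an interval large enough (here $(0,2)$) that after the substitution one lands inside $(0,1)$ — this is why the dilation factor $2$ forces us to start from the criterion on $(0,1)$ rather than attempting to stay on $(0,1)$ throughout. One should also note at the outset the elementary inequalities $\log_+(ab)\le\log_+a+\log_+b$ and $\log_+(a+b)\le\log 2+\log_+a+\log_+b$ for $a,b\ge0$, both of which follow by splitting into the cases where the argument is $\le1$ or $>1$ and using monotonicity of $\log_+$ together with $\log(a+b)\le\log 2+\log\max\{a,b\}$ when $\max\{a,b\}\ge1$. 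With these in hand the argument is a direct computation.
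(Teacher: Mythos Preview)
Your proposal is correct and follows essentially the same approach as the paper: both proofs use the dilation estimates \eqref{eqnDil} together with the subadditivity-type inequalities $\log_+(ab)\le\log_+a+\log_+b$ and $\log_+(a+b)\le\log 2+\log_+a+\log_+b$. The only cosmetic difference is that the paper phrases the conclusion as $\log_+\mu(x+y),\log_+\mu(xy)\in L_1+L_\infty$ (using that $\sigma_2$ preserves $L_1+L_\infty$), whereas you work directly with the integral criterion $\int_0^1\log_+\mu(\cdot)\,dm<\infty$ and a change of variables; these are equivalent formulations of the same computation.
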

\begin{proof}If $x,y\in  L_{\log_+}(\tau)$ then, using (\ref{eqnDil}), 
\begin{equation*}
\mu(x+y)\leq \sigma_2(\mu(x))+\sigma_2(\mu(y))\leq \max\{2\sigma_2(\mu(x)),2\sigma_2(\mu(y))\}.
\end{equation*}
Consequently,
\begin{align*}
\log_+(\mu(x+y))
&\leq \log_+ \left(\max\{2\sigma_2(\mu(x)),2\sigma_2(\mu(y))\}\right)\\
&=\max\{\log_+(2\sigma_2(\mu(x)),\log_+(2\sigma_2(\mu(y))\}\\
&\leq \log_+(2\sigma_2(\mu(x)))+\log_+(2\sigma_2(\mu(y)))\\
&=\sigma_2(\log_+(2\mu(x))+\log_+(2\mu(y)))\\
&\leq \sigma_2(\log_+\mu(x)+\log_+\mu(y)+2\log 2)
\end{align*}
Since $$
\log_+\mu(x),\log_+\mu(y)\in L_1+L_\infty,
$$
as follows from (\ref{eqnFK2}), it follows that
$$
\sigma_2(\log_+\mu(x)+\log_+\mu(y)+2\log 2)\in L_1+L_\infty.
$$
This implies that 
$$\log_+\mu(x+y)\in L_1+L_\infty,$$
and accordingly, $x+y\in L_{\log_+}(\tau)$.

To see that $xy\in L_{\log_+}(\tau)$, observe first, again using (\ref{eqnDil}), that
\begin{equation*}
\mu(xy)\leq \sigma_2(\mu(x)\mu(y))=\sigma_2(\mu(x))\sigma_2(\mu(y)).
\end{equation*}
Consequently, 
\begin{align*}
\log_+\mu(xy)
&\leq \log_+\left(\sigma_2(\mu(x))\sigma_2(\mu(y))\right)
=\sigma_2(\log_+\mu(x)+\log_+\mu(y))\\
&\in L_1+L_\infty,
\end{align*}
and this implies that $xy\in L_{\log_+}(\tau)$.

Finally, since $\mu(x)=\mu(x^*)$, the final assertion of the proposition is trivial.

\end{proof}

It should be noted that if $x\in S(\tau)$ satisfies a (so-called) "Lorentz space"-type condition (see \cite{FaKo1986})
\begin{equation}
\label{eqnFK3}
x\in {\mathcal M}\quad {\rm or}\quad \mu(s;x)\leq Cs^{-\alpha}\quad (C,\alpha>0),\ s>0
\end{equation}
then $x\in L_{\log_+}(\tau)$. Indeed, this assertion is clear if $x\in {\mathcal M}$. Assume then that $x\in S(\tau)$ and that $\mu(s;x)\leq C s^{-\alpha}$ for some $C,\alpha>0$ and all $s>0$ and for simplicity, suppose that $C=1$. Observe that
$$\int_0^1\log_+\mu(s;x)ds
\leq \int_0^1\log_+s^{-\alpha}ds=-\alpha\int_0^1\log sds<\infty
$$
 and this suffices to show that $x\in  L_{\log_+}(\tau)$. In particular, it follows that if $x\in L^p(\tau)$ for some $0<p<\infty$, then $x\in L_{\log_+}(\tau)$.
 Indeed, as noted in ~\cite{FaKo1986}, in this case
 $$
 \mu(s;x)^p\leq s^{-1}\int_0^s\mu(x)^pdm\leq s^{-1}\Vert x\Vert _p^p
 $$
 for all $s>0$. 
 
 On the other hand, if $x\in L_{\log_+}(\tau)$, then it need not be the case that $x$ satisfies any Lorentz space condition. By way of example, if 
 $f(t)=\exp(t^{-\frac{1}{2}}), t>0$, then it is evident that $f\in L_{\log_+}$. However, since
 \begin{equation*}
 \lim_{t\to 0}t^{-\alpha}\exp(-t^{-{\frac{1}{2}}})=0, \quad\forall \alpha>0,
 \end{equation*}
 it follows that $f$ fails to satisfy any condition of the type (\ref{eqnFK3}).
 
 One immediate consequence of the preceding is that 
 \begin{equation*}
 L_1(\tau)+{\mathcal M}\subseteq L_{\log_+}(\tau)\subseteq S(\tau).
 \end{equation*}
 Note that the inclusion on the left hand side is, in general, proper by observing that, if  $f(t)= 1/t, t>0$, then $f\in L_{\log_+}$ but  
 $\mu(f)\not\in L_1 + L_\infty$.

 The proposition which follows is due to L.G.Brown \cite{Br} Proposition 1.11. It is an extension of the Weyl majorization theorem given in ~\cite{Fa} Corollary 4.2 which goes back to the paper of Weyl\cite{W}.
 
 \begin{prop}
 \label{propBr}
 Suppose that $f,g:(0,\infty)\to[0,\infty)$ be non-increasing.
 If $f,g\in L_{log_+}$ then the following statements are equivalent:
 \begin{align*}
 (i)\quad \int_0^t\log f(s)ds&\leq \int_0^t \log g(s)ds,\quad \forall t>0.\\
 (ii)\quad \int_0^\infty\log _+(rf(s))ds&\leq \int_0^\infty \log_+ (rg(s))ds,\quad \forall r>0,\quad \forall t>0.\\
 (iii)\quad \int_0^t\varphi(f(s))ds&\leq \int_0^t\varphi(g(s))ds,\quad \forall t>0,
 \end{align*}
 \quad \quad for all continuous increasing functions $\varphi:[0,\infty)\to [0,\infty)$ such that $\varphi(0)=0$ and $\varphi\circ\exp$ is convex.
\end{prop}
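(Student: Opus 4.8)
The plan is to prove the implications $(iii)\Rightarrow(i)\Rightarrow(ii)\Rightarrow(iii)$, exploiting the substitution $u=\log s$ (equivalently $f\mapsto f\circ\exp$) to convert logarithmic submajorization of $f,g$ into ordinary Hardy--Littlewood--Polya submajorization of the functions $F=\log\circ f\circ\exp$ and $G=\log\circ g\circ\exp$ on the real line, for which classical Hardy--Littlewood--Polya theory applies. First I would record the elementary but essential finiteness facts: since $f,g\in L_{\log_+}$, statement (iii) in \eqref{eqnFK2} guarantees $\int_0^1\log_+f<\infty$ and $\int_0^1\log_+g<\infty$, while non-increasingness and $f,g\in L_1+L_\infty$ (note $L_{\log_+}(\tau)\supseteq L_1(\tau)+\mathcal M$ is the wrong direction, so instead use that $f$ non-increasing and finite a.e.\ forces $\log_+f$ locally integrable away from $0$, and near $\infty$ boundedness of $f$ on a co-finite set) ensure that all the integrals $\int_0^t\log f(s)\,ds$ take values in $[-\infty,+\infty)$ and are finite for each fixed $t$ when $f$ is, say, bounded below by a positive constant on $(0,t]$ — in general one truncates. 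Thus each integral in $(i)$--$(iii)$ is well defined.

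For $(i)\Rightarrow(ii)$: fix $r>0$ and apply $(i)$ to the functions $rf,rg$ (which still lie in $L_{\log_+}$ and are non-increasing), obtaining $\int_0^t\log(rf)\le\int_0^t\log(rg)$ for all $t$. Now $\log_+a=\lim_{\alpha\to\infty}$ of suitable truncations, but more directly: the key classical fact is that if two real non-increasing functions $F,G$ on $(0,\infty)$ satisfy $\int_0^tF\le\int_0^tG$ for all $t>0$, then for every increasing convex $\psi$ one has $\int_0^t\psi(F)\le\int_0^t\psi(G)$ provided both sides make sense; taking $\psi(x)=x_+=\max\{x,0\}$ and $F=\log(rf)$, $G=\log(rg)$, and letting $t\to\infty$ gives exactly $(ii)$. (The passage to $t=\infty$ is justified by monotone convergence, since $\log_+(rf)\ge 0$.) I would cite the standard convex-majorization lemma, e.g.\ from \cite{LSZ} or the classical Hardy--Littlewood--Polya reference, rather than reprove it.

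For $(ii)\Rightarrow(iii)$: given an increasing continuous $\varphi$ with $\varphi(0)=0$ and $\varphi\circ\exp$ convex, the function $\Phi:=\varphi\circ\exp$ is a convex increasing function on $[-\infty,\infty)$ (with $\Phi(-\infty)=0$). A convex increasing function with limit $0$ at $-\infty$ can be written as a positive integral combination of the elementary convex functions $x\mapsto(x-c)_+$ plus possibly a linear piece; more precisely $\Phi(x)=\Phi(-\infty)+\int_{-\infty}^\infty (x-c)_+\,d\nu(c)$ for a positive Borel measure $\nu$ (its second-derivative measure), since $\Phi$ is convex. Translating back via $x=\log s$, $c=\log r$, this expresses $\varphi(f(s))$ as $\int_0^\infty \bigl(\log(f(s)/r)\bigr)_+\,d\tilde\nu(r)=\int_0^\infty\log_+(f(s)/r)\,d\tilde\nu(r)$. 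Integrating in $s$ over $(0,t)$ — here I would first treat $t=\infty$, where $(ii)$ applies verbatim for each $r$, then use Fubini (legitimate by nonnegativity) — yields $\int_0^\infty\varphi(f(s))\,ds\le\int_0^\infty\varphi(g(s))\,ds$; the finite-$t$ version follows by the same extremal-function decomposition applied on $(0,t)$, or by a standard reduction noting $(ii)$ with $r\to\infty$ recovers $(i)$, whence the finite-$t$ convex majorization is classical.

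For $(iii)\Rightarrow(i)$: the function $\varphi(x)=\log_+ x$ itself satisfies $\varphi(0)=0$ (in the limit), is increasing and continuous, and $\varphi\circ\exp(u)=u_+$ is convex; so $(iii)$ with this $\varphi$ gives $\int_0^t\log_+f\le\int_0^t\log_+g$. This is not quite $(i)$ because of the negative part $\log_-$; to recover $(i)$ I would instead apply $(iii)$ to $\varphi_r(x)=\log_+(rx)-\log r$ for each $r>0$ — but $\varphi_r(0)=-\log r\neq0$, which is not allowed. The genuine fix, and \emph{the step I expect to be the main obstacle}, is handling the lower tail $\int_0^t\log f(s)\,ds$ when $\log f$ is not bounded below: one must argue that $(i)$ is equivalent to the family $\{\int_0^t\log_+(rf)\le\int_0^t\log_+(rg):r>0\}$, i.e.\ that $(ii)$-type statements for all dilates already encode $(i)$. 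This is essentially the content of Brown's Proposition 1.11 / Fack's Corollary 4.2, and the cleanest route is: from $(iii)$ deduce $(ii)$ by choosing $\varphi$ giving $(x-\log r)_+$ under $\exp$ (so $\varphi(x)=(\log(x/r))_+$, but now $\varphi(0)=0$ fails again unless we use $\varphi(x)=\log_+(rx)$ which has $\varphi(0)=0$ — wait, this \emph{does} vanish at $0$); so in fact $(iii)$ applied to $\varphi(x)=\log_+(rx)$, valid since $\varphi\circ\exp(u)=(u+\log r)_+$ is convex increasing and $\varphi(0)=0$, directly yields $(ii)$, and then $(ii)\Rightarrow(i)$ is the classical statement that logarithmic submajorization of all dilates implies the un-truncated logarithmic inequality (let $r\to\infty$ in $\int_0^t\bigl(\log_+(rf)-\log r\bigr)\le\int_0^t\bigl(\log_+(rg)-\log r\bigr)$, using that $\log_+(rf)-\log r\downarrow \log f$ pointwise and dominated convergence / monotone convergence on the truncations $|f|\le N$, $|f|\ge 1/N$). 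Thus the three implications close the cycle; the only delicate point throughout is the justification of these limit interchanges in the presence of the $-\infty$ values of $\log f$, which I would dispatch by truncating $f,g$ to $\max\{f,\epsilon\}$, $\max\{g,\epsilon\}$, proving everything there, and letting $\epsilon\downarrow0$ with monotone convergence.
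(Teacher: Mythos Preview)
The paper does not prove this proposition; it cites L.\,G.~Brown \cite{Br}, Proposition~1.11, and states the result without argument. Your implications $(iii)\Rightarrow(i)$ and $(i)\Rightarrow(ii)$ are correct: for the former, applying $(iii)$ with $\varphi(x)=\log_+(rx)$ gives the finite-$t$ inequality $\int_0^t\log_+(rf)\le\int_0^t\log_+(rg)$, and subtracting $t\log r$ then letting $r\to\infty$ (monotone convergence, since $\max(\log f,-\log r)\downarrow\log f$) yields $(i)$; for the latter, the standard fact that submajorization of non-increasing real functions is preserved by increasing convex maps, applied to $\psi(x)=x_+$ and followed by $t\to\infty$, suffices.

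The genuine gap is $(ii)\Rightarrow(iii)$ for finite $t$. Writing $\varphi$ as a superposition of the functions $x\mapsto\log_+(rx)$ and applying Fubini together with $(ii)$ yields only the case $t=\infty$, because $(ii)$ controls $\int_0^\infty\log_+(rf)$ and says nothing about $\int_0^t$. Both of your suggested patches fail: restricting the decomposition to $(0,t)$ still requires the unavailable finite-$t$ version of $(ii)$, and your claim that ``$(ii)$ with $r\to\infty$ recovers $(i)$'', as you actually wrote it, uses the finite-$t$ inequality $\int_0^t(\log_+(rf)-\log r)\le\int_0^t(\log_+(rg)-\log r)$ --- but you obtained that inequality only from $(iii)$, so invoking it here is circular. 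What is missing is a \emph{direct} proof of $(ii)\Rightarrow(i)$: set $F=\log f$, $G=\log g$, $\Phi_F(t)=\int_0^tF$; since $F$ is non-increasing, $\Phi_F$ is concave and one checks
\[
\int_0^\infty(F(s)+c)_+\,ds\;=\;\sup_{t\ge0}\bigl(\Phi_F(t)+ct\bigr),
\]
so $(ii)$ is exactly domination of the concave conjugates, and involutivity of the Legendre transform on concave functions gives $\Phi_F\le\Phi_G$, i.e.\ $(i)$; then $(i)\Rightarrow(iii)$ is the classical convex-majorization step with $\psi=\varphi\circ\exp$. (Your opening substitution $u=\log s$ is a red herring and is never used; the operative change of variable is $F=\log f$ in the values, not in the argument.)
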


\begin{rem}
\label{rmkLS}It should be observed that a simple consequence of Proposition~\ref{propBr} preceding is that if $x,y\in L_{\log_+}(\tau)$ and if $x$ is logarithmically submajorised by $y$ in the sense that
\begin{equation*}
\int_0^t\log \mu(s;x)ds\leq \int_0^t\log \mu(s;y)ds,\quad \forall t>0,
\end{equation*} 
then $x$ is submajorised by $y$ in the usual sense, that is,
\begin{equation*}
\int_0^t\mu(s;x)ds\leq \int_0^t \mu(s;y)ds \quad \forall t>0.
\end{equation*}
 Indeed, one need only take $\varphi(t)=t, t\geq 0$.
\end{rem}

\section{The determinant function}

If $x\in L_{\log_+}(\tau)$ 
then the {\it determinant function} $\Lambda(x)$ is defined by setting, for all $t>0$,

$$
\Lambda(t;x)=\exp \left(\int_0^t \log \mu(s;x)\, ds \right).
$$
Since $x\in L_{\log_+}(\tau)$, it follows that the integral is well-defined in the sense that it is either finite or takes the value $-\infty$. Indeed, 
$$
\Lambda(t;x)=0\Leftrightarrow \int_0^t \log \mu(s;x)ds=-\infty
$$
In the case that $x$ is $\tau$-compact, this function, which goes back to A.Grothendieck~\cite{G}, was studied by T.Fack~\cite{Fa},~\cite{Fa83} and extended further in~\cite{FaKo1986} to the class of  operators satisfying a "Lorentz space"-type condition.

We remark that, if $\tau({\bf 1})<\infty$, then $\Delta(x):=\Lambda(\tau({\bf 1});x)$ is precisely the extension of the classical Fuglede-Kadison determinant~\cite{FK} to $S(\tau)$ given by Haagerup and Schultz~\cite{HS} who proved  the following
fundamental result.
\begin{prop}
\label{propHS} If $\tau({\bf 1})<\infty$ and if $x,y\in L_{\log_+}(\tau)$, then 
$$
\Delta(xy)=\Delta(x)\Delta(y).
$$

\end{prop}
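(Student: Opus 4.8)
The plan is to reduce the general claim to the known Fuglede--Kadison case $x,y\in\mathcal M$, where the Haagerup--Schultz multiplicativity is already available through their work on the extended determinant $\Delta$. The key observation is that, since $\tau(\mathbf 1)<\infty$, we have $\Delta(x)=\Lambda(\tau(\mathbf 1);x)$ and the determinant function depends on $x$ only through $\mu(x)$, so it is insensitive to the particular choice of operator realising a given singular value function. First I would isolate the role of the finiteness of $\tau$: a $\tau$-measurable operator $x$ with $x\in L_{\log_+}(\tau)$ has $\log_+\mu(x)\in L_1([0,\tau(\mathbf 1)))$, so the only possible obstruction to integrability of $\log\mu(s;x)$ near $0$ comes from the \emph{negative} part, i.e.\ from $x$ being badly non-invertible; the integral is thus well-defined in $[-\infty,\infty)$.

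The main step is an approximation argument. For $\varepsilon>0$, consider $x_\varepsilon:=x+\varepsilon u$ for a suitable unitary $u$ (or, more robustly, pass through the polar decomposition $x=v|x|$ and set $x_\varepsilon:=v(|x|+\varepsilon\mathbf 1)$, noting $v$ is unitary when $x$ is invertible and can be extended to a unitary in general since $\tau(\mathbf 1)<\infty$). Each $x_\varepsilon$ is now bounded below, hence $\mu(s;x_\varepsilon)\ge\varepsilon>0$ for all $s$, so $\log\mu(x_\varepsilon)\in L_1([0,\tau(\mathbf 1)))$ and $\Delta(x_\varepsilon)>0$. One then argues that $\mu(x_\varepsilon)\downarrow\mu(x)$ pointwise a.e.\ as $\varepsilon\downarrow 0$ (using standard continuity properties of singular value functions under such perturbations, cf.\ the references to \cite{FaKo1986},\cite{DPS}), and applies monotone/dominated convergence to get $\Delta(x_\varepsilon)\to\Delta(x)$, and similarly $\Delta(y_\varepsilon)\to\Delta(y)$ and $\Delta(x_\varepsilon y_\varepsilon)\to\Delta(xy)$ — the last requiring that $\mu(x_\varepsilon y_\varepsilon)$ converges appropriately, which follows from the submultiplicativity estimate \eqref{eqnDil} together with the perturbation estimates. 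Thus it suffices to prove $\Delta(x_\varepsilon y_\varepsilon)=\Delta(x_\varepsilon)\Delta(y_\varepsilon)$, i.e.\ to handle the case of operators bounded below.

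For operators bounded below, I would reduce further to the bounded case: if $|x|\ge\varepsilon\mathbf 1$, truncate from above via $x^{(n)}:=v\,(|x|\wedge n\mathbf 1)$, so $x^{(n)}\in\mathcal M$, $\varepsilon\le|x^{(n)}|\le n\mathbf 1$, and $\mu(x^{(n)})\uparrow\mu(x)$; apply the classical Fuglede--Kadison multiplicativity $\Delta(x^{(n)}y^{(n)})=\Delta(x^{(n)})\Delta(y^{(n)})$ on $\mathcal M$ (this is where the original Fuglede--Kadison/Haagerup--Schultz result for \emph{bounded} operators enters), and pass to the limit once more. The point here is that for bounded invertible operators all determinants involved are finite and strictly positive, so all limits are unproblematic, and the identity $\Delta(x^{(n)}y^{(n)})=\Delta(x^{(n)})\Delta(y^{(n)})$ is genuinely classical.

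The step I expect to be the main obstacle is controlling $\Delta(x_\varepsilon y_\varepsilon)\to\Delta(xy)$: multiplicativity of the limit on the \emph{left-hand} side is delicate because the product $x_\varepsilon y_\varepsilon$ need not converge to $xy$ in any way that automatically gives convergence of the \emph{whole} integral $\int_0^{\tau(\mathbf 1)}\log\mu(s;x_\varepsilon y_\varepsilon)\,ds$ — in particular, if $\Delta(xy)=0$ one needs a lower-semicontinuity-type estimate to conclude $\Delta(x_\varepsilon y_\varepsilon)\to 0$, and if $\Delta(xy)>0$ one needs a uniform integrable domination of $\log_-\mu(x_\varepsilon y_\varepsilon)$. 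A clean way around this is to prove the two inequalities $\Delta(xy)\le\liminf_\varepsilon\Delta(x_\varepsilon y_\varepsilon)$ and $\Delta(xy)\ge\limsup_\varepsilon\Delta(x_\varepsilon y_\varepsilon)$ separately, the first from Fatou applied to $\log\mu(x_\varepsilon y_\varepsilon)\ge$ (something converging to $\log\mu(xy)$), the second from the submultiplicativity bound $\mu(x_\varepsilon y_\varepsilon)\le\sigma_2\mu(x_\varepsilon)\sigma_2\mu(y_\varepsilon)$ which gives $\Delta(x_\varepsilon y_\varepsilon)\le\Delta(\sigma_2\mu(x_\varepsilon))\Delta(\sigma_2\mu(y_\varepsilon))$ and a matching scaling identity; since in fact the statement we are proving is \emph{equality}, one can alternatively sidestep the left-hand convergence entirely by invoking the already-established finite-algebra multiplicativity on $\mathcal M$ directly on a cofinal family of bounded truncations and only passing to the limit in the three determinants after the identity is in place.
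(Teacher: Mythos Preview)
The paper does not give its own proof of this proposition: it is stated without proof and attributed to Haagerup and Schultz~\cite{HS}, then used as a black box (inside Lemma~\ref{lemFK1}) to establish the submultiplicativity of $\Lambda$ in Theorem~\ref{thmFK}. So there is no proof in the paper to compare your argument against.

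On the merits of your sketch: the reduction-by-approximation strategy is natural, and the one-variable limits $\Delta(x_\varepsilon)\to\Delta(x)$, $\Delta(y_\varepsilon)\to\Delta(y)$ go through by monotone convergence once you take $x_\varepsilon=v(|x|+\varepsilon\mathbf 1)$ with $v$ unitary. But the gap you yourself flag is real, and neither of your proposed fixes closes it. First, the dilation bound $\mu(x_\varepsilon y_\varepsilon)\le\sigma_2\mu(x_\varepsilon)\cdot\sigma_2\mu(y_\varepsilon)$, after taking logarithms and integrating over $[0,\tau(\mathbf 1)]$, yields only
\[
\log\Delta(x_\varepsilon y_\varepsilon)\le 2\log\Lambda\bigl(\tfrac{\tau(\mathbf 1)}{2};x_\varepsilon\bigr)+2\log\Lambda\bigl(\tfrac{\tau(\mathbf 1)}{2};y_\varepsilon\bigr),
\]
and the right-hand side does \emph{not} converge to $\log\Delta(xy)$; so this does not deliver the $\limsup$ bound you claim. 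Second, the ``cofinal family of bounded truncations'' alternative has exactly the same defect: $x^{(n)}y^{(n)}$ is not a spectral truncation of $xy$, so establishing $\Delta(x^{(n)}y^{(n)})\to\Delta(xy)$ is no easier than the step you are trying to bypass. In fact, dominated convergence on $\log_+\mu(x_\varepsilon y_\varepsilon)$ together with Fatou on $\log_-\mu(x_\varepsilon y_\varepsilon)$ gives $\Delta(xy)\ge\limsup_\varepsilon\Delta(x_\varepsilon y_\varepsilon)=\Delta(x)\Delta(y)$, i.e.\ the \emph{opposite} inequality to the one you attribute to Fatou; the remaining inequality $\Delta(xy)\le\Delta(x)\Delta(y)$ is exactly where the Haagerup--Schultz argument does genuine work, and it is not recoverable from the soft estimates you invoke.
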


Let us note that if $\M=B(H)$ with standard trace and if $x\in\M$ is
compact then
$$
\Lambda(n+1;x)=\prod_{k=0}^n \mu(k;x).
$$
We note first  some elementary properties of the
determinant function $\Lambda$ which follow directly from the
definition and properties of the singular value function (see ~\cite{FaKo1986} Lemma 2.5) together and the definition (\ref{propFK}).

\begin{enumerate}
\item If $0\le x\in L_{\log_+}(\tau)$ and $\alpha>0$, then
$x^\alpha\in L_{\log_+}(\tau)$ and
$
\Lambda(x^\alpha)=\Lambda(x)^\alpha.
$
\item If $x\in L_{\log_+}(\tau)$, then $x^*,\vert x\vert, x^*x\in L_{\log_+}(\tau)$ and 
$$\Lambda(x)=\Lambda(|x|)=\Lambda(x^*)=\Lambda(x^*x)^{1/2}.$$
\item If $x\in L_{\log_+}(\tau)$, then 
$$\Lambda(x^* x)=\Lambda(x x^*).$$
\item If $x,y\in L_{\log_+}(\tau)$ then
$$\Lambda(xy)=\Lambda(|x||y^*|).$$
\item If $x\in L_{\log_+}(\tau)$, then $uxv\in L_{\log_+}(\tau)$
for all unitary $u,v\in{\mathcal M}$, in which case
$$\Lambda(uxv)=\Lambda(x).$$ 
\end{enumerate}

For example, to indicate the proof of 4. above, it suffices to show that 
\begin{equation*}
\mu(xy)=\mu(\vert x\vert \vert y^*\vert).
\end{equation*}
To this end,
let $x=u\vert x\vert $ be the polar decomposition, and observe that 
\begin{align*}
\mu(xy)&=\mu(u\vert x\vert y)\leq \mu(\vert x\vert  y)\\
&=\mu(y^*\vert x\vert)\leq \mu(\vert y^*\vert\vert x\vert)=
\mu( \vert x\vert\vert y^*\vert)\\
&=\mu(u^*  x \vert y^*\vert )\leq \mu(x\vert y^*\vert)\\
&=\mu(\vert y^*\vert x^*)=\mu(y^*x^*)=\mu(xy),
\end{align*}
using repeatedly ~\cite{FaKo1986} Lemma 2.5 (vi).
\bigskip

The Theorem which follows is the principal result of this section and is a refinement of ~\cite{FaKo1986} Theorem 4.2 (ii). See also \cite{G}, Th\'eor\'eme 1, where it is stated for the case  that $a,b$ are compact operators in some Hilbert space.
\begin{thm}
\label{thmFK}{\bf (Weyl inequality)}
If $x,y\in L_{\log_+}(\tau)$, then 
\begin{equation}
\label{eqnWI}
\Lambda(t;xy)\leq \Lambda(t;x)\Lambda(t;y),\quad \forall t>0.
\end{equation}
\end{thm}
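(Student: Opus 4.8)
The plan is to reduce the general semifinite statement to the finite case already established by Haagerup and Schultz (Proposition~\ref{propHS}), by truncating to a corner of the algebra where the trace becomes finite and where the relevant quantities can be controlled in the limit. Fix $t>0$. First I would choose a spectral projection $p=e^{|y^*|}\big([\mu(t;y^*),\infty)\big)$, or rather a projection of the form $p_s = \Identity - e^{|x|}(\mu(s;x),\infty)$ type, so that $\tau(p)$ is finite and $\mu(\cdot\,;x)$, $\mu(\cdot\,;y)$ restricted below level $t$ are ``seen'' inside the compression. Concretely, pick projections $p,q\in \mathcal P(\mathcal M)$ with $\tau(p),\tau(q)<\infty$, $\tau(p)\geq t$, $\tau(q)\geq t$, such that $\mu(s;pxp)$ agrees with $\mu(s;x)$ for $s<\tau(p)$ and similarly for $q$ and $y$; this is the standard device of cutting $|x|$ and $|y|$ at a spectral level just below $\mu(t;\cdot)$. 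The truncated operators lie in a finite von Neumann algebra (namely $r\mathcal M r$ where $r=p\vee q$), hence in the corresponding $\mathcal M^\Delta$.

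The second step is to apply the multiplicativity of the Fuglede--Kadison--Haagerup--Schultz determinant in that finite corner. In $r\mathcal M r$ with trace $\tau|_{r\mathcal M r}$ one has $\Delta(x_0 y_0)=\Delta(x_0)\Delta(y_0)$ for the truncated operators $x_0,y_0$. Combining this with property~4 of $\Lambda$ ($\Lambda(xy)=\Lambda(|x|\,|y^*|)$) and the fact that, on $[0,t]$, $\mu(s;x_0 y_0)$ can be compared to $\mu(s;xy)$ via the submajorization/dilation inequalities (\ref{eqnDil}) — or better, via the Weyl-type inequality $\mu(xy)\prec\!\prec \mu(x)\mu(y)$ together with the equivalence in Proposition~\ref{propBr} — one gets the inequality $\Lambda(t;xy)\leq \Lambda(t;x_0)\Lambda(t;y_0)$. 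Here I would use that for $s<t\le \tau(p)\wedge\tau(q)$ the truncation does not change the singular values, so $\Lambda(t;x_0)=\Lambda(t;x)$ and $\Lambda(t;y_0)=\Lambda(t;y)$.

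Alternatively, and perhaps more cleanly, I would invoke the known Weyl majorization for singular value functions, $\int_0^t \log\mu(s;xy)\,ds \le \int_0^t \log\mu(s;x)\,ds + \int_0^t\log\mu(s;y)\,ds$, which in the $\tau$-compact / Lorentz-condition case is precisely \cite{FaKo1986} Theorem 4.2(ii), and then extend it off that subclass by an approximation argument: write $x=x'+x''$ with $x'\in\mathcal M$ (a spectral truncation $|x|e^{|x|}[0,N)$, say) and $x''$ small in singular values near infinity, do the same for $y$, apply the known inequality to the compact parts, and pass to the limit using continuity of $s\mapsto\int_0^t\log_+\mu(s;\cdot)$ guaranteed by membership in $L_{\log_+}(\tau)$ (this is exactly what (\ref{eqnFK2}) buys us). The main obstacle, and the step I'd spend the most care on, is this limiting argument: $\log\mu$ is unbounded below near the point where $\mu$ vanishes, so one must control the integrals $\int_0^t\log\mu(s;x_n)\,ds$ from below uniformly — monotone convergence handles the $\log_+$ part for free, but the $\log_-$ part requires knowing that the truncations $x_n$ decrease $\mu$ pointwise (so $\log\mu(x_n)\downarrow$, giving monotone convergence again) and that the limit is not spuriously $-\infty$, which is where the hypothesis $x,y\in L_{\log_+}(\tau)$ together with property~4 and the finite-trace multiplicativity must be combined to pin down the value $\int_0^t\log\mu(s;xy)\,ds$.
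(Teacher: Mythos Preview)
Your broad strategy---reduce to a finite corner and invoke the Haagerup--Schultz multiplicativity---matches the paper's. But the execution has a real gap. You propose compressing $x,y$ to operators $x_0,y_0$ living in a finite-trace corner $r\mathcal{M}r$ so that $\mu(s;x_0)=\mu(s;x)$ and $\mu(s;y_0)=\mu(s;y)$ for $s<t$, then applying $\Delta(x_0y_0)=\Delta(x_0)\Delta(y_0)$ there. Two things go wrong. First, the Haagerup--Schultz equality lives at the value $\tau(r)$, but your truncated operators have $\mu(s;x_0)=0$ for $s$ beyond roughly $t$, so $\Delta(x_0)=\Lambda(\tau(r);x_0)=0$ whenever $\tau(r)>t$; the identity becomes $0=0$ and yields nothing. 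Second, the comparison you need between $\Lambda(t;xy)$ and $\Lambda(\cdot\,;x_0y_0)$ goes the wrong way: compression can only \emph{decrease} singular values, so $\mu(x_0y_0)\leq\mu(xy)$, which does not bound $\Lambda(t;xy)$ from above. Your appeal to ``the Weyl-type inequality $\mu(xy)\prec\prec\mu(x)\mu(y)$ together with Proposition~\ref{propBr}'' does not rescue this: Proposition~\ref{propBr} carries log-submajorization \emph{to} ordinary submajorization, not the reverse.

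The paper's device is to move in the opposite direction. After normalizing so that $\mu(t;a)=\mu(t;b)=1$ (permissible once $\mu(t;a),\mu(t;b)>0$), one replaces $a,b$ by the \emph{larger} operators $a\vee\mathbf{1}$, $b\vee\mathbf{1}$. A direct operator-monotonicity computation gives $\mu(ab)\leq\mu\bigl((a\vee\mathbf{1})(b\vee\mathbf{1})\bigr)$, and on $(0,t)$ the singular values of $a\vee\mathbf{1}$ agree with those of $a$ while on $[t,\infty)$ they equal $1$, so $\log\mu=0$ there and $\Lambda(t';a\vee\mathbf{1})=\Lambda(t;a)$ for every $t'\geq t$. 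Because the spectral projections onto $(1,\infty)$ now have finite trace, one passes to a finite corner and applies Haagerup--Schultz to obtain an \emph{equality} (the paper's Lemma~\ref{lemFK1}) which then delivers the inequality for $a,b$ (Lemma~\ref{lemFK2}). The positivity hypothesis $\mu(t;\cdot)>0$ is removed at the end by the approximation $x\mapsto x+n^{-1}\mathbf{1}$ and monotone convergence. As for your alternative route via bounded truncations and the Fack--Kosaki result: the cut you wrote, $|x|e^{|x|}[0,N)$, shifts the singular values near $0$ and is not the right object; you would need $\min(|x|,N)$, and even then the passage to the limit in $\int_0^t\log\mu(x_Ny_N)$ requires exactly the delicate control of $\log_-$ that you flag but do not carry out.
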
 
 We shall need the following geometric characterisation of the singular value function:  If $x\in S(\tau)$ and if $t>0$, then
\begin{equation}
\label{eqnFK4}
\mu(t;x)=\inf\{\Vert x-z\Vert _\infty:\tau(s(\vert z\vert))\leq t\}. 
\end{equation}
See, for example, \cite{FaKo1986} Proposition 2.4.

We prove first the following result.
\begin{lem}
\label{lemFK1}If $a,b\in L_{\log_+}(\tau)$ are such that $a,b\geq {\bf 1}$, if $d(1;a),d(1;b)<\infty$ and if $r=e^a(1,\infty)\vee e^b(1,\infty)$, then 

$$
\Lambda(t;ab)= \Lambda(t;a)\Lambda(t;b),\quad \forall t\geq \tau(r).
$$
\end{lem}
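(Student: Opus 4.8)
The plan is to reduce the identity to the Haagerup--Schultz multiplicativity theorem (Proposition~\ref{propHS}), applied inside the \emph{finite} corner $\M_r:=r\M r$ equipped with the trace $\tau_r:=\tau|_{(r\M r)_+}$; here $\tau_r(r)=\tau(r)\le d(1;a)+d(1;b)<\infty$, since $r=e^a(1,\infty)\vee e^b(1,\infty)$.

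First I would establish a block decomposition. Because $a\ge{\bf 1}$, the spectrum of $a$ lies in $[1,\infty)$, so ${\bf 1}-e^a(1,\infty)=e^a(\{1\})$ and hence $a\bigl({\bf 1}-e^a(1,\infty)\bigr)={\bf 1}-e^a(1,\infty)$; since ${\bf 1}-r\le{\bf 1}-e^a(1,\infty)$, this gives $a({\bf 1}-r)=({\bf 1}-r)a={\bf 1}-r$, and likewise $b({\bf 1}-r)=({\bf 1}-r)b={\bf 1}-r$. Writing $a_r:=rar$ and $b_r:=rbr$, which are positive $\tau_r$-measurable operators affiliated with $\M_r$ with $a_r,b_r\ge r$, one then checks at the level of $\tau$-measurable operators that, relative to $\mathcal H=r\mathcal H\oplus({\bf 1}-r)\mathcal H$,
\begin{equation*}
a=a_r\oplus({\bf 1}-r),\qquad b=b_r\oplus({\bf 1}-r),\qquad ab=a_rb_r\oplus({\bf 1}-r).
\end{equation*}
Next, using $\sigma(a)\subseteq[1,\infty)$ one has $d(\lambda;a)=d_{\tau_r}(\lambda;a_r)$ for $\lambda\ge1$ and $d(\lambda;a)=\tau({\bf 1})$ for $0\le\lambda<1$, so that $\mu(s;a)=\mu_{\tau_r}(s;a_r)$ for $0\le s<\tau(r)$ and $\mu(s;a)=1$ for $\tau(r)\le s<\tau({\bf 1})$; the same identities hold for $b$, and also for $ab$ in place of $a$ (with $a_rb_r$ in place of $a_r$) once one notes that $|ab|^2=b_ra_r^2b_r\oplus({\bf 1}-r)$ with $b_ra_r^2b_r\ge b_r^2\ge r$ (since $a_r^2\ge r$ and $b_r$ is self-adjoint), whence $|ab|\ge{\bf 1}$. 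The identity $\mu(s;a)=\mu_{\tau_r}(s;a_r)$ on $[0,\tau(r))$ also shows $a_r,b_r\in L_{\log_+}(\tau_r)=\M_r^\Delta$, hence $a_rb_r\in\M_r^\Delta$ by Proposition~\ref{propFK}.

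Now fix $t\ge\tau(r)$. If $\tau({\bf 1})<\infty$ and $t>\tau({\bf 1})$, then each of $\mu(\cdot;a),\mu(\cdot;b),\mu(\cdot;ab)$ vanishes on a set of positive measure, so $\Lambda(t;a)=\Lambda(t;b)=\Lambda(t;ab)=0$ and the identity is trivial. Otherwise $\tau(r)\le t\le\tau({\bf 1})$, and since the relevant singular value functions equal $1$ on $[\tau(r),t)$,
\begin{equation*}
\Lambda(t;a)=\exp\!\Bigl(\int_0^{\tau(r)}\log\mu_{\tau_r}(s;a_r)\,ds\Bigr)=\Delta_{\M_r}(a_r),
\end{equation*}
and likewise $\Lambda(t;b)=\Delta_{\M_r}(b_r)$ and $\Lambda(t;ab)=\Delta_{\M_r}(a_rb_r)$, where $\Delta_{\M_r}$ denotes the Haagerup--Schultz determinant of the finite algebra $(\M_r,\tau_r)$. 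Applying Proposition~\ref{propHS} in $(\M_r,\tau_r)$ gives $\Delta_{\M_r}(a_rb_r)=\Delta_{\M_r}(a_r)\Delta_{\M_r}(b_r)$, i.e.\ $\Lambda(t;ab)=\Lambda(t;a)\Lambda(t;b)$, as required.

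I expect the main obstacle to be the reduction to the corner: justifying the block-diagonal decompositions of $a$, $b$ and $ab$ for (possibly unbounded) $\tau$-measurable operators, and matching $\mu(\cdot;a),\mu(\cdot;b),\mu(\cdot;ab)$ with the singular value functions of $a_r,b_r,a_rb_r$ computed inside $\M_r$ — in particular that all three equal $1$ on $[\tau(r),\tau({\bf 1}))$, which for $ab$ rests on the positivity estimate $|a_rb_r|\ge r$. Once these facts are in place, the assertion is exactly Proposition~\ref{propHS} transported into the finite corner $\M_r$.
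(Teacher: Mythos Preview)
Your proposal is correct and follows essentially the same strategy as the paper: reduce to the finite corner $r\mathcal{M}r$ and invoke the Haagerup--Schultz multiplicativity (Proposition~\ref{propHS}) there, after showing that $a,b,ab$ split block-diagonally along $r$ and that all three singular value functions equal $1$ on $[\tau(r),\tau({\bf 1}))$. The paper carries out the same reduction slightly differently---it keeps all singular values computed in $\mathcal{M}$, proves $\mu(t;a)=\mu(t;rar)$ and $\mu(t;ab)=\mu(t;rar\cdot rbr)$ on $(0,\tau(r))$ via \cite{DDP1989a} Lemma~2.4 and an explicit square-root computation $|ab|=|rabr|+({\bf 1}-r)$, and then applies Proposition~\ref{propHS} implicitly in the corner---whereas you phrase everything via the block decomposition $a=a_r\oplus({\bf 1}-r)$ and the distribution-function identity $d(\lambda;a)=d_{\tau_r}(\lambda;a_r)$ for $\lambda\ge 1$; but the underlying idea and the key input (Proposition~\ref{propHS}) are identical.
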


\begin{proof}
We set 
\begin{equation*}
p=\chi_{(1,\infty)}(a)=e^a(1,\infty),\quad q=\chi_{(1,\infty)}(b)=e^b(1,\infty),\quad r=p\vee q,
\end{equation*}
and note that 
\begin{equation*}
\tau(r)\leq \tau(p)+\tau(q)=d(1;a)+d(1;b)<\infty.
\end{equation*}
Since $a,b\geq 1$, it follows that
\begin{equation*}
{\bf 1}-p=\chi_{[0,1]}(a)=\chi_{\{1\}}(a),\quad {\bf 1}-q=\chi_{\{1\}}(b)
\end{equation*} 
and so 
\begin{equation*}
{\bf 1}-r=({\bf 1}-p)\wedge ({\bf 1}-q)=\chi_{\{1\}}(a)\wedge \chi_{\{1\}}(b).
\end{equation*}
Consequently,
\begin{equation}
\label{eqnFKa}
a({\bf 1}-r)=a\chi_{\{1\}}(a)({\bf 1}-r)=\chi_{\{1\}}(a)({\bf 1}-r)
=({\bf 1}-r)
\end{equation}
and
\begin{equation}
\label{eqnFKb}
({\bf 1}-r)a=(a({\bf 1}-r))^*=({\bf 1}-r).
\end{equation}
Similar reasoning applies with $a$ replaced by $b$ so that also
\begin{equation}
\label{eqnFKc}
({\bf 1}-r)b=(b({\bf 1}-r))^*=({\bf 1}-r)
\end{equation}
and it follows, in particular, that $a,b$ commute with $r$. 

Using (\ref{eqnFKa}), (\ref{eqnFKc}), note further that
\begin{equation}
\label{eqnFKe}
ab=rab+({\bf 1}-r)ab=rab+({\bf 1}-r). 
\end{equation}
It will now be shown that 
\begin{equation}
\label{eqnFKcc}
\mu(t;a)=\mu(t;rar),\quad \mu(t;b)=\mu(t;rbr),\quad t\in (0,\tau(r)).
\end{equation}
Without loss of generality, it may be assumed that $\tau(r)=1$. 
By  \cite{DDP1989a} Lemma 2.4 (i), it follows that 
\begin{align*}
\mu(t;a)&=\mu(t;pap)=\mu(t;p\cdot rar\cdot p)\\
&\leq \mu(t;rar)\leq \mu(t;a),\quad t\in (0,\tau(p)),
\end{align*}
so that
\begin{equation*}
\mu(t;rar)
=\mu(t;a),\quad t\in (0,\tau(p)).
\end{equation*}
Similarly,
\begin{equation*}
\mu(t;rbr)
=\mu(t;b),\quad t\in (0,\tau(q)).
\end{equation*}
If $\tau(p)=1$, then the first equality in (\ref{eqnFKcc}) follows immediately, and if  $\tau(q)=1$, then the second equality in (\ref{eqnFKcc}) holds. Consequently, it may be assumed that $\tau(p)<1$ and $\tau(q)<1$. 

Suppose now that $\tau(p)\leq t<1$. Observing that 
$\tau(s(pa))\leq \tau(p)\leq t$, it follows from (\ref{eqnFK4}) that 
\begin{equation*}
\mu(t;a)\leq \Vert a-pa\Vert _\infty=\Vert a\chi_{[0,1]}(a)\Vert _\infty \leq 1.
\end{equation*}
Since $a\geq {\bf 1}$, it follows that $\mu(t;a)\geq 1$ for all $t>0$ and it now follows that 

\begin{equation}
\label{eqnFKkka}
\mu(t;a)=1\quad \forall t\geq \tau(p). 
\end{equation}
The same argument shows that

\begin{equation}
\label{eqnFKkkb}
\mu(t;b)=1\quad \forall t\geq \tau(q). 
\end{equation}
In particular, $\mu(t;a)=1$ for all $\tau(p)\leq t<1$ and 
$\mu(t;b)=1$ for all $\tau (q)\leq t<1$.  

At the same time, since $a$ commutes with $r$, it follows that  $a\geq rar\geq r$ since $a\geq 1$. This implies that 
\begin{equation*}\mu(a)\geq \mu(rar)\geq \mu(r)=\chi_{[0,\tau(r))}.
\end{equation*}
 Consequently,
\begin{equation*}
1=\mu(t;a)\geq \mu(t;rar)\geq 1,\quad \tau(p)\leq t<\tau(r)=1
\end{equation*}
so that 
\begin{equation*}
\mu(t;rar)=1=\mu(t;a), \quad \tau(p)\leq t<1.
\end{equation*}
Similarly,
\begin{equation*}
\mu(t;rbr)=1=\mu(t;a), \quad \tau(q)\leq t<1.
\end{equation*}
This suffices to complete the proof of the equalities in (\ref{eqnFKcc})

It will now be shown that 
\begin{equation}
\label {eqnFKf}
\mu(t;ab)=\mu(t;rabr)=\mu(t;rar\cdot rbr),\quad 0<t<1.
\end{equation}
Note first that
\begin{align}
\begin{split}
\label{eqnFKg}
\mu(rabr)=&\mu(rar\cdot rbr)=\mu^{\frac{1}{2}}(rbr(rar)^2rbr)
\\
&\geq \mu^{\frac{1}{2}}(rbr\cdot r\cdot rbr)=\mu(rbr)\geq \mu(r)=\chi_{[0,1)}
\end{split}
\end{align}

Observe next that $r$ commutes with $\vert rabr\vert $, since it is clear that $r$ commutes with $\vert rabr\vert^2=rb^*a^*rabr=rba^2br$. From this it follows that
\begin{equation}
\label{eqnFKgg}
r\vert rabr\vert r =\vert rabr\vert.
\end{equation}
Indeed, 
\begin{align*}
(r\vert rabr\vert r)^2
&=r\vert rabr\vert r\vert rabr\vert r
=r\vert rabr\vert ^2r\\
&=rb^*a^*rabr=rb^*a^*abr=\vert rabr\vert ^2
\end{align*}
and the equality (\ref {eqnFKgg}) follows by passing to square roots. We may now show that 
\begin{equation}
\label{eqnFKee}
\vert ab\vert =r\vert rabr\vert r +({\bf 1}-r)
=\vert rabr\vert +({\bf 1}-r).
\end{equation}
In fact, using (\ref{eqnFKe}) and (\ref{eqnFKgg}), observe that
\begin{align*}
\vert ab\vert ^2
&=(ab)^*(ab)=rb^*a^*rabr +({\bf 1}-r)\\
&=\vert rabr\vert ^2 +({\bf 1}-r)
=(r\vert rabr\vert r)^2+({\bf 1}-r)\\
&=(r\vert rabr\vert r +({\bf 1}-r))^2.
\end{align*}
Passing to square roots and again using (\ref{eqnFKgg}) now  yields
\begin{align*}
\vert ab\vert 
=r\vert rabr\vert r +({\bf 1}-r)
=\vert rabr\vert+({\bf 1}-r).
\end{align*}

Observe now, that if $s>0$, then, using (\ref{eqnFKee})
\begin{align*}
d(s;ab)&=d(s;\vert rabr\vert+({\bf 1}-r))\\
&=\begin{cases}
d(s;\vert rabr\vert),&\ s\geq 1,\\
d(s;\vert rabr\vert)+d(s;({\bf 1}-r)),&\ 0<s<1.
\end{cases}\\
&=
\begin{cases}
d(s;\vert rabr\vert),&\ s\geq 1,\\
\infty,&\ 0<s<1.
\end{cases}
\end{align*}

It follows readily that
\begin{align*}
\mu(t;ab)&=\inf\{s\geq 0:d(s;ab)\leq t\}\\
&=\mu(rabr)\chi_{[0,1)}+\chi_{[1,\infty)},
\end{align*}
and this yields (\ref{eqnFKf}). In particular, it follows also that
\begin{equation}
\label{eqnFKk}
\mu(t;ab)=1,\quad t\geq 1.
\end{equation}

Finally, if $t\geq \tau(r)=1$, then
\begin{align*}
\Lambda(t;ab)
&=\exp\int_0^t\log\mu(s;ab)ds\overset{\ref{eqnFKk}}{=}
\exp\int_0^1\log\mu(s;ab)ds\\
&
=\overset{(\ref{eqnFKf})}{=}\exp\int_0^{1}\log\mu(a;rabr)ds\\
&
\overset{Prop\ref{propHS}}{=}
\left(\exp\int_0^{1}\log\mu(s;rar)ds\right)\left(\exp\int_0^{1}\log\mu(s;rbr)ds\right)\\
&\overset{(\ref{eqnFKcc})}{=}
\left(\exp\int_0^{1}\log\mu(s;a)dt\right)\left(\exp\int_0^{1}\log\mu(s;b)ds
\right)\\
&\overset{(\ref{eqnFKkka}),(\ref{eqnFKkkb})}{=}
\left(\exp\int_0^{t}\log\mu(s;a)dt\right)\left(\exp\int_0^{t}\log\mu(s;b)ds
\right)
\end{align*}
and this suffices to complete the proof of the Lemma.

\end{proof}

We may now prove the following special case of Theorem~\ref{thmFK}.
\begin{lem}
\label{lemFK2}
If $a,b\in L_{\log_+}(\tau)$ satisfy $\mu(s;a)>0,\mu(s;b)>0$ for every $s>0$
then
$$
\Lambda(t;ab)\leq \Lambda(t;a)\Lambda(t;b),\quad \forall t>0.
$$
\end{lem}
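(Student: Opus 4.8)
The plan is to reduce the general case to Lemma~\ref{lemFK1} by an approximation/truncation argument. Given $a,b\in L_{\log_+}(\tau)$ with $\mu(s;a),\mu(s;b)>0$ for all $s>0$, I would first replace $a$ by $|a|$ and $b$ by $|b^*|$; by property~4 of the determinant function $\Lambda(t;ab)=\Lambda(t;|a||b^*|)$, and $\Lambda(t;|a|)=\Lambda(t;a)$, $\Lambda(t;|b^*|)=\Lambda(t;b^*)=\Lambda(t;b)$, so it suffices to prove the inequality for positive $a,b$. Next, fix $t_0>0$; the goal is to establish $\Lambda(t_0;ab)\le\Lambda(t_0;a)\Lambda(t_0;b)$. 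The idea is to write $a=a_++a_0$ where $a_+=ae^a(\lambda,\infty)$ is the ``large part'' and then push the small spectral part down to $\mathbf{1}$. Concretely, for $\lambda\in(0,1)$ set $a_\lambda=\max\{a,\mathbf 1\}$ via functional calculus, i.e. $a_\lambda = a\vee \mathbf 1$ (replace the spectrum below $1$ by $1$); then $a_\lambda\ge\mathbf 1$, $a_\lambda\ge a$ on the positive part, and $d(1;a_\lambda)=\tau(e^a(1,\infty))$, which may be infinite in general. To force finiteness, instead take, for each $n$, the operator $a^{(n)}:=a\vee(1+\tfrac1n)\mathbf1$ truncated appropriately — but the cleanest route is: since $\mu(s;a)>0$ for all $s$, $a$ is injective with dense range, and one can choose a scalar normalization. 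Let me describe the cleaner version below.

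Here is the cleaner approach. Fix $t>0$ and choose $\lambda>0$ small enough that $\mu(t;a)>\lambda$ and $\mu(t;b)>\lambda$ (possible since these values are positive and $\mu$ is right-continuous decreasing; actually one needs $\lambda<\min\{\mu(t;a),\mu(t;b)\}$). Rescale: replace $a$ by $a/\lambda_1$ and $b$ by $b/\lambda_2$ where $\lambda_1,\lambda_2$ are chosen so that after rescaling $\mu(t;a/\lambda_1)\ge1$ and $d(1;a/\lambda_1)=d(\lambda_1;a)<\infty$ — the latter holds for $\lambda_1$ large enough because $a$ is $\tau$-measurable. Since $\Lambda(t;\alpha x)=\alpha^t\Lambda(t;x)$ for scalars $\alpha>0$, rescaling changes both sides of \eqref{eqnWI} by the same factor and so is harmless. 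Thus, replacing $a,b$ by suitable positive scalar multiples, we may assume $d(1;a)<\infty$, $d(1;b)<\infty$, and $\mu(t;a),\mu(t;b)\ge1$. Now set $\tilde a=a\vee\mathbf1$ and $\tilde b=b\vee\mathbf1$ (functional calculus, raising the spectrum below $1$ up to $1$). Then $\tilde a,\tilde b\ge\mathbf1$, $d(1;\tilde a)=d(1;a)<\infty$, $d(1;\tilde b)<\infty$, and crucially $\mu(s;\tilde a)=\mu(s;a)$ for $s\le \tau(e^a(1,\infty))$ while $\mu(s;\tilde a)=1$ thereafter; by the choice of $\lambda$ we have $\mu(s;a)\ge1$ on $[0,t]$ provided $\tau(e^a(1,\infty))\ge t$, but if $\tau(e^a(1,\infty))<t$ we would have $\mu(t;a)=\mu(t;\tilde a)$ only if... — this needs care. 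The key identity to check is $\int_0^t\log\mu(s;a)\,ds=\int_0^t\log\mu(s;\tilde a)\,ds$ whenever $\mu(t;a)\ge1$, which holds because on $[0,t]$ either $\mu(s;a)\ge1$ so $\tilde a$ doesn't change it, giving equality of the integrands a.e.

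Once the reduction is complete we are in the situation of Lemma~\ref{lemFK1}: $\tilde a,\tilde b\ge\mathbf1$ with $d(1;\tilde a),d(1;\tilde b)<\infty$, $r=e^{\tilde a}(1,\infty)\vee e^{\tilde b}(1,\infty)$ has finite trace, and for $t\ge\tau(r)$ we get the exact equality $\Lambda(t;\tilde a\tilde b)=\Lambda(t;\tilde a)\Lambda(t;\tilde b)$. For $0<t<\tau(r)$ we invoke Proposition~\ref{propHS} directly (since $\mu(s;\tilde a)=\mu(s;r\tilde ar)$ etc. on $(0,\tau(r))$ and the finite-trace multiplicativity applies on the corner $r\mathcal Mr$), again yielding equality. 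So in fact $\Lambda(t;\tilde a\tilde b)=\Lambda(t;\tilde a)\Lambda(t;\tilde b)$ for all $t>0$. It remains to compare $\tilde a\tilde b$ with $ab$: since $a\le\tilde a$ and $b\le\tilde b$ as positive operators, one has $\mu(s;a)\le\mu(s;\tilde a)$ and $\mu(s;b)\le\mu(s;\tilde b)$ for all $s$, hence $\Lambda(t;a)\le\Lambda(t;\tilde a)$ and $\Lambda(t;b)\le\Lambda(t;\tilde b)$; but we need the opposite-direction comparison $\Lambda(t;ab)\le\Lambda(t;\tilde a\tilde b)$. This is where the Weyl inequality $\mu(ab)\prec\prec_{\log}$-type estimate is subtle: it is NOT true that $\mu(s;ab)\le\mu(s;\tilde a\tilde b)$ pointwise. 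Instead I would use the logarithmic submajorization $\mu(ab)\prec\prec_{\log}\mu(a)\mu(b)$ — the very content of the theorem for the ``nice'' case, or rather: the correct move is to bound $\Lambda(t;ab)\le\Lambda(t;\tilde a\tilde b)$ via $|ab|\le$ something, using that multiplication by the positive ``floor'' only increases singular values; since $|ab|^2=b a^2 b\le \tilde b\,\tilde a^2\,\tilde b=|\tilde a\tilde b|^2$ as positive operators (because $0\le a\le\tilde a\Rightarrow a^2\le\tilde a^2$, then conjugating by $b$ preserves order and $0\le b\le\tilde b$ lets us enlarge further), we get $\mu(s;ab)\le\mu(s;\tilde a\tilde b)$ pointwise after all, hence $\Lambda(t;ab)\le\Lambda(t;\tilde a\tilde b)=\Lambda(t;\tilde a)\Lambda(t;\tilde b)$. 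Finally we must get back down to $\Lambda(t;a)\Lambda(t;b)$ on the right, but after the scalar rescaling and the ``floor'' we only have $\Lambda(t;\tilde a)\ge\Lambda(t;a)$, the wrong direction!

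The resolution — and this is the step I expect to be the main obstacle — is that the floor operation $a\mapsto\tilde a$ must be arranged to change $\Lambda(t;\cdot)$ by a controlled factor that cancels, OR one must avoid it and instead approximate $a$ from below by $a\wedge N\mathbf1$ truncations combined with the scaling so that $\mu$ stays bounded below on $[0,t]$ automatically. The honest fix: do not floor; rather, given that $\mu(t;a),\mu(t;b)>0$, choose $\lambda\in(0,\min(\mu(t;a),\mu(t;b)))$ and work with $a_\lambda:=ae^a[\lambda,\infty)+\lambda e^a[0,\lambda)$, which equals $a$ on the part of the spectrum $\ge\lambda$ and equals $\lambda$ below; then $a_\lambda\ge\lambda\mathbf1$, $d(\lambda;a_\lambda)=d(\lambda;a)<\infty$ since $a$ is $\tau$-measurable, and $\mu(s;a_\lambda)=\mu(s;a)$ for $s$ in the range where $\mu(s;a)\ge\lambda$, in particular for all $s\le t$ by the choice of $\lambda$; hence $\Lambda(t;a_\lambda)=\Lambda(t;a)$ exactly. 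Do the same for $b$. Then $a_\lambda/\lambda\ge\mathbf1$ with $d(1;a_\lambda/\lambda)<\infty$, $\Lambda(t;a_\lambda/\lambda)=\lambda^{-t}\Lambda(t;a)$; apply Lemma~\ref{lemFK1} to $a_\lambda/\lambda$ and $b_\lambda/\lambda$ to get $\Lambda(t;(a_\lambda b_\lambda)/\lambda^2)=\lambda^{-2t}\Lambda(t;a)\Lambda(t;b)$; and finally $|ab|^2=ba^2b\le b a_\lambda^2 b\le b_\lambda a_\lambda^2 b_\lambda=|a_\lambda b_\lambda|^2$ gives $\mu(s;ab)\le\mu(s;a_\lambda b_\lambda)$, whence $\Lambda(t;ab)\le\Lambda(t;a_\lambda b_\lambda)=\Lambda(t;a)\Lambda(t;b)$. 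That closes the argument; the delicate points are (i) verifying $\Lambda(t;a_\lambda)=\Lambda(t;a)$, which rests on $\mu(\cdot;a_\lambda)$ agreeing with $\mu(\cdot;a)$ on $[0,t]$ — a direct computation from the definition of $d(s;\cdot)$ — and (ii) the positive-operator inequalities $a^2\le a_\lambda^2$ and $b c b\le b_\lambda c b_\lambda$ for $0\le c$, the second of which requires writing $b_\lambda c b_\lambda - bcb$ as a sum of positive terms using $b\le b_\lambda$, and is where I would be most careful.
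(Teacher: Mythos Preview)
Your overall strategy---reduce to positive $a,b$, floor at a positive level, then invoke Lemma~\ref{lemFK1}---is exactly the paper's approach, but the execution contains three genuine errors. First, the claim ``$d(\lambda;a)<\infty$ since $a$ is $\tau$-measurable'' is false: $\tau$-measurability only gives $d(s;a)<\infty$ for \emph{sufficiently large} $s$, not for arbitrary $\lambda>0$. The paper avoids this by normalizing so that $\mu(t;a)=\mu(t;b)=1$ (possible since both are strictly positive) and flooring at level $1$; from $\mu(t;a)=1$ one deduces $d(1;a)\leq t<\infty$ automatically. Second, Lemma~\ref{lemFK1} only gives equality for $t\geq\tau(r)$, and Proposition~\ref{propHS} only produces the single value $\Lambda(\tau(r);\cdot)$, not the whole determinant function; so your claimed equality for all $t>0$ is unjustified. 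The paper handles the case $t<\tau(r)$ by a trick your setup cannot imitate: since $x=a\vee\mathbf 1$, $y=b\vee\mathbf 1$ satisfy $\mu(xy)\geq1$, one has $\int_0^t\log\mu(xy)\,dm\leq\int_0^{\tau(r)}\log\mu(xy)\,dm$; Lemma~\ref{lemFK1} applies at $\tau(r)$; and the normalization $\mu(t;a)=1$ forces $\mu(s;x)=1$ for $s\geq t$, collapsing the right-hand integrals back to $[0,t]$. With your choice $\lambda<\mu(t;a)$ this collapse fails, since $\mu(s;a_\lambda)$ may well exceed $\lambda$ on an interval to the right of $t$.

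Third---and this is independent of the first two---the operator inequality $b\,c\,b\leq b_\lambda\,c\,b_\lambda$ for $0\leq b\leq b_\lambda$ and $c\geq0$ is \emph{false} in general: with $b=e_{22}$, $b_\lambda=\lambda e_{11}+e_{22}$ and $c$ the $2\times2$ matrix of all ones, the difference $b_\lambda c b_\lambda-bcb$ has determinant $-\lambda^2<0$. Your caution about step~(ii) was warranted; writing the difference as a sum of positive terms does not work. The desired conclusion $\mu(ab)\leq\mu(a_\lambda b_\lambda)$ \emph{is} nevertheless true, but must be argued at the level of singular values rather than operators, exactly as the paper does:
\begin{equation*}
\mu(ab)^2=\mu(ba^2b)\leq\mu(ba_\lambda^2b)=\mu(a_\lambda b)^2=\mu(a_\lambda b^2 a_\lambda)\leq\mu(a_\lambda b_\lambda^2 a_\lambda)=\mu(a_\lambda b_\lambda)^2,
\end{equation*}
using only that $0\leq A\leq B$ implies $\mu(A)\leq\mu(B)$ together with $a^2\leq a_\lambda^2$ and $b^2\leq b_\lambda^2$ (these last hold because each pair commutes).
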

\begin{proof}
Without loss of generality, it may be assumed that $a,b\geq 0$. Let $t>0$ be given. It will be shown that 
\begin{equation}
\label{eqnFKaa}
\int_0^t\log \mu(s;ab)ds\leq \int_0^t\log \mu(s;a)ds+\int_0^t\log \mu(s;b)ds
\end{equation}
Since $\mu(t;a)>0$ and $\mu(t;b)>0$, it may be assumed further, by homogeneity, that $\mu(t;a)=1=\mu(t;b)$. Now set 
\begin{equation}
\label{eqnFKaaa}
x=a\vee {\bf 1},\quad y=b\vee {\bf 1}
\end{equation}
and observe that 
\begin{equation}
\label{eqnFKac}
\mu(s;x)=\mu(s;a)\vee 1=
\begin{cases}
\mu(s;a),&\ s\in (0,t),\\
1,&\ s\in(t,\infty).
\end{cases}
\end{equation}
and
\begin{equation}
\label{eqnFKad}
\mu(s;y)=\mu(s;b)\vee 1=
\begin{cases}
\mu(s;b),&\ s\in (0,t),\\
1,&\ s\in(t,\infty).
\end{cases}
\end{equation}
Now observe that
\begin{equation}
\label{eqnKaab}
\mu(ab)\leq \mu(xy)\quad {\rm and}\quad \mu(xy)\geq 1.
\end{equation}
To see the first inequality, using the fact that $a\geq 0,b\geq 0$,
observe that
\begin{align*}
\mu(ab)
&=\mu(ba^2b)^{\frac{1}{2}}\leq \mu(b(a\vee {\bf 1})^2b)^{\frac{1}{2}}\\
&=\mu((a\vee {\bf 1})b)=\mu(((a\vee {\bf 1})b)^*)=\mu(b(a\vee {\bf 1}))\\
&\leq \mu((b\vee {\bf 1})(a\vee {\bf 1}))
=\mu(((b\vee {\bf 1})(a\vee {\bf 1}))^*)=\mu((a\vee {\bf 1})(b\vee {\bf 1})).
\end{align*}
The second inequality follows by observing that $x^2\geq {\bf 1},\ y^2\geq {\bf 1}$ and
\begin{align*}
\mu(xy)&=\mu(yx^2y)^{\frac{1}{2}}\geq \mu(y^2)^{\frac{1}{2}}\geq 1.
\end{align*}

If $t\leq w=\tau(e^x(1,\infty)\vee e^y(1,\infty))$,  observe that
\begin{align*}
\int_0^t\log \mu(s;ab)ds
&\overset{(\ref{eqnKaab})}{\leq} \int_0^t\log \mu(s;xy)ds 
\overset{}{\leq} \int_0^w\log\mu(s;xy)ds\\
&
\overset{Lemma \ref{lemFK1}}{=}\int_0^w\log\mu(s;x)ds
+\int_0^w\log\mu(s;y)ds\\
&
\overset{(\ref{eqnFKac}),(\ref{eqnFKad})}{=}\int_0^t\log\mu(s;x)ds
+\int_0^t\log\mu(s;y)ds\\
&=\int_0^t\log\mu(s;a)ds
+\int_0^t\log\mu(s;a)ds.
\end{align*}

\end{proof}

Similarly, if $t\geq \tau(e^x(1,\infty)\vee e^y(1,\infty))$,

\begin{align*}
\int_0^t\log \mu(s;ab)ds
&\overset{(\ref{eqnKaab})}{\leq} \int_0^t\log \mu(s;xy)ds 
\\
&
\overset{Lemma \ref{lemFK1}}{=}\int_0^t\log\mu(s;x)ds
+\int_0^t\log\mu(s;y)ds\\
&=\int_0^t\log\mu(s;a)ds
+\int_0^t\log\mu(s;a)ds.
\end{align*}
{\bf Proof of Theorem~\ref{thmFK}}: Assume that $x,y\in L_{\log_+}(\tau)$. Without loss of generality, it may be assumed that $x,y\geq 0$. Let $t>0$. We need to establish the equality
\begin{equation}
\label{eqnFKz}
\int_0^t\log\mu(s;xy)ds
\leq \int_0^t\log\mu(s;x)ds+\int_0^t\log\mu(s;y)ds.
\end{equation}
For each $n=1,2,\dots$, set $x_n=x+\frac{1}{n}{\bf 1},\ y_n=
y+\frac{1}{n}{\bf 1}$ and note that
\begin{align*}
\mu(xy)&=\mu(yx^2y)^{\frac{1}{2}}\leq \mu(yx_n^2y)^{\frac{1}{2}}\\
&=\mu(x_ny)=\mu((x_ny)^*)=\mu(yx_n)=\mu(x_ny^2x_n)^{\frac{1}{2}}\\
&\leq \mu(x_ny_n^2x_n)^{\frac{1}{2}}=\mu(y_nx_n)=\mu(x_ny_n),\quad n\geq 1.
\end{align*}
By Lemma~\ref{lemFK2}, we now  have that
\begin{equation*}
\int_0^t\log \mu(s;xy)ds
\leq \int_0^t\log \mu(s;x_ny_n)ds
\leq \int_0^t\log \mu(s;x_n)ds+
\int_0^t\log \mu(s;y_n)ds.
\end{equation*}
By the monotone convergence theorem,
\begin{equation*}
\int_0^t\log \mu(s;x_n)ds\downarrow_n\int_0^t\log \mu(s;x)ds,\quad
\int_0^t\log \mu(s;y_n)ds\downarrow_n\int_0^t\log \mu(s;y)ds
\end{equation*}
and the inequality~(\ref{eqnFKz}) follows directly.

\section{ H\"older type inequalities via logarithmic submajorization}

The principal result of this section (Proposition~\ref{propKIp4}) is a very general H\"older inequality which goes back to Kosaki~\cite{Ko1998} in the special case of trace ideals. Our approach is based on logarithmic submajorization and while we follow the ideas of ~\cite{Ko1998}, the methods given there fail to apply in the present setting. 

We shall need the following submajorization inequalities
~\cite{Ko1992},~\cite{Ha}.

\begin{prop}
\label{propalt1}
Let $\varphi$ be a continuous increasing function on $[0,\infty)$ such that 
$\varphi(0)=0$ and has the property that $\varphi\circ\exp(\cdot)$ is convex.  If 
$0\leq a,b\in S(\tau)$,  then
\begin{equation}
\label{eqnKo5}
\int_0^t \varphi(\mu(\vert ab\vert ^r)dm
\leq \int _0^t\varphi(\mu(a^rb^r))dm,\quad r\geq 1\quad t>0.
\end{equation}
and
\begin{equation}
\label{eqnKo6}
\int_0^t \varphi(\mu(\vert ab\vert ^r)dm
\geq \int _0^t\varphi(\mu(a^rb^r))dm,\quad 0<r\leq 1\quad t>0.
\end{equation}

\end{prop}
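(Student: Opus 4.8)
The plan is to reduce Proposition~\ref{propalt1} to the corresponding logarithmic
submajorization statement and then invoke Proposition~\ref{propBr}. The key
observation is that, by Remark~\ref{rmkLS} (more precisely, by the implication
(i)$\Rightarrow$(iii) in Proposition~\ref{propBr} applied with the given $\varphi$),
it suffices to prove the \emph{logarithmic} submajorization inequalities
\begin{equation*}
\int_0^t\log\mu(s;\vert ab\vert^r)\,ds\leq\int_0^t\log\mu(s;a^rb^r)\,ds,\quad r\geq 1,\ t>0,
\end{equation*}
and the reverse for $0<r\leq 1$; of course one must first check that the operators
$a^rb^r$ and $\vert ab\vert^r$ lie in $L_{\log_+}(\tau)$ so that
Proposition~\ref{propBr} applies. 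The latter is immediate when $a,b\in\M$; for the
general case one truncates (see below), so the bulk of the work is the logarithmic
estimate, which is where the Fuglede--Kadison determinant enters.

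For the logarithmic inequality, I would first treat bounded operators
$0\leq a,b\in\M$. Using the determinant function $\Lambda$, the target inequality for
$r\geq 1$ reads $\Lambda(t;\vert ab\vert^r)\leq\Lambda(t;a^rb^r)$, equivalently, after
raising to the power $1/r$ and using property (1) of $\Lambda$ in Section~4,
$\Lambda(t;\vert ab\vert)\leq\Lambda(t;(a^rb^r)^{1/r})$. Now the classical
Araki--Lieb--Thirring inequality in the form due to Kosaki~\cite{Ko1992} (valid for
$\tau$-measurable operators) gives $\mu(s;\vert ab\vert^r)\prec_{\log}\mu(s;a^rb^r)$,
i.e. the partial-integral inequality for $\log$ over $[0,t]$; this is precisely the
determinant-based statement one wants. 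Concretely: Kosaki proves, for $0\leq a,b$ and
$r\geq 1$, that $(ab^2a)^{r}\prec_{\log}a^rb^{2r}a^r$ in the sense of the
Fuglede--Kadison determinant (this is \cite{Ko1992}; the semifinite strengthening is
\cite{Ha}), and translating via $\mu(\vert ab\vert)=\mu(ba^2b)^{1/2}$ and
$\mu(a^rb^r)=\mu(b^ra^{2r}b^r)^{1/2}$ yields the claim. The case $0<r\leq 1$ follows
from the $r\geq1$ case applied to the exponents $1/r\geq1$ together with the
substitution $a\mapsto a^r$, $b\mapsto b^r$, reversing the inequality.

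For the unbounded case one approximates: put $a_n=ae^{\vert a\vert}[0,n]$,
$b_n=be^{\vert b\vert}[0,n]\in\M$ (or more simply $a\wedge n\Identity$,
$b\wedge n\Identity$), apply the bounded case, and pass to the limit using that
$\mu(s;a_n)\uparrow\mu(s;a)$ for each $s>0$, hence by monotone convergence
$\int_0^t\log\mu(s;\cdot)$ converges appropriately; one also uses
$\mu(\vert a_nb_n\vert)\to\mu(\vert ab\vert)$ and $\mu(a_n^rb_n^r)\to\mu(a^rb^r)$ in
measure together with the dilation estimates \eqref{eqnDil} to dominate and justify
the interchange. Finally, having the logarithmic submajorization in hand, apply
Proposition~\ref{propBr}(i)$\Rightarrow$(iii) with the given $\varphi$ to conclude
\eqref{eqnKo5} and, symmetrically, \eqref{eqnKo6}.

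\textbf{Main obstacle.} The delicate point is not the $\varphi$-reduction (that is a
clean application of Proposition~\ref{propBr}) but establishing the logarithmic
Araki--Lieb--Thirring inequality \emph{in the semifinite setting with the $\Lambda$
determinant} and then controlling the truncation limits: one must ensure that all
quantities stay in $L_{\log_+}(\tau)$ along the approximation and that
$\int_0^t\log\mu(s;a_n^rb_n^r)\,ds\to\int_0^t\log\mu(s;a^rb^r)\,ds$, which requires a
uniform integrable lower bound for the (possibly very negative) integrands near
$s=0$ — this is exactly where membership in $L_{\log_+}(\tau)$ and the
dilation inequalities \eqref{eqnDil} do the work. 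I would expect the cited results
\cite{Ko1992}, \cite{Ha} to supply the bounded-operator logarithmic inequality
directly, so the real content of the proof here is the reduction and the limiting
argument.
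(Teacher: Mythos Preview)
The paper does not give its own proof of Proposition~\ref{propalt1}: it is quoted as a known result, attributed to Kosaki~\cite{Ko1992} for $\tau$-compact $a,b$ and to Han~\cite{Ha} for arbitrary $0\le a,b\in S(\tau)$. In particular, the unbounded case is already covered by the cited reference, so your truncation-and-limit programme is not needed.

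Your reduction strategy is sound in principle but runs in the \emph{opposite} direction to the paper. You propose to (i) extract from \cite{Ko1992},\cite{Ha} the logarithmic submajorization $\Lambda(\vert ab\vert^r)\le\Lambda(a^rb^r)$ for $r\ge 1$ (and the reverse for $0<r\le 1$), and then (ii) feed this into Proposition~\ref{propBr}\,(i)$\Rightarrow$(iii) to obtain the $\varphi$-inequalities \eqref{eqnKo5}--\eqref{eqnKo6}. The paper instead takes \eqref{eqnKo5}--\eqref{eqnKo6} as the cited input and, in Corollary~\ref{corKALT}, applies Proposition~\ref{propBr}\,(iii)$\Rightarrow$(i) to deduce the logarithmic form. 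Since \cite{Ko1992} indeed establishes the determinant inequality as the core step, and \cite{Ha} removes the $\tau$-compactness hypothesis, either direction is legitimate; but once you are invoking those references you already have the full Proposition~\ref{propalt1}, so the detour through Proposition~\ref{propBr} and the approximation $a_n=a\wedge n\Identity$ buys nothing new.

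One genuine technical caution in your approach: Proposition~\ref{propBr} is stated for $f,g\in L_{\log_+}$, whereas Proposition~\ref{propalt1} is asserted for \emph{all} $0\le a,b\in S(\tau)$, with no $L_{\log_+}$ hypothesis. Thus the implication (i)$\Rightarrow$(iii) cannot be applied directly to $\mu(\vert ab\vert^r)$ and $\mu(a^rb^r)$ in general; you correctly note this forces a truncation, but then the limiting step must be carried out at the level of the $\varphi$-integrals, not the $\log$-integrals (since the latter may be $+\infty$ on both sides). This is exactly the work that Han~\cite{Ha} has already done, and is why the paper simply cites that reference rather than reproving it.
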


The preceding  Araki-Lieb-Thirring type inequalities were first proved by 
Kosaki(~\cite{Ko1992} Theorem 2) for $\tau$-measurable operators $0\leq a,b$
which are $\tau$-compact, i.e., the operators $a,b$ satisfy the 
additional
assumption that $\lim_{t\to \infty}\mu(t;a)=0=\lim_{t\to \infty}\mu(t;b)$.
It has been noted recently by Han~\cite{Ha} Proposition 2.4 that Kosaki's 
arguments may be extended to
the general case.

We note the following consequence which shows that the function $t\to 
\vert a^tb^t\vert ^{\frac{1}{t}}$ is increasing with respect to submajorization,
 and is the counterpart for singular-value submajorization
of a result of Wang and Gong~\cite{WG} for positive semi-definite  matrices.
See also ~\cite{BHDA1995} Theorem 3. 

\begin{cor}
\label{corWG}
If $0\leq a,b\in S(\tau)$ and if $0<t\leq u<\infty$,
then
\begin{equation}
\label{eqnWG}
\vert a^tb^t\vert ^{\frac{1}{t}}
\prec\prec
\vert a^ub^u\vert ^{\frac{1}{u}}.
\end{equation}

\end{cor}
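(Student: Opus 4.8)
\textbf{Proof proposal for Corollary~\ref{corWG}.}
The plan is to reduce the statement to the two submajorization inequalities of Proposition~\ref{propalt1}, applied with the special choice $\varphi(s)=s$, which is admissible since the identity is continuous, increasing, vanishes at $0$, and $\exp(\cdot)$ is convex. First I would fix $0<t\leq u$ and set $r=u/t\geq1$. The idea is to write the pair of positive operators in a form where Proposition~\ref{propalt1} directly applies: put $a_0=a^t$, $b_0=b^t$, both of which are positive elements of $S(\tau)$. Then $a_0^rb_0^r=a^{tr}b^{tr}=a^ub^u$, while $\vert a_0b_0\vert^r=\vert a^tb^t\vert^r$. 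So inequality~(\ref{eqnKo5}) with $\varphi=\mathrm{id}$ reads
\begin{equation*}
\int_0^t\mu(s;\vert a^tb^t\vert^r)\,ds\leq\int_0^t\mu(s;a^ub^u)\,ds,\quad\forall t>0.
\end{equation*}
(Here I am reusing the letter $t$ for the variable of submajorization; in the write-up I would rename it.)

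Next I would translate this into the form claimed. Using $\mu(\vert c\vert^r)=\mu(c)^r$ and the identity $\mu(f(c))=f(\mu(c))$ for increasing continuous $f$ with $f(0)=0$ (equation~(\ref{eqn001})), we have $\mu(s;\vert a^tb^t\vert^r)=\mu(s;a^tb^t)^r=\mu(s;\vert a^tb^t\vert^{1/t})^{tr}=\mu(s;\vert a^tb^t\vert^{1/t})^{u}$, and likewise $\mu(s;a^ub^u)=\mu(s;\vert a^ub^u\vert)=\mu(s;\vert a^ub^u\vert^{1/u})^{u}$. Thus the displayed inequality says exactly
\begin{equation*}
\int_0^w\big(\mu(s;\vert a^tb^t\vert^{1/t})\big)^{u}\,ds\leq\int_0^w\big(\mu(s;\vert a^ub^u\vert^{1/u})\big)^{u}\,ds,\quad\forall w>0,
\end{equation*}
i.e. $\mu(\vert a^tb^t\vert^{1/t})^u\prec\prec\mu(\vert a^ub^u\vert^{1/u})^u$. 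The remaining step is to pass from submajorization of the $u$-th powers back to submajorization of the functions themselves. This is the one genuinely non-routine point: submajorization is not in general preserved by taking $u$-th roots. However, it follows from the Hardy--Littlewood--Pólya characterisation (the equivalence between submajorization and the inequality $\int_0^w\psi(f)\leq\int_0^w\psi(g)$ for all increasing convex $\psi$ with $\psi(0)=0$): if $f^u\prec\prec g^u$, then for any increasing convex $\psi$ with $\psi(0)=0$ the map $\psi_1(s):=\psi(s^{1/u})$ is again increasing with $\psi_1(0)=0$, and $\psi_1\circ\exp$ is convex precisely when $\psi\circ\exp$ is, so that applying the $L_{\log_+}$ version (Proposition~\ref{propBr}, or its classical analogue) to $f^u\prec\prec g^u$ with the function $\psi_1$ yields $\int_0^w\psi(f)\leq\int_0^w\psi(g)$; taking $\psi=\mathrm{id}$ gives $f\prec\prec g$.

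I expect the main obstacle to be exactly this last root-extraction step, since one must be slightly careful about which class of test functions is available (the operators need not be $\tau$-compact, so one uses the $L_{\log_+}$-framework of Proposition~\ref{propBr}, noting that all the relevant singular value functions lie in $L_{\log_+}$ by the Lorentz-type criterion for $L_p$-elements, or directly because the submajorization of $u$-th powers already forces integrability of $\log_+$). An alternative, cleaner route that avoids Proposition~\ref{propBr} altogether is to observe that submajorization is preserved under composition with an increasing convex function vanishing at the origin, so it suffices instead to apply Proposition~\ref{propalt1} with the single test function $\varphi(s)=s^{1/u}$ — which is \emph{concave}, not convex, and hence not directly admissible — so in fact the convex-function machinery, or the HLP root lemma above, really is needed, and that is the crux.
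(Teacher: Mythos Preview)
Your proof has a genuine gap at the root-extraction step, but the fix is precisely the ``alternative route'' you dismissed in your final paragraph.

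The implication $f^u\prec\prec g^u\Rightarrow f\prec\prec g$ is simply false: take $f=\chi_{[0,2)}$ and $g=\sqrt{2}\,\chi_{[0,1)}$; then $f^2\prec\prec g^2$ but $f\not\prec\prec g$. Your attempted justification via Proposition~\ref{propBr} does not repair this: from $f^u\prec\prec g^u$ you only have condition~(iii) of that proposition for the single test function $\varphi=\mathrm{id}$, not for all admissible $\varphi$, so you cannot invoke the equivalence (iii)$\Rightarrow$(i) to pass to logarithmic submajorization and then back out with a different $\varphi$. (There is also no reason, for arbitrary $a,b\in S(\tau)$, why the relevant singular value functions should lie in $L_{\log_+}$.)

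The error that cost you the argument is in your last paragraph: you discarded $\varphi(s)=s^{1/u}$ as inadmissible because it is concave. But the hypothesis of Proposition~\ref{propalt1} is that $\varphi\circ\exp$ be convex, not $\varphi$ itself; and $\varphi\circ\exp(s)=e^{s/u}$ is certainly convex. Applying~(\ref{eqnKo5}) directly with $r=u/t\geq1$, $a_0=a^t$, $b_0=b^t$, and this $\varphi$ gives
\[
\int_0^w\mu(s;|a^tb^t|^{u/t})^{1/u}\,ds\;\leq\;\int_0^w\mu(s;a^ub^u)^{1/u}\,ds,\qquad w>0,
\]
which is exactly $|a^tb^t|^{1/t}\prec\prec|a^ub^u|^{1/u}$, with no root extraction needed. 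The paper does essentially the same thing, using instead~(\ref{eqnKo6}) with $r=t/u\leq1$ and $\varphi(s)=s^{1/t}$, followed by the substitution $a\mapsto a^u$, $b\mapsto b^u$.
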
 
\begin{proof}
In equation~(\ref{eqnKo6}), set $r=t/u,\ \varphi(\cdot)=(\cdot)^{\frac{1}{t}}$ to obtain
\begin{equation*}
\mu(\vert a^{\frac{t}{u}}b^{\frac{t}{u}}\vert ^{\frac{1}{t}})=
\left[ \mu(a^{\frac{t}{u}}b^{\frac{t}{u}})\right]^{\frac{1}{t}}
\prec\prec \left[\mu(\vert ab\vert ^{\frac{t}{u}})\right]^{\frac{1}{t}}
=\mu(\vert ab\vert ^{\frac{1}{u}}).
\end{equation*}
The submajorisation (\ref{eqnWG}) now follows by replacing $a, b$ by $a^u,b^u$ respectively.

\end{proof}

Before proceeding, it is desirable to introduce some additional notation. If $x,y\in L_{\log_+}(\tau)$, then we shall say that $x$ is {\it logarithmically submajorized} by $y$, written $x\prec\prec_{\log}y$, if and only if 
\begin{equation*}
\int_0^t\log \mu(s;x)ds\leq \int_0^t\log \mu(s;y)ds
\end{equation*}
Observe that $x\prec\prec_{\log}y$ if and only if $\Lambda(x)\leq \Lambda(y)$. With this notation, it should also be noted, for ease of reference,  that the Weyl inequality 
\begin{equation*}
\Lambda(xy)\leq \Lambda(x)\Lambda(y)
\end{equation*}
given in Theorem~\ref{thmFK}(ii) may be reformulated in terms of logarithmic submajorisation by the inequality
\begin{equation*}
xy\prec\prec_{\log}\mu(x)\mu(y).
\end{equation*}
 Indeed, one need only observe that, for all $t>0$,
\begin{align*}
\int_0^t\log\mu(s;xy)ds
&=\log \Lambda(t;xy)\\
&\leq \log \Lambda(t;x)+\log \Lambda(t;y)\\
&=\int_0^t\log \mu(s;x)ds +\int_0^t\log\mu(s;y)ds\\
&=\int_0^t\log(\mu(s;x)\mu(s;y))ds.
\end{align*}

\begin{cor}
\label{corKALT} If $0\leq a,b\in L_{\log_+}(\tau)$ 
then 
\begin {equation}
\label{eqnA9}
\Lambda(\vert ab\vert ^r)\leq \Lambda (a^rb^r),\quad r\geq 1;
\end {equation}
and
\begin{equation}
\label{eqnA10}
\Lambda(\vert ab\vert ^r)\geq \Lambda(a^rb^r),\quad 0<r\leq 1.
\end{equation}
\end{cor}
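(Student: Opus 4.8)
The corollary is an immediate reformulation of Proposition~\ref{propalt1} once one recognises that $\Lambda$ is exactly the exponential of the "logarithmic integral'' of $\mu$. The key observation is that, for $0\leq a,b\in L_{\log_+}(\tau)$, both $\vert ab\vert^r$ and $a^rb^r$ lie in $L_{\log_+}(\tau)$: indeed $L_{\log_+}(\tau)$ is an algebra by Proposition~\ref{propFK}, and it is closed under taking the modulus and under positive powers, as recorded in the remarks following \eqref{eqnFK2}. So the determinant function $\Lambda(\cdot\,;\cdot)$ is defined on all the operators that appear, and the inequalities \eqref{eqnA9}, \eqref{eqnA10} are meaningful.

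First I would apply Proposition~\ref{propalt1} with the specific choice $\varphi(t)=\log_+(rt)$... but more cleanly: the hypothesis on $\varphi$ in Proposition~\ref{propalt1} is precisely that $\varphi$ is continuous increasing on $[0,\infty)$, $\varphi(0)=0$, and $\varphi\circ\exp$ is convex — this is the same class of test functions appearing in Proposition~\ref{propBr}(iii). Hence \eqref{eqnKo5} says that, for $r\geq 1$, the singular-value function $\mu(\vert ab\vert^r)$ satisfies condition (iii) of Proposition~\ref{propBr} relative to $\mu(a^rb^r)$ (after checking that these functions are non-increasing, which they are, and lie in $L_{\log_+}$, which we just did). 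By the equivalence (iii)$\iff$(i) of Proposition~\ref{propBr}, this is equivalent to
\begin{equation*}
\int_0^t\log\mu(s;\vert ab\vert^r)\,ds\leq \int_0^t\log\mu(s;a^rb^r)\,ds,\quad \forall t>0,
\end{equation*}
i.e.\ $\vert ab\vert^r\prec\prec_{\log}a^rb^r$, which by the definition of $\Lambda$ (exponentiate both sides, using monotonicity of $\exp$) is exactly $\Lambda(t;\vert ab\vert^r)\leq\Lambda(t;a^rb^r)$ for all $t>0$, that is, \eqref{eqnA9}. The case $0<r\leq 1$ is identical, starting from \eqref{eqnKo6} instead of \eqref{eqnKo5}, with the inequality reversed throughout; this gives \eqref{eqnA10}.

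There is essentially no obstacle here — the only thing to be careful about is that Proposition~\ref{propBr} requires its functions to be in $L_{\log_+}$, so that the integrals $\int_0^t\log f(s)\,ds$ are well-defined (finite or $-\infty$); this is why the hypothesis $a,b\in L_{\log_+}(\tau)$ (rather than merely $a,b\in S(\tau)$, as in Proposition~\ref{propalt1}) is imposed in the statement of the corollary. One should also note, for the application of Proposition~\ref{propBr}, that membership of $\vert ab\vert^r$ and $a^rb^r$ in $L_{\log_+}(\tau)$ guarantees that both sides of the displayed logarithmic inequality are well-defined. Modulo these routine verifications, the corollary follows by combining Proposition~\ref{propalt1} with the equivalence of logarithmic submajorization and test-function majorization furnished by Proposition~\ref{propBr}, together with the elementary identity $\log\Lambda(t;z)=\int_0^t\log\mu(s;z)\,ds$.
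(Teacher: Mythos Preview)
Your argument is correct and is exactly the paper's approach: the paper states that the corollary ``follows immediately from Proposition~\ref{propalt1} via the implication (iii)$\Longrightarrow$(i) of Proposition~\ref{propBr},'' which is precisely what you do. Your additional remarks verifying that the relevant operators lie in $L_{\log_+}(\tau)$ (so that Proposition~\ref{propBr} applies) are appropriate and make the argument complete.
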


The Corollary follows immediately from Proposition~\ref{propalt1} via the implication (iii)$\Longrightarrow$ (i) of Proposition~\ref{propBr}.

We need
the following generalization of the  polar decomposition given 
in ~\cite{GMMN} Theorem 2.7

\begin{lem}
\label{lemGMMN} If $x\in S(\tau)$ has polar decomposition $x=u\vert x\vert$ and
if $\phi,\psi$ are Borel functions on ${\mathbb R}$ such that
$\phi(\lambda)\psi(\lambda)=\lambda,\ \lambda \in {\mathbb R}$ then
\begin{equation}
\label{eqnBMMN}
x=\phi(\vert x^*\vert)u\psi(\vert  x \vert).
\end{equation}
In particular,
if $1/p+1/q=1$, then
$$
x=\vert x^*\vert ^{\frac{1}{p}} u  \vert x\vert ^{\frac{1}{q}}.
$$
\end{lem}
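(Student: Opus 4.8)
The plan is to deduce the identity from the basic intertwining property of the polar decomposition, namely that the partial isometry $u$ conjugates the Borel functional calculus of $\vert x\vert$ into that of $\vert x^*\vert$. Write $x=u\vert x\vert$ with $u\in\M$ the partial isometry satisfying $u^*u=s(\vert x\vert)=:p$ and $uu^*=s(\vert x^*\vert)=:q$, and recall that $x^*$ then has polar decomposition $x^*=u^*\vert x^*\vert$, so in particular $x=\vert x^*\vert u$. The crucial claim is that
\[
u\,f(\vert x\vert)=f(\vert x^*\vert)\,u
\]
for every bounded Borel function $f$ on $\mathbb R$ (and, after truncation, for the unbounded Borel functions that occur below as well), all operators being understood as elements of $S(\tau)$.

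To prove the claim, I would begin from the identity $u\vert x\vert^2u^*=xx^*=\vert x^*\vert^2$ together with the fact that $a\mapsto uau^*$ is a $*$-isomorphism of the corner $p\M p$ onto $q\M q$. This isomorphism transports the Borel functional calculus of $p\vert x\vert^2p$ onto that of $q\vert x^*\vert^2q$, which yields $u\,f(\vert x\vert)\,u^*=f(\vert x^*\vert)-f(0)(\mathbf 1-q)$. Multiplying this on the right by $u$ and using $(\mathbf 1-q)u=0$ removes the correction term, while $u(\mathbf 1-p)=0$ gives $u\,f(\vert x\vert)\,p=u\,f(\vert x\vert)$; combining these we obtain $u\,f(\vert x\vert)=f(\vert x^*\vert)\,u$ with no constraint on $f(0)$. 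For an unbounded Borel $f$ such as $f(\lambda)=\lambda^{1/p}$ (which arises when $\vert x\vert$ is unbounded) one applies the bounded case to $f\chi_{[-n,n]}$ and passes to the limit in the measure topology, using continuity of multiplication in $S(\tau)$; this is precisely the argument of \cite{GMMN} Theorem~2.7.

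Granting the intertwining relation, the lemma is a one-line computation:
\[
\phi(\vert x^*\vert)\,u\,\psi(\vert x\vert)=\phi(\vert x^*\vert)\,\psi(\vert x^*\vert)\,u=(\phi\psi)(\vert x^*\vert)\,u=\vert x^*\vert\,u=x,
\]
where the second equality uses $g_1(T)g_2(T)=(g_1g_2)(T)$ for the Borel functional calculus and the third uses the hypothesis $\phi(\lambda)\psi(\lambda)=\lambda$ on $\sigma(\vert x^*\vert)\subseteq[0,\infty)$. The final assertion follows by taking $\phi(\lambda)=\lambda^{1/p}$ and $\psi(\lambda)=\lambda^{1/q}$ on $[0,\infty)$, since then $\phi(\lambda)\psi(\lambda)=\lambda^{1/p+1/q}=\lambda$. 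The only genuinely technical step — and hence the main obstacle — is the second paragraph: establishing the intertwining relation rigorously for (possibly unbounded) Borel functions of $\tau$-measurable operators and correctly handling the value at the origin; the rest is formal.
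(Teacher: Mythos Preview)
Your argument is correct. Note, however, that the paper does not actually give a proof of this lemma: it is stated with the attribution ``given in \cite{GMMN} Theorem 2.7'' and used without further justification. So there is no in-paper proof to compare against; you have supplied one where the authors simply cite the literature. Your approach --- the intertwining relation $u\,f(\vert x\vert)=f(\vert x^*\vert)\,u$ followed by the one-line functional-calculus computation --- is the standard route and is in the spirit of the cited reference. (Incidentally, once you have $u\vert x\vert=\vert x^*\vert u$, iterating gives $u\vert x\vert^n=\vert x^*\vert^n u$ for all $n\ge 0$, hence the intertwining for polynomials and then for bounded Borel $f$ by approximation; this avoids tracking the $f(0)$ correction term, though your version is also fine.)
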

\begin{prop}
\label{propKIp3}
If $a,b,x\in L_{\log_+}(\tau)$ with $a,b\geq 0$, if $1/p+1/q=1$ and if $r>0$ then
$$
\Lambda (\vert axb\vert ^r)\leq \Lambda^{\frac{r}{p}}(a^p x)\Lambda^{\frac{r}{q}}(x b^q),
$$
or, equivalently,
\begin{equation}
\label{eqnKIp3}
\vert axb\vert ^r\prec\prec_{\log} \mu(\vert a^px\vert^{\frac{r}{p}})\mu(\vert xb^q\vert ^{\frac{r}{q}}).
\end{equation}

\begin{proof}
If $x=u\vert x\vert$ is the polar decomposition, then it follows from Lemma
~\ref{lemGMMN} that 
$$
axb=a\vert x^*\vert ^{\frac{1}{p}}u\vert x\vert ^{\frac{1}{q}}b.
$$

It follows from the inequality (\ref{eqnWI})
that

$$
\Lambda (axb)\leq \Lambda (a\vert x^*\vert ^{\frac{1}{p}})
\Lambda (u\vert x\vert ^{\frac{1}{q}}b)
\leq
\Lambda (a\vert x^*\vert ^{\frac{1}{p}})
\Lambda (\vert x\vert ^{\frac{1}{q}}b).
$$
Using Corollary~\ref{corKALT}, observe that
\begin{align*}
\Lambda (a\vert x^*\vert ^{\frac{1}{p}})&\leq \Lambda^{\frac{1}{p}}(a^p\vert x^*\vert)
=\Lambda^{\frac{1}{p}}(\vert x^*\vert a^p)= \Lambda^{\frac{1}{p}}( x^* a^p)=
\Lambda^{\frac{1}{p}}(  a^px)\\
\Lambda(\vert x\vert ^{\frac{1}{q}}b)
&\leq \Lambda^{\frac{1}{q}}(\vert x\vert b^q)=\Lambda^{\frac{1}{q}}( x b^q)
\end{align*}
Consequently,
$$
\Lambda (axb)\leq \Lambda^{\frac{1}{p}}(a^p x)\Lambda^{\frac{1}{q}}(x b^q),
$$
and the assertion of the Proposition now follows by observing that 
\begin{equation*}
\Lambda (axb)^r=\Lambda (\vert axb\vert ^r).
\end{equation*}

\end{proof}
\end{prop}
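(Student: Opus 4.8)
The plan is to reduce everything to the case $r=1$ and then bootstrap. For any $z\in L_{\log_+}(\tau)$ the elementary properties of $\Lambda$ give $\Lambda(|z|^{r})=\Lambda(|z|)^{r}=\Lambda(z)^{r}$, and by Proposition~\ref{propFK} the operator $axb$ (and hence $|axb|^{r}$) lies in $L_{\log_+}(\tau)$; so once we know
$$
\Lambda(axb)\le \Lambda^{1/p}(a^{p}x)\,\Lambda^{1/q}(xb^{q}),
$$
raising both sides to the power $r$ yields $\Lambda(|axb|^{r})=\Lambda(axb)^{r}\le \Lambda^{r/p}(a^{p}x)\,\Lambda^{r/q}(xb^{q})$, which is the assertion.

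To prove the displayed $r=1$ inequality, the idea is to factorise $axb$ so that the Weyl inequality (Theorem~\ref{thmFK}) separates the contributions of $a$ and of $b$, splitting $x$ according to $1/p+1/q=1$. Writing $x=u|x|$ for the polar decomposition and applying Lemma~\ref{lemGMMN} with $\phi(\lambda)=\lambda^{1/p}$, $\psi(\lambda)=\lambda^{1/q}$ gives $x=|x^{*}|^{1/p}\,u\,|x|^{1/q}$, hence $axb=(a|x^{*}|^{1/p})(u|x|^{1/q}b)$. Since $z\ge 0,\ \alpha>0$ implies $z^{\alpha}\in L_{\log_+}(\tau)$, the operators $|x^{*}|^{1/p}$ and $|x|^{1/q}$ lie in $L_{\log_+}(\tau)$, and then so do the two bracketed factors, using Proposition~\ref{propFK} together with the contraction estimate $\mu(uz)\le\|u\|_{\infty}\,\mu(z)\le\mu(z)$. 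Theorem~\ref{thmFK} therefore applies and gives
$$
\Lambda(axb)\le \Lambda(a|x^{*}|^{1/p})\,\Lambda(u|x|^{1/q}b)\le \Lambda(a|x^{*}|^{1/p})\,\Lambda(|x|^{1/q}b),
$$
the last step again from $\mu(uz)\le\mu(z)$, which forces $\Lambda(uz)\le\Lambda(z)$.

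It remains to estimate the two factors, and here Corollary~\ref{corKALT} enters. Applying~(\ref{eqnA9}) with the positive operators $a,\,|x^{*}|^{1/p}$ and exponent $p\ge 1$, together with $\Lambda(|w|^{p})=\Lambda(w)^{p}$,
$$
\Lambda(a|x^{*}|^{1/p})^{p}=\Lambda\bigl(|a|x^{*}|^{1/p}|^{\,p}\bigr)\le \Lambda\bigl(a^{p}(|x^{*}|^{1/p})^{p}\bigr)=\Lambda(a^{p}|x^{*}|)=\Lambda(a^{p}x),
$$
the final equality from the identity $\Lambda(zw)=\Lambda(|z|\,|w^{*}|)$ with $z=a^{p}$, $w=x$. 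Symmetrically, applying~(\ref{eqnA9}) to $|x|^{1/q},\,b$ with exponent $q\ge 1$ gives $\Lambda(|x|^{1/q}b)\le\Lambda^{1/q}(|x|b^{q})=\Lambda^{1/q}(xb^{q})$. Substituting into the previous display yields $\Lambda(axb)\le\Lambda^{1/p}(a^{p}x)\,\Lambda^{1/q}(xb^{q})$, and the bootstrap above completes the first assertion. The equivalent form~(\ref{eqnKIp3}) is then just a rewriting: $\Lambda^{r/p}(a^{p}x)=\Lambda(|a^{p}x|^{r/p})$, $\Lambda^{r/q}(xb^{q})=\Lambda(|xb^{q}|^{r/q})$, and $\Lambda(t;f)\Lambda(t;g)=\exp\int_{0}^{t}\log(fg)\,dm$, so the determinant inequality says exactly $\int_{0}^{t}\log\mu(s;|axb|^{r})\,ds\le\int_{0}^{t}\log\bigl(\mu(s;|a^{p}x|^{r/p})\,\mu(s;|xb^{q}|^{r/q})\bigr)\,ds$ for all $t>0$, i.e.\ $|axb|^{r}\prec\prec_{\log}\mu(|a^{p}x|^{r/p})\mu(|xb^{q}|^{r/q})$.

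I expect the main obstacle to be organisational rather than conceptual: all the analytic substance is already packaged in Theorem~\ref{thmFK} and Corollary~\ref{corKALT}, so the real care lies in (a) choosing the factorisation of $axb$ via Lemma~\ref{lemGMMN} that matches the exponents $1/p,1/q$, and (b) verifying that every intermediate operator genuinely belongs to $L_{\log_+}(\tau)$ so that Theorem~\ref{thmFK}, Corollary~\ref{corKALT} and the invariance properties of $\Lambda$ are legitimately applicable, with the partial isometry $u$ from the polar decomposition handled solely through the bound $\mu(uz)\le\mu(z)$ and never through an unjustified cancellation.
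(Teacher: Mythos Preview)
Your proof is correct and follows essentially the same route as the paper: factorise $axb$ via the generalized polar decomposition of Lemma~\ref{lemGMMN}, split with the Weyl inequality (Theorem~\ref{thmFK}), bound each factor using the Araki--Lieb--Thirring inequality of Corollary~\ref{corKALT}, and then pass to the exponent $r$ via $\Lambda(|z|^{r})=\Lambda(z)^{r}$. Your write-up is in fact slightly more careful than the paper's in verifying $L_{\log_+}(\tau)$ membership of the intermediate operators and in spelling out the equivalence with~(\ref{eqnKIp3}).
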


The following generalised H\"older inequality for normed order ideals is 
well-known.  The simple proof is included for the sake of completeness.
\begin{prop}
\label{propHo}
Suppose that $E\subseteq S(m)$ is a a normed order ideal.
Suppose that $r,p,q>0$ and that $1/r=1/p+1/q$. If $\vert x\vert ^p\in E,
\vert  y\vert ^q\in 
E$, then
$\vert xy\vert ^r\in E$ and 
$$
\Vert \vert xy\vert^r \Vert _{E}^{\frac{1}{r}}\leq 
\Vert \vert x\vert ^p\Vert _{E}^{\frac{1}{p}}\Vert \vert y\vert ^q
\Vert _{E}^{\frac{1}{q}}.
$$
\end{prop}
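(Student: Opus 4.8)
The plan is to reduce the generalised Hölder inequality for the normed order ideal $E$ to the classical scalar Hölder inequality for decreasing rearrangements, using the submajorization inequality $\vert xy\vert^r \prec\prec \mu(\vert x\vert^p)^{r/p}\,\mu(\vert y\vert^q)^{r/q}$, which I will extract from the logarithmic-submajorization machinery already developed above.

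First I would observe that by the remarks in Section 2, $E\subseteq S(m)$ is symmetrically normed (a normed order ideal), so it suffices to control $\mu(\vert xy\vert^r)=\mu(\vert xy\vert)^r$ by something in $E$ with comparable norm. I would record the singular-value identities $\mu(\vert x\vert^p)=\mu(x)^p$ and $\mu(\vert y\vert^q)=\mu(y)^q$, so that $\vert x\vert^p\in E$ (resp. $\vert y\vert^q\in E$) simply says $\mu(x)^p\in E$ (resp. $\mu(y)^q\in E$). The key analytic input is Corollary~\ref{corWG} together with the Weyl inequality: writing $t/u=r/p$ in the Araki--Lieb--Thirring inequality~(\ref{eqnKo6}) with $\varphi(s)=s^{1/r}$, or more directly invoking the already-derived reformulation $xy\prec\prec_{\log}\mu(x)\mu(y)$ and Remark~\ref{rmkLS}, one gets the submajorization
\begin{equation*}
\int_0^t\mu(s;\vert xy\vert^r)\,ds\leq\int_0^t\bigl(\mu(s;x)^{r}\mu(s;?)^{r}\bigr)\dots
\end{equation*}
— more precisely, I would argue that $\mu(xy)\prec\prec_{\log}\mu(x)\mu(y)$ implies, via Proposition~\ref{propBr}(iii) with $\varphi(s)=s^r$, the inequality $\mu(xy)^r\prec\prec\mu(x)^r\mu(y)^r$, and then apply this with $x,y$ replaced by $\vert x\vert,\vert y\vert$ after the interpolation step that converts powers $p,q$ into the exponent $r$.

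The cleanest route, which I would actually take, is: apply Hölder's inequality for decreasing rearrangements pointwise — since $1/r=1/p+1/q$, for scalar-valued decreasing functions $f=\mu(x),g=\mu(y)$ one has $(fg)^r=f^r g^r$ and $f^r=(f^p)^{r/p}$, $g^r=(g^q)^{r/q}$, so
\begin{equation*}
\mu(xy)^r\prec\prec \mu(x)^r\mu(y)^r=\bigl(\mu(x)^p\bigr)^{r/p}\bigl(\mu(y)^q\bigr)^{r/q},
\end{equation*}
where the submajorization on the left is Weyl's inequality~(\ref{eqnWI}) pushed through Remark~\ref{rmkLS} and Proposition~\ref{propBr}(iii). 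Since $E$ is symmetrically normed, $\mu(x)^p\in E$ and $\mu(y)^q\in E$ give, by the classical Hölder inequality for the function norm (an elementary consequence of the Calderón--Mityagin description of symmetric norms, or directly of submajorization together with the norm estimate for products of nonnegative decreasing functions), that $\bigl(\mu(x)^p\bigr)^{r/p}\bigl(\mu(y)^q\bigr)^{r/q}\in E$ with
\begin{equation*}
\bigl\Vert\bigl(\mu(x)^p\bigr)^{r/p}\bigl(\mu(y)^q\bigr)^{r/q}\bigr\Vert_E\leq\bigl\Vert\mu(x)^p\bigr\Vert_E^{r/p}\bigl\Vert\mu(y)^q\bigr\Vert_E^{r/q};
\end{equation*}
combining with the submajorization and the ideal property then yields $\vert xy\vert^r\in E$ and $\Vert\vert xy\vert^r\Vert_E\leq\Vert\vert x\vert^p\Vert_E^{r/p}\Vert\vert y\vert^q\Vert_E^{r/q}$, which is the claim after taking $r$-th roots.

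The main obstacle, and the only place requiring genuine care, is the purely commutative (function-space) Hölder inequality $\Vert f^{r/p}g^{r/q}\Vert_E\leq\Vert f\Vert_E^{r/p}\Vert g\Vert_E^{r/q}$ for $0\leq f,g\in E$ decreasing: this needs the convexity lemma that the map $(f,g)\mapsto f^{\theta}g^{1-\theta}$ is jointly "submajorization-monotone" under the normalization $\Vert f\Vert_E=\Vert g\Vert_E=1$, i.e. $f^\theta g^{1-\theta}\prec\prec \theta f+(1-\theta)g$ pointwise together with convexity of the norm, so that $\Vert f^\theta g^{1-\theta}\Vert_E\leq \theta\Vert f\Vert_E+(1-\theta)\Vert g\Vert_E=1$; the general estimate follows by homogeneity. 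Since the paper only claims this is "well-known" and includes the proof "for completeness," I expect this commutative step is exactly what the authors' short argument supplies, and it is routine once the submajorization $\vert xy\vert^r\prec\prec(\mu(x)^p)^{r/p}(\mu(y)^q)^{r/q}$ from Weyl's inequality is in hand.
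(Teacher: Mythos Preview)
Your detour through Weyl's inequality and logarithmic submajorization is both unnecessary and, as written, unjustified. The proposition is stated for $E\subseteq S(m)$, the \emph{commutative} algebra of measurable functions on $[0,\infty)$; here $x,y$ are functions, so $\vert xy\vert^r=\vert x\vert^r\vert y\vert^r$ holds \emph{pointwise}, and no singular-value machinery is needed. The paper's entire argument is exactly what you tuck into your last paragraph as the ``purely commutative step'': normalize $\Vert\,\vert x\vert^p\Vert_E=\Vert\,\vert y\vert^q\Vert_E=1$, apply the scalar Young inequality
\[
\vert x\vert^r\vert y\vert^r\leq \tfrac{r}{p}\vert x\vert^p+\tfrac{r}{q}\vert y\vert^q
\]
pointwise, and use that $E$ is a normed order ideal (monotone under pointwise domination) to conclude $\Vert\,\vert xy\vert^r\Vert_E\leq 1$. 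That is the whole proof.

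The route you actually sketch has a gap: from the submajorization $\vert xy\vert^r\prec\prec \mu(x)^r\mu(y)^r$ you want to deduce $\vert xy\vert^r\in E$ with the corresponding norm bound, but this requires $E$ to be monotone with respect to \emph{submajorization} (i.e.\ fully symmetric), whereas the hypothesis only gives the strictly weaker order-ideal property $\vert f\vert\leq\vert g\vert\Rightarrow\Vert f\Vert_E\leq\Vert g\Vert_E$. You also conflate ``normed order ideal'', ``symmetrically normed'', and ``fully symmetric'', which are three different conditions of increasing strength. Note too that this proposition is used \emph{later} (in Proposition~\ref{propKIp4} and Corollary~\ref{corHo}) as the commutative building block for the genuinely noncommutative H\"older inequalities; invoking Theorem~\ref{thmFK} here reverses the logical flow of the paper.
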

\begin{proof}
It suffices to assume that $\Vert x\Vert _{E^{p}}=1=\Vert y\Vert _{E^{q}}$.
Using the well-known numerical Young's inequality, it follows that
$$
\frac{\vert x\vert^r\vert y\vert^r}{r}\leq 
\frac{\vert x\vert^{p}}{p}+\frac{\vert y\vert ^{q}}{q}.
$$
Since $ \vert x\vert^{p},\vert y\vert ^{q}\in E$, it follows from the
linearity of $E$ and the fact that $E$ is an order ideal, that 
$\vert x\vert^r\vert y\vert^r\in E$. Consequently, using
the fact that $E$ is a normed ideal,
$$
\Vert \ \vert xy\vert^r\ \Vert_E
\leq r\left(
\frac{\Vert\ \vert x\vert^{p}\ \Vert_E}{p}
+\frac{\Vert\ \vert y\vert^{q}\ \Vert_E}{q}
\right)\leq 1.
$$
This  suffices to
complete the proof.
\end{proof}

If $E\subseteq S(\tau)$ is a symmetric (Banach) space, then $E\subseteq L_1(\tau) +{\mathcal M}$ so that $E\subseteq L_{\log_+}(\tau)$.  The norm on the symmetric space $E$ is said to be {\it monotone with respect to logarithmic submajorization} if whenever $x\in E, y\in L_{\log_+}(\tau)$ satisfy
$y\prec\prec_{\log}x$, it follows that $y\in E$ and $\Vert y\Vert _E\leq \Vert x\Vert _E$.

The principal result of this section now follows. In the setting of trace ideals, this result may be found in \cite{Ko1998} Theorem 3; however, the proof given in 
\cite{Ko1998} uses a standard trick with anti-symmetric tensors which is not available in the present more general setting.

\begin{prop}
\label{propKIp4} Suppose that  $E$ is a  symmetric space on $[0,\infty)$ whose norm is  
monotone with respect to logarithmic submajorisation. Suppose that $x\in
S(\tau)$, that $0\leq a,b\in S(\tau)$ and that $1<p,p^\prime<\infty$ with
conjugate exponents $q=p/(p-1),\ q^\prime =p^\prime/(p^\prime-1)$. If $r>0$ and
if $\vert
a^px\vert^{\frac{rp^\prime}{p}},\ \vert xb^q\vert^{\frac{rq^\prime}{q}}\in
E(\tau)$
then $\vert axb\vert ^r\in E(\tau)$ and
\begin{equation}
\label{eqnHoIneq}
\Vert \ \vert axb\vert ^r\Vert _{E(\tau)}
\leq 
\Vert \ \vert
a^px\vert^{\frac{rp^\prime}{p}}\Vert _{E(\tau)}^{\frac{1}{p^\prime}}
\Vert \ \vert xb^q\vert^{\frac{rq^\prime}{q}}\Vert
_{E(\tau)}^{\frac{1}{q^\prime}}.
\end{equation}
\end{prop}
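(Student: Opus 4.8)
The plan is to reduce~(\ref{eqnHoIneq}) to two commutative ingredients --- the logarithmic H\"older submajorization of Proposition~\ref{propKIp3} and the elementary H\"older inequality of Proposition~\ref{propHo} --- together with the assumed monotonicity of $\Vert\cdot\Vert_{E}$ under logarithmic submajorization. First I would record that $E(\tau)\subseteq L_{1}(\tau)+\M\subseteq L_{\log_+}(\tau)$, so that the hypotheses $\vert a^{p}x\vert^{\frac{rp'}{p}},\,\vert xb^{q}\vert^{\frac{rq'}{q}}\in E(\tau)$ force $a^{p}x,\,xb^{q}\in L_{\log_+}(\tau)$; here one uses that, for $z\geq 0$ and $\alpha>0$, one has $z^{\alpha}\in L_{\log_+}(\tau)$ if and only if $z\in L_{\log_+}(\tau)$, and that membership in $L_{\log_+}(\tau)$ depends only on the singular value function.

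Granting for the moment that $a,b,x\in L_{\log_+}(\tau)$, Proposition~\ref{propKIp3} (in the form~(\ref{eqnKIp3})) gives
\begin{equation*}
\vert axb\vert^{r}\;\prec\prec_{\log}\;g,\qquad g:=\mu\bigl(\vert a^{p}x\vert^{\frac{r}{p}}\bigr)\,\mu\bigl(\vert xb^{q}\vert^{\frac{r}{q}}\bigr)=\mu(a^{p}x)^{\frac{r}{p}}\,\mu(xb^{q})^{\frac{r}{q}},
\end{equation*}
the function $g$ being a product of two non-increasing non-negative functions on $[0,\infty)$, hence non-increasing with $\mu(g)=g$. Next I would feed $g$ through Proposition~\ref{propHo}, applied to the commutative symmetric space $E$ on $[0,\infty)$ (which is in particular a normed order ideal) with exponent triple $(1,p',q')$, legitimate since $\frac{1}{p'}+\frac{1}{q'}=1$ as $q'=p'/(p'-1)$. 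Writing $f_{1}=\mu(a^{p}x)^{\frac{r}{p}}$ and $f_{2}=\mu(xb^{q})^{\frac{r}{q}}$, so that $g=f_{1}f_{2}$, the identity $\mu(\vert z\vert^{\alpha})=\mu(z)^{\alpha}$ gives $f_{1}^{p'}=\mu(\vert a^{p}x\vert^{\frac{rp'}{p}})\in E$ and $f_{2}^{q'}=\mu(\vert xb^{q}\vert^{\frac{rq'}{q}})\in E$; Proposition~\ref{propHo} then yields $g\in E$ and
\begin{equation*}
\Vert g\Vert_{E}\;\leq\;\bigl\Vert f_{1}^{p'}\bigr\Vert_{E}^{\frac{1}{p'}}\,\bigl\Vert f_{2}^{q'}\bigr\Vert_{E}^{\frac{1}{q'}}\;=\;\bigl\Vert\,\vert a^{p}x\vert^{\frac{rp'}{p}}\,\bigr\Vert_{E(\tau)}^{\frac{1}{p'}}\,\bigl\Vert\,\vert xb^{q}\vert^{\frac{rq'}{q}}\,\bigr\Vert_{E(\tau)}^{\frac{1}{q'}}.
\end{equation*}
Finally, since $\vert axb\vert^{r}\in L_{\log_+}(\tau)$ (the class being an algebra) is logarithmically submajorized by the element $g$ of $E$, the assumed monotonicity of $\Vert\cdot\Vert_{E}$ under logarithmic submajorization gives $\vert axb\vert^{r}\in E(\tau)$ and $\Vert\,\vert axb\vert^{r}\,\Vert_{E(\tau)}\leq\Vert g\Vert_{E}$, which is precisely~(\ref{eqnHoIneq}).

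The step I expect to be the main obstacle is exactly this reduction to $a,b,x\in L_{\log_+}(\tau)$: a priori only $a,b,x\in S(\tau)$ is assumed, whereas the proof of Proposition~\ref{propKIp3} genuinely requires (through the Weyl inequality and Corollary~\ref{corKALT}) that the individual factors lie in $L_{\log_+}(\tau)$. To handle the general case I would truncate, setting $a_{n}=a\wedge n\Identity$, $b_{n}=b\wedge n\Identity$ and $x_{n}=x\,e^{\vert x\vert}([0,n])$, all of which lie in $\M\subseteq L_{\log_+}(\tau)$. A routine singular-value estimate --- conjugating, using $\mu(y^{*}y)=\mu(yy^{*})$ and $\mu(yv)\leq\Vert v\Vert_{\infty}\mu(y)$, together with $a_{n}^{2p}\leq a^{2p}$ and $b_{n}^{2q}\leq b^{2q}$ --- shows $\mu(a_{n}^{p}x_{n})\leq\mu(a^{p}x)$ and $\mu(x_{n}b_{n}^{q})\leq\mu(xb^{q})$, so that Proposition~\ref{propKIp3} applied to $a_{n},b_{n},x_{n}$ yields $\vert a_{n}x_{n}b_{n}\vert^{r}\prec\prec_{\log}g_{n}$ with $g_{n}\leq g$ pointwise; hence $\vert a_{n}x_{n}b_{n}\vert^{r}\prec\prec_{\log}g$ and $\Vert\,\vert a_{n}x_{n}b_{n}\vert^{r}\,\Vert_{E(\tau)}\leq\Vert g\Vert_{E}$ for every $n$. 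Since $a_{n}x_{n}b_{n}\to axb$ in the measure topology, one finishes by letting $n\to\infty$; the delicate point is that the logarithmic integrals $\int_{0}^{t}\log\mu(s;\,\cdot\,)\,ds$ may equal $-\infty$, so this passage to the limit should be carried out with a Fatou-type argument --- controlling $\int_{0}^{t}\log_{+}\mu(s;\vert axb\vert^{r})\,ds$ via $g\in L_{\log_+}$ and arranging the truncations so that $\mu(a_{n}^{p}x_{n})$ and $\mu(x_{n}b_{n}^{q})$ increase with $n$.
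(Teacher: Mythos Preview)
Your core argument --- invoke the logarithmic submajorization~(\ref{eqnKIp3}), feed the resulting product of decreasing functions through the commutative H\"older inequality of Proposition~\ref{propHo} with exponents $(1,p',q')$, and close with the assumed monotonicity of $\Vert\cdot\Vert_E$ under $\prec\prec_{\log}$ --- is exactly the paper's proof.

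You are right, though, that the paper quietly applies Proposition~\ref{propKIp3} without verifying its stated hypothesis $a,b,x\in L_{\log_+}(\tau)$. Your truncation scheme is a legitimate repair: the singular-value estimates $\mu(a_n^{p}x_n)\leq\mu(a^{p}x)$ and $\mu(x_nb_n^{q})\leq\mu(xb^{q})$ are correct (conjugate and use $a_n^{2p}\leq a^{2p}$, $b_n^{2q}\leq b^{2q}$, $p_n\vert x\vert^2 p_n\leq\vert x\vert^2$), and the Fatou step can be made precise via Proposition~\ref{propBr}(ii) since measure convergence yields $\mu(s;axb)\leq\liminf_n\mu(s;a_nx_nb_n)$ at continuity points of $\mu(\cdot;axb)$. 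There is, however, a shorter fix that avoids truncation and limits altogether: the \emph{proof} of Proposition~\ref{propKIp3} actually only needs $a^{p}x,\,xb^{q}\in L_{\log_+}(\tau)$, which your first paragraph already extracts from the hypotheses. Indeed, Proposition~\ref{propalt1} is stated for arbitrary $0\leq a,b\in S(\tau)$, and applying~(\ref{eqnKo6}) with $\varphi=\log_+$ (so $\varphi\circ\exp(s)=s_+$ is convex) to the pair $(a^{p},\vert x^*\vert)$ with exponent $1/p$ shows that $a^{p}x\in L_{\log_+}(\tau)$ forces $a\vert x^*\vert^{1/p}\in L_{\log_+}(\tau)$; similarly $\vert x\vert^{1/q}b\in L_{\log_+}(\tau)$. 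The Weyl inequality then applies to these two factors, and the appeal to Corollary~\ref{corKALT} --- whose derivation from Propositions~\ref{propalt1} and~\ref{propBr} requires only that the relevant \emph{products} $a\vert x^*\vert^{1/p}$ and $a^{p}\vert x^*\vert$ lie in $L_{\log_+}(\tau)$, not the factors separately --- goes through unchanged.
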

\begin{proof}
Using the submajorization
$$
\vert axb\vert^r
\prec\prec_{\log}
\mu(\vert a^px\vert ^{\frac{r}{p}})
\mu(\vert xb^q\vert ^{\frac{r}{q}})
$$
given by (\ref{eqnKIp3}),
and observing that 
\begin{equation*}
\frac{1}{(p^\prime r)}+\frac{1}{(q^\prime r)}=\frac{1}{r},
\end{equation*} 
it then follows from the first assertion of Proposition~\ref{propHo} and the assumptions  

\begin{align*}\mu(\vert a^px\vert^{\frac{rp^\prime}{p}})
&=\mu(\vert a^px\vert ^{\frac{1}{p}})^{rp^\prime}\in E\\
\mu(\vert xb^q\vert^{\frac{rq^\prime}{q}})
&=\mu(\vert xb^q\vert^{\frac{1}{q}})^{rq^\prime}\in
E
\end{align*} 
 that 
\begin{equation*}
\left(\mu(\vert a^px\vert ^{\frac{1}{p}})\mu(\vert xb^q\vert ^{\frac{1}{q}})\right)^r
=\mu(\vert a^px\vert ^{\frac{r}{p}})
\mu(\vert xb^q\vert ^{\frac{r}{q}})\in E.
\end{equation*}
Since the norm on $E$ is monotone with respect to logarithmic submajorisation, it follows further that $\vert axb\vert ^r\in E(\tau)$
and
\begin{equation*}
\Vert \ \vert axb\vert ^r\Vert _{E(\tau)}
\leq \ \Vert 
\mu(\vert a^px\vert ^{\frac{r}{p}})
\mu(\vert xb^q\vert ^{\frac{r}{q}})\Vert _{E(\tau)}.
\end{equation*}
Now using the second assertion of  Proposition~\ref{propHo}, it follows that
\begin{align*}
\Vert 
\mu(\vert a^px\vert ^{\frac{r}{p}})
\mu(\vert xb^q\vert ^{\frac{r}{q}})\Vert _{E(\tau)}
&\leq
\Vert\vert
 a^px\vert^{\frac{rp^\prime}{p}}\Vert _{E(\tau)}^{\frac{1}{p^\prime}}
\Vert \ \vert xb^q\vert^{\frac{rq^\prime}{q}}\Vert
_{E(\tau)}^{\frac{1}{q^\prime}},
\end{align*}
and this suffices to complete the proof of the inequality (\ref{eqnHoIneq}).
\end{proof}

Some special cases are worth noting explicitly.

\begin{cor}
\label{corhiai} \textbf {(Hiai~\cite{Hi1997} p.174 ; ~\cite{Al}, Theorem 7))}
Let $E$ be a  symmetric space on $[0,\infty)$ whose norm is monotone with respect to logarithmic submajorisation and suppose that $a,b,x\in
S(\tau)$, that  $r>0$ and that $p,q>1$ satisfy $1/p+1/q=1$. If 
$\vert a^*ax\vert^{\frac{rp}{2}}, \ \vert xbb^*\vert^{\frac{rq}{2}}
\in E(\tau)$ then $\vert axb\vert ^r\in E(\tau)$ and
\begin{equation}
\label{eqnHiai}
\Vert \ \vert axb\vert^r\Vert_{E(\tau)}
\leq  \Vert \ \vert a^*ax\vert ^{\frac{rp}{2}}\Vert_{E(\tau)}^{\frac{1}{p}}
\Vert \ \vert xbb^*\vert ^{\frac{rq}{2}}\Vert_{E(\tau)}^{\frac{1}{q}}.
\end{equation}
\end{cor}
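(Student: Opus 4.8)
The plan is to derive Corollary~\ref{corhiai} from Proposition~\ref{propKIp4} by specialising the parameters there and first reducing to the case of positive $a,b$. One must keep in mind that the exponents $p,q$ of the Corollary will play the role of the conjugate pair $p',q'$ of Proposition~\ref{propKIp4}, whereas the pair $p,q$ of that Proposition will be taken equal to $2$; so a little care with the labelling is needed, but nothing more than a substitution.

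First I would reduce to positive operators. Let $a=v\vert a\vert=\vert a^*\vert v$ and $b=w\vert b\vert=\vert b^*\vert w$ be the polar decompositions, so that $v,w\in\M$ are partial isometries, $\vert a^*\vert=(aa^*)^{1/2}$, $\vert b^*\vert=(bb^*)^{1/2}$, and
\[
axb=v\bigl(\vert a\vert\, x\,\vert b^*\vert\bigr)w .
\]
Since $\Vert v\Vert_\infty,\Vert w\Vert_\infty\le1$, it follows that $\mu(axb)\le\mu(\vert a\vert\, x\,\vert b^*\vert)$, hence $\mu(\vert axb\vert^r)\le\mu\bigl(\bigl\vert\,\vert a\vert\, x\,\vert b^*\vert\,\bigr\vert^r\bigr)$ for every $r>0$. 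Because $E(\tau)$ is symmetrically normed, this yields
\[
\bigl\Vert\,\vert axb\vert^r\,\bigr\Vert_{E(\tau)}\le\bigl\Vert\,\bigl\vert\,\vert a\vert\, x\,\vert b^*\vert\,\bigr\vert^r\,\bigr\Vert_{E(\tau)}
\]
(and $\vert axb\vert^r\in E(\tau)$) as soon as the operator on the right-hand side is known to lie in $E(\tau)$.

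Next I would apply Proposition~\ref{propKIp4} to the positive operators $\vert a\vert$ and $\vert b^*\vert$ in place of $a,b$, with the same $x$ and the same $r$, and with the choice $p=q=2$ there, so that the conjugate pair of Proposition~\ref{propKIp4} becomes $p'=p$, $q'=q$ in the notation of the Corollary; this is legitimate precisely because $p,q>1$ with $1/p+1/q=1$. Since $\vert a\vert^2=a^*a$ and $\vert b^*\vert^2=bb^*$, the hypotheses $\vert a^*ax\vert^{rp/2},\ \vert xbb^*\vert^{rq/2}\in E(\tau)$ are exactly those needed to invoke that Proposition, and its conclusion reads
\[
\bigl\Vert\,\bigl\vert\,\vert a\vert\, x\,\vert b^*\vert\,\bigr\vert^r\,\bigr\Vert_{E(\tau)}\le\bigl\Vert\,\vert a^*ax\vert^{\frac{rp}{2}}\,\bigr\Vert_{E(\tau)}^{\frac1p}\,\bigl\Vert\,\vert xbb^*\vert^{\frac{rq}{2}}\,\bigr\Vert_{E(\tau)}^{\frac1q},
\]
so in particular $\bigl\vert\,\vert a\vert\, x\,\vert b^*\vert\,\bigr\vert^r\in E(\tau)$, whence $\vert axb\vert^r\in E(\tau)$ by the previous display. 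Combining the two displayed inequalities gives (\ref{eqnHiai}).

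The only step needing real care is the opening reduction: one must take the polar decompositions in the form $a=v\vert a\vert$ and $b=\vert b^*\vert w$ so that both partial isometries sit on the outside of the product $\vert a\vert\, x\,\vert b^*\vert$, and one must verify that the factors $\vert a\vert^2x$ and $x\vert b^*\vert^2$ really are $a^*ax$ and $xbb^*$, so that the integrability hypotheses line up with those of Proposition~\ref{propKIp4}. The remainder is only the bookkeeping of matching the two conjugate pairs.
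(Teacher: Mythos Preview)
Your proof is correct and follows essentially the same route as the paper: apply Proposition~\ref{propKIp4} with $p=q=2$ (so that the conjugate pair $p',q'$ there becomes the $p,q$ of the Corollary) after replacing $a,b$ by the positive operators $\vert a\vert,\vert b^*\vert$. The only cosmetic difference is that the paper records the \emph{equality} $\mu(\vert axb\vert^r)=\mu(\vert\,\vert a\vert x\vert b^*\vert\,\vert^r)$ (since $\vert a\vert=v^*a$ and $\vert b^*\vert=bw^*$ give the reverse inequality as well), whereas you use only the one-sided inequality, which is all that is needed.
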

The Corollary follows immediately from Proposition~\ref{propKIp4} by taking $p=q=2$, then replacing $p^\prime,q^\prime$ by $p,q$ respectively
 and noting that
$$
\mu(\vert axb\vert ^r)=\mu^r(axb)=\mu^r(\vert a\vert x\vert b^*\vert )
=\mu(\vert \ \vert a\vert  x\vert b^*\vert\ \vert ^r).
$$

As noted in ~\cite{Hi1997}, two further special cases of interest are the
inequalities obtained by taking obtained $x={\bf 1}$, and $p=2$ 
respectively:
\begin{cor}
\label{corHiai3}
Let $E$ be a symmetric space on $[0,\infty)$ whose norm is monotone with respect to logarithmic submajorization. Suppose that $a,b,x\in S(\tau)$ and that $r>0$.
\begin{enumerate}
\item \quad Let $p,q>1$ satisfy $\frac{1}{p}+\frac{1}{q}=1$. If $\vert a\vert ^{rp}\in E(\tau)$ and $\vert b\vert ^{rq}\in E(\tau)$, then $\vert ab\vert ^r\in E(\tau)$ and
\begin{equation}
\label{eqnHa}
\Vert \ \vert ab\vert^r\Vert_{E(\tau)}
\leq  \Vert \ \vert a\vert ^{rp}\Vert_{E(\tau)}^{\frac{1}{p}}
\Vert \ \vert b\vert ^{rq}\Vert_{E(\tau)}^{\frac{1}{q}},
\end{equation}
\item \quad
If $\vert a^*ax\vert ^r \in E(\tau)$ and $\vert xbb^*\vert^r \in E(\tau)$ then 
\begin{equation}
\label{eqnHb}
\Vert \ \vert axb\vert^r\Vert_{E(\tau)}^2
\leq  \Vert \ \vert a^*ax\vert ^r\Vert_{E(\tau)}
\Vert \ \vert xbb^*\vert ^r\Vert_{E(\tau)}.
\end{equation}

\end{enumerate}

\end{cor}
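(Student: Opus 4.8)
The plan is to obtain both inequalities as direct specializations of Corollary~\ref{corhiai}, the only work being to match up moduli and adjoints via the elementary identity $\mu(\vert c\vert^{s})=\mu(c)^{s}=\mu(c^{*})^{s}$, valid for $c\in S(\tau)$ and $s>0$.

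For part~(1), I would set $x={\bf 1}$ in Corollary~\ref{corhiai}. Then $axb=ab$, while $a^{*}ax=\vert a\vert^{2}$ and $xbb^{*}=\vert b^{*}\vert^{2}$, so that $\vert a^{*}ax\vert^{rp/2}=\vert a\vert^{rp}$ and $\vert xbb^{*}\vert^{rq/2}=\vert b^{*}\vert^{rq}$. The only point that needs care is that the hypothesis and the conclusion of~(1) are phrased using $\vert b\vert^{rq}$ rather than $\vert b^{*}\vert^{rq}$; but $\mu(\vert b^{*}\vert^{rq})=\mu(b)^{rq}=\mu(\vert b\vert^{rq})$, so these two operators lie in $E(\tau)$ simultaneously and carry the same $E(\tau)$-norm. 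Substituting into the inequality of Corollary~\ref{corhiai} then gives exactly~(\ref{eqnHa}).

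For part~(2), I would set $p=q=2$ in Corollary~\ref{corhiai}, so that $rp/2=rq/2=r$ and the hypotheses reduce to $\vert a^{*}ax\vert^{r},\vert xbb^{*}\vert^{r}\in E(\tau)$. The conclusion of Corollary~\ref{corhiai} then reads
\begin{equation*}
\Vert\ \vert axb\vert^{r}\Vert_{E(\tau)}\leq\Vert\ \vert a^{*}ax\vert^{r}\Vert_{E(\tau)}^{1/2}\ \Vert\ \vert xbb^{*}\vert^{r}\Vert_{E(\tau)}^{1/2},
\end{equation*}
and squaring both sides yields~(\ref{eqnHb}).

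There is no genuine obstacle here: the analytic content is already carried by Proposition~\ref{propKIp4} (hence ultimately by the Weyl inequality of Theorem~\ref{thmFK} together with the Araki--Lieb--Thirring inequality of Proposition~\ref{propalt1}), and the present statement is pure bookkeeping. If one preferred to bypass Corollary~\ref{corhiai} and argue straight from Proposition~\ref{propKIp4}, the same substitutions work: replace $a,b$ by $\vert a\vert,\vert b^{*}\vert$ (both positive), keep $x$, take $p=p'=q=q'=2$, and use $\mu(\vert axb\vert^{r})=\mu(\vert \ \vert a\vert x\vert b^{*}\vert\ \vert^{r})$ to reduce to the positive case; for part~(1) one instead retains general $p,q$ and puts $x={\bf 1}$.
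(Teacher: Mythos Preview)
Your proof is correct and follows essentially the same approach as the paper: both parts are obtained from Corollary~\ref{corhiai} by taking $x={\bf 1}$ (using $\mu(bb^*)=\mu(b^*b)$ to pass from $\vert b^*\vert^{rq}$ to $\vert b\vert^{rq}$) and $p=q=2$, respectively.
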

\bigskip

The first inequality uses the identity $\mu(bb^*)=\mu(b^*b)$. The second
inequality in the case of trace ideals is due to
Bhatia and Davis ~\cite {BHDA1995} Theorem 1.

\begin{cor}
\label{corhiai2} Let  $E$ be a symmetric space on $[0,\infty)$ whose norm is monotone with respect to logarithmic submajorization and suppose that $0\leq a,b\in S(\tau)$, that $x\in S(\tau)$ and $r>0$. If $0\leq
\nu\leq 1$, and if $\vert ax\vert ^r, \ \vert xb\vert ^r\in E(\tau)$, then
$\vert a^\nu xb^{1-\nu}\vert ^r\in E(\tau)$
and 
\begin{equation}
\label{eqnHiai2}
\Vert  \vert a^\nu xb^{1-\nu}\vert ^r\Vert _{E(\tau)}
\leq \Vert \ \vert ax\vert ^r\Vert _{E(\tau)}^\nu 
\Vert \vert\ xb\vert ^r\Vert _{E(\tau)}^{1-\nu}
\end{equation}
\begin{equation}
\label{eqnHiai3}
\Vert  \vert a^\nu xb^{1-\nu}\vert ^r\Vert _{E(\tau)}
\Vert  \vert a^{1-\nu} xb^{\nu}\vert ^r\Vert _{E(\tau)}
\leq
\Vert \ \vert ax\vert ^r\Vert _{E(\tau)} 
\Vert \vert\  xb\vert ^r\Vert _{E(\tau)}
\end{equation}

\end{cor}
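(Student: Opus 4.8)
The plan is to deduce both inequalities from the general H\"older inequality of Proposition~\ref{propKIp4} by a suitable choice of the exponents there, and then to obtain \eqref{eqnHiai3} by symmetrising \eqref{eqnHiai2} in $\nu$ and $1-\nu$. I would first note that the endpoint cases $\nu=0$ and $\nu=1$ are trivial: when $\nu=1$ one has $a^\nu xb^{1-\nu}=axs(b)$ (with $s(b)\in\M$ the support projection of $b$), so $\mu(\vert a^\nu xb^{1-\nu}\vert^r)\leq\mu(\vert ax\vert^r)$ and \eqref{eqnHiai2} is immediate from the symmetric space property of $E(\tau)$; the case $\nu=0$ is the same with the roles of $a$ and $b$ interchanged. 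Hence it may be assumed that $0<\nu<1$.

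For $0<\nu<1$, put $p=p^\prime:=1/\nu$ and $q=q^\prime:=1/(1-\nu)$. Then $1<p,p^\prime<\infty$, and $q,q^\prime$ are the conjugate exponents of $p,p^\prime$ respectively, with $1/p+1/q=1$. I would apply Proposition~\ref{propKIp4} with $a$ replaced by $a^\nu\geq0$, $b$ replaced by $b^{1-\nu}\geq0$, and $x,r$ unchanged. Since $\nu p=1$, $(1-\nu)q=1$ and $p^\prime/p=q^\prime/q=1$, the two operators appearing in the hypothesis of Proposition~\ref{propKIp4} reduce to
$$\vert (a^\nu)^p x\vert^{\frac{rp^\prime}{p}}=\vert a^{\nu p}x\vert^{r}=\vert ax\vert^{r},\qquad \vert x(b^{1-\nu})^q\vert^{\frac{rq^\prime}{q}}=\vert xb^{(1-\nu)q}\vert^{r}=\vert xb\vert^{r},$$
which lie in $E(\tau)$ by hypothesis. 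Hence Proposition~\ref{propKIp4} applies and gives $\vert a^\nu xb^{1-\nu}\vert^r\in E(\tau)$ together with
$$\Vert\,\vert a^\nu xb^{1-\nu}\vert^r\Vert_{E(\tau)}\leq\Vert\,\vert ax\vert^{r}\Vert_{E(\tau)}^{1/p^\prime}\Vert\,\vert xb\vert^{r}\Vert_{E(\tau)}^{1/q^\prime}=\Vert\,\vert ax\vert^{r}\Vert_{E(\tau)}^{\nu}\Vert\,\vert xb\vert^{r}\Vert_{E(\tau)}^{1-\nu},$$
which is precisely \eqref{eqnHiai2}.

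Finally, \eqref{eqnHiai3} follows by applying \eqref{eqnHiai2} once as it stands and once with $1-\nu$ in place of $\nu$ (the hypotheses $\vert ax\vert^r,\vert xb\vert^r\in E(\tau)$ are unchanged), so that also $\Vert\,\vert a^{1-\nu}xb^{\nu}\vert^r\Vert_{E(\tau)}\leq\Vert\,\vert ax\vert^{r}\Vert_{E(\tau)}^{1-\nu}\Vert\,\vert xb\vert^{r}\Vert_{E(\tau)}^{\nu}$, and then multiplying the two estimates: the exponents of $\Vert\,\vert ax\vert^r\Vert_{E(\tau)}$ sum to $\nu+(1-\nu)=1$, and similarly for $\Vert\,\vert xb\vert^r\Vert_{E(\tau)}$. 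I do not expect a genuine obstacle here; the only point needing care is the bookkeeping of exponents in the substitution (requiring $\nu p=1$ and $(1-\nu)q=1$ at the same time forces $p=1/\nu$, $q=1/(1-\nu)$, which are indeed admissible conjugate exponents in $(1,\infty)$ precisely when $0<\nu<1$), together with the elementary separate treatment of the two endpoints.
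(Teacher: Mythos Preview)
Your proof is correct and follows essentially the same route as the paper: the paper states that \eqref{eqnHiai2} follows directly from Proposition~\ref{propKIp4} by replacing $a,b$ with $a^\nu,b^{1-\nu}$ and setting $p=p^\prime=1/\nu$, and that \eqref{eqnHiai3} then follows immediately, which is exactly your substitution and your symmetrisation argument. Your explicit handling of the endpoints $\nu\in\{0,1\}$ is a minor addition the paper leaves implicit.
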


The inequality ~(\ref{eqnHiai2}) follows directly from
Proposition~\ref{propKIp4} by replacing $a,b$ by $a^\nu,b^{1-\nu}$ respectively,
and setting $p,p^\prime  =1/\nu$. The inequality ~\ref{eqnHiai3} then follows
immediately.

For $n\times  n$ matrices, the proposition which now follows is due to Hiai and Zhan
~\cite{HZ}. See also ~\cite{HSh} in the case of non-commutative $L_p$-spaces.

\begin{prop}
\label{prophiai3}
Let  $E$ be a  symmetric space on $[0,\infty)$ whose norm is monotone with respect to logarithmic submajorization and suppose
that $x\in S(\tau)$, that $0\leq a,b\in S(\tau)$ and that $r>0$. If $0\leq
\nu\leq 1$, and if $\vert ax\vert^r,\ \vert xb\vert^r \in E(\tau)$,
then the
function
\begin{equation*}
f(\nu)=
\Vert  \vert a^\nu xb^{1-\nu}\vert ^r\Vert _{E(\tau)}
\Vert  \vert a^{1-\nu} xb^{\nu}\vert ^r\Vert _{E(\tau)},\quad \nu\in[0,1],
\end{equation*}
is convex on the interval $[0,1]$ and attains its minimum at $\nu=1/2$ and its
maximum at the points $\nu=0,1$. In particular, for all $\nu\in [0,1]$,
\begin{equation}
\label{eqnlcon}
\Vert \  \vert a^{\frac{1}{2}}xb^{\frac{1}{2}}\vert^r\Vert _{E(\tau)}^2
\leq 
\Vert  \vert a^\nu xb^{1-\nu}\vert ^r\Vert _{E(\tau)}
\Vert  \vert a^{1-\nu} xb^{\nu}\vert ^r\Vert _{E(\tau)}
\leq \Vert \ \vert ax\vert ^r\Vert _{E(\tau)}\Vert \ \vert xb\vert ^r\Vert
_{E(\tau)}.
\end{equation}
In addition, if $a^{\frac{1}{2}}xb^{\frac{1}{2}}\neq 0$, then $f$ is logarithmically convex on $[0,1]$.
\end{prop}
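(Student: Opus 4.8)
The plan is to deduce everything from the one-parameter H\"older-type inequalities (\ref{eqnHiai2}) and (\ref{eqnHiai3}) of Corollary~\ref{corhiai2} by a judicious change of operators. Write $g(\nu)=\|\,|a^\nu xb^{1-\nu}|^r\,\|_{E(\tau)}$ for $\nu\in[0,1]$, so that $f(\nu)=g(\nu)g(1-\nu)$. By the first assertion of Corollary~\ref{corhiai2} applied to $a,b,x$, we have $|a^\nu xb^{1-\nu}|^r\in E(\tau)$ for every $\nu\in[0,1]$, so $g$ is a finite non-negative function on $[0,1]$ and $f$ is well defined.

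The crux is to show that $g$ is logarithmically convex on $[0,1]$. Fix $0\le\nu_0<\nu_1\le1$ and set $\widetilde a=a^{\nu_1-\nu_0}$, $\widetilde b=b^{\nu_1-\nu_0}$, $\widetilde x=a^{\nu_0}xb^{1-\nu_1}$; these lie in $S(\tau)$ with $\widetilde a,\widetilde b\ge 0$. Using only that positive powers of $a$ (resp.\ of $b$) commute, a direct computation gives, for every $\widetilde\nu\in[0,1]$,
\[
\widetilde a^{\,\widetilde\nu}\,\widetilde x\,\widetilde b^{\,1-\widetilde\nu}=a^{(1-\widetilde\nu)\nu_0+\widetilde\nu\nu_1}\,x\,b^{\,1-((1-\widetilde\nu)\nu_0+\widetilde\nu\nu_1)},
\]
and in particular $\widetilde a\widetilde x=a^{\nu_1}xb^{1-\nu_1}$ and $\widetilde x\widetilde b=a^{\nu_0}xb^{1-\nu_0}$, so that $|\widetilde a\widetilde x|^r,|\widetilde x\widetilde b|^r\in E(\tau)$ (again by Corollary~\ref{corhiai2} for $a,b,x$). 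Applying inequality (\ref{eqnHiai2}) to the triple $(\widetilde a,\widetilde b,\widetilde x)$ with exponent $\widetilde\nu$ therefore gives
\[
g\big((1-\widetilde\nu)\nu_0+\widetilde\nu\nu_1\big)\le g(\nu_0)^{1-\widetilde\nu}\,g(\nu_1)^{\widetilde\nu},\qquad 0\le\widetilde\nu\le1,
\]
which is precisely the statement that $\log g$ is convex on $[0,1]$ (with values in $[-\infty,\infty)$; if one wishes to avoid conventions for $a^0,b^0$ one first restricts to $0<\nu_0<\nu_1<1$ and extends to the closed interval afterwards, using the bound $f\le f(0)$ below).

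Granting log-convexity of $g$, the remaining assertions are routine convex analysis. Since $\nu\mapsto\log g(1-\nu)$ is $\log g$ composed with an affine map, $\log f(\nu)=\log g(\nu)+\log g(1-\nu)$ is convex on $[0,1]$; hence $f=\exp(\log f)$ is log-convex, in particular convex, on $[0,1]$. The symmetry $f(\nu)=f(1-\nu)$ combined with convexity gives $f(1/2)\le\frac12 f(\nu)+\frac12 f(1-\nu)=f(\nu)$, so $f$ attains its minimum at $\nu=1/2$, where its value is $g(1/2)^2=\|\,|a^{1/2}xb^{1/2}|^r\,\|_{E(\tau)}^2$; and (\ref{eqnHiai3}) gives $f(\nu)\le g(0)g(1)=f(0)=f(1)$, so $f$ attains its maximum at the endpoints. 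Together these inequalities are the chain (\ref{eqnlcon}). Finally, if $a^{1/2}xb^{1/2}\ne 0$ then, since $\ker a=\ker a^{1/2}$ and $\ker b=\ker b^{1/2}$, one has $ax\ne 0$ and $xb\ne 0$, whence $g(0),g(1)\in(0,\infty)$; a convex function which is $-\infty$ at an interior point would be $-\infty$ on all of $(0,1)$ and in particular at $1/2$, so we conclude $g>0$ on $[0,1]$, $\log f$ is finite-valued, and $f$ is genuinely logarithmically convex.

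The only step requiring insight is discovering the substitution $(\widetilde a,\widetilde b,\widetilde x)=(a^{\nu_1-\nu_0},b^{\nu_1-\nu_0},a^{\nu_0}xb^{1-\nu_1})$ that upgrades the ``endpoint'' inequality (\ref{eqnHiai2}) into full log-convexity of $g$; after that, the verification and the bookkeeping (including the degenerate case $a^{1/2}xb^{1/2}=0$, in which $g\equiv 0$ on $(0,1)$ and $f$ is trivially convex) present no difficulty.
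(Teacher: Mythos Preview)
Your proof is correct and rests on the same substitution idea as the paper's, but the execution is slightly cleaner. Both arguments write $a^{t}xb^{1-t}$ as $\widetilde a^{\alpha}\,\widetilde x\,\widetilde b^{\beta}$ for suitably shifted operators and then apply one of the H\"older-type inequalities from Corollary~\ref{corhiai2}/\ref{corHiai3}. The paper uses the special case~(\ref{eqnHb}) (i.e.\ $p=q=2$) to obtain only the \emph{midpoint} log-convexity $g(t)^2\le g(t+s)g(t-s)$, and must then invoke the fact that a bounded midpoint-convex function is convex (citing Zaanen) to upgrade to genuine convexity of $f$. Your substitution $(\widetilde a,\widetilde b,\widetilde x)=(a^{\nu_1-\nu_0},b^{\nu_1-\nu_0},a^{\nu_0}xb^{1-\nu_1})$ together with the general-$\nu$ inequality~(\ref{eqnHiai2}) yields full log-convexity of $g$ in one stroke, so no separate midpoint-to-convex step is needed. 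Your handling of the degenerate case $a^{1/2}xb^{1/2}=0$ and of the positivity of $g$ in the nondegenerate case is also a little more explicit than the paper's. In short: same key lemma, same substitution trick, but you exploit the full strength of~(\ref{eqnHiai2}) rather than just its $p=2$ specialisation, which buys you a more direct argument.
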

\begin{proof} We observe first that it follows from (\ref{eqnHiai3}) that $f$ is bounded on $[0,1]$. Consequently, to show that $f$ is convex on $[0,1]$, it suffices to show that $f$ is mid-point convex. See, for example, \cite{Za}, Chapter 5, 25.3. To this end, suppose that $t\pm s\in 
[0,1]$ and 
using ~(\ref{eqnHb}), observe that
\begin{align*}
\Vert \ \vert a^txb^{1-t}\vert ^r\Vert _{E(\tau)}^2
&=\Vert \ \vert a^s(a^{t-s}xb^{1-t-s})b^s\vert ^r\Vert _{E(\tau)}^2\\
&\leq 
\Vert \ \vert a^{t+s}xb^{1-(t+s)}\vert ^r\Vert _{E(\tau)}\cdot
\Vert \  \vert a^{t-s}xb^{1-(t-s)}\vert ^r\Vert _{E(\tau)}
\end{align*}
and
\begin{align*}
\Vert \ \vert a^{1-t}xb^t\vert ^r\Vert _{E(\tau)}^2
&=\Vert \ \vert a^s(a^{1-t-s}xb^{t-s})b^s\vert ^r\Vert_{E(\tau)}^2\\
&\leq
\Vert \ \vert a^{1-(t-s)}xb^{t-s}\vert ^r\Vert _{E(\tau)}\cdot
\Vert \ \vert a ^{1-(t+s)}xb^{t+s}\vert ^r\Vert _{E(\tau)}.
\end{align*}
Consequently,
\begin{equation}
\label{eqncon}
2f(t)\leq 2\sqrt{f(t+s)f(t-s)}\leq f(t+s)+f(t-s).
\end{equation}

It now follows that $f((\cdot);a,b,x)$ is convex on $[0,1]$ whenever 
$\vert ax\vert^r,\ \vert xb\vert ^r
\in E(\tau)$. Since $f$ is clearly symmetric
about the point $1/2$, it follows that  $f$ attains its minimal value over [0,1] at
$1/2$ and its maximal value at the points $0,1$, and this establishes the first part of the proposition.
Finally, if   $a^{\frac{1}{2}}xb^{\frac{1}{2}}\neq 0$, then it follows from the equation (\ref{eqnlcon})
that the function $\log f$ is bounded on $[0,1]$.  It suffices to show that $\log f$ is midpoint convex. This, however, is immediate from the first inequality in (\ref{eqncon}).
\end{proof}

\begin{rem}
\label{rmkFS}We remark finally that there is a very substantial class of symmetric spaces $E\subseteq S(\tau)$ whose norm is monotone with respect to submajorization. Indeed, if $E$ is {\it fully symmetric} in the sense that, if $x\in E, y\in S(\tau)$ satisfy $y\prec\prec x$ then $y\in E$ and $\Vert y\Vert _E \leq \Vert x\Vert _E$, then it is easily seen via remark~\ref{rmkLS} that
the norm on $E$ is monotone with respect to logarithmic submajorization. Such spaces are precisely the exact interpolation spaces for the pair $(L_1(\tau),{\mathcal M})$. See~\cite{DDP1992},~\cite{DDP1993}. If $E\subseteq S(m)$ is a fully symmetric space on $[0,\infty)$, then it is follows directly that $E(\tau)\subset S(\tau)$ is fully symmetric. Important special cases occur if $E\subseteq S(m)$ is one of the familiar $L_p$-spaces, Orlicz spaces or  Lorentz spaces. In the case that ${\mathcal M}=B({\mathcal H})$, then all trace ideals in the sense of Gohberg and Krein~\cite {GK} are fully symmetric. While it is well-known~\cite{SSS} that there are symmetric spaces on the $[0,\infty)$  which are not closed subspaces of any fully symmetric space,  we do not have an explicit example of a symmetric space on $[0,\infty)$ whose norm is monotone with respect to logarithmic submajorization
but which is not fully symmetric. This seems to be a difficult problem.

\end{rem}

 \section{A Weyl-type inequality for uniform majorisation and H\"older inequalities}

The following direct consequence of the Weyl inequality given in Theorem~\ref{thmFK} and the implication (i)$\Longrightarrow$ (iii) of Proposition~\ref{propBr} extends  
\cite[Theorem 4.2(iii)]{FaKo1986}. 

\begin{thm}\label{WIp1} If $x,y\in L_{\log_+}(\tau)$, and if $f:(0,\infty)\to(0,\infty)$ is any  continuous
increasing  function $f:[0,\infty)\to [0,\infty)$ for which $f(0)=0$ and
$f\circ\exp$ is convex, then
$$f(|xy|)\prec\prec f(\mu(x)\mu(y)).$$
\end{thm}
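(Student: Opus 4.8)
The plan is to deduce Theorem~\ref{WIp1} directly from the Weyl inequality (Theorem~\ref{thmFK}) together with the equivalence of logarithmic submajorization and the inequalities~(iii) in Proposition~\ref{propBr}. First I would observe that by Theorem~\ref{thmFK} we have, for all $t>0$,
\begin{equation*}
\int_0^t\log\mu(s;xy)\,ds\leq \int_0^t\log\mu(s;x)\,ds+\int_0^t\log\mu(s;y)\,ds=\int_0^t\log\bigl(\mu(s;x)\mu(s;y)\bigr)\,ds,
\end{equation*}
which is exactly the statement that $|xy|\prec\prec_{\log}\mu(x)\mu(y)$, i.e.\ condition~(i) of Proposition~\ref{propBr} holds with $f(s)=\mu(s;xy)$ and $g(s)=\mu(s;x)\mu(s;y)$.

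The second step is to verify that the hypotheses of Proposition~\ref{propBr} are met, namely that both $\mu(xy)$ and $\mu(x)\mu(y)$ are non-increasing functions lying in $L_{\log_+}$. Since $x,y\in L_{\log_+}(\tau)$, Proposition~\ref{propFK} gives $xy\in L_{\log_+}(\tau)$, so $\mu(xy)\in L_{\log_+}$ by~(\ref{eqnFK2}); and the product $\mu(x)\mu(y)$ is non-increasing and satisfies $\log_+(\mu(x)\mu(y))\leq\log_+\mu(x)+\log_+\mu(y)\in L_1+L_\infty$, hence also lies in $L_{\log_+}$. Then the implication (i)$\Longrightarrow$(iii) of Proposition~\ref{propBr}, applied to the given function $f$ (which is continuous, increasing, vanishes at $0$, and has $f\circ\exp$ convex), yields
\begin{equation*}
\int_0^t f\bigl(\mu(s;xy)\bigr)\,ds\leq \int_0^t f\bigl(\mu(s;x)\mu(s;y)\bigr)\,ds,\quad \forall t>0.
\end{equation*}

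Finally I would translate this back into the language of submajorization of $\tau$-measurable operators. By~(\ref{eqn001}) we have $f(\mu(s;xy))=\mu(s;f(|xy|))$ since $f$ is increasing, continuous and $f(0)=0$; likewise $f(\mu(s;x)\mu(s;y))=\mu(s;f(\mu(x)\mu(y)))$, interpreting $\mu(x)\mu(y)$ as a function in $S(m)$ which is already non-increasing. Hence the displayed inequality reads $\int_0^t\mu(s;f(|xy|))\,ds\leq\int_0^t\mu(s;f(\mu(x)\mu(y)))\,ds$ for all $t>0$, which is precisely $f(|xy|)\prec\prec f(\mu(x)\mu(y))$. I do not anticipate a serious obstacle here: the content is all packaged in the preceding results, and the only point requiring a little care is the bookkeeping around applying~(\ref{eqn001}) to the already-rearranged function $\mu(x)\mu(y)$ and confirming that the $L_{\log_+}$ membership hypotheses of Proposition~\ref{propBr} hold for both functions involved.
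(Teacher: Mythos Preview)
Your proposal is correct and follows exactly the route the paper indicates: the Weyl inequality (Theorem~\ref{thmFK}) gives condition~(i) of Proposition~\ref{propBr} for the pair $\mu(xy)$ and $\mu(x)\mu(y)$, and then the implication (i)$\Longrightarrow$(iii) yields the desired submajorization. Your additional bookkeeping---checking that $\mu(xy)$ and $\mu(x)\mu(y)$ lie in $L_{\log_+}$ and invoking~(\ref{eqn001}) to pass from $f(\mu(\cdot))$ to $\mu(f(\cdot))$---is appropriate and fills in details the paper leaves implicit.
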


We strengthen this as follows, using the notion of {\it uniform Hardy-Littlewood
majorization} introduced in ~\cite{KS};see also \cite{LSZ} Chapter 3.4. If $x,y\in S(\tau)$ then $x$ is said to be {\it
strongly majorized by } $y$, written $x\lhd y$, if there exists $\lambda >0 $ such that
\begin{equation*}
\int_{\lambda a}^b\mu(x)dm \leq \int _a^b\mu(y)dm,\quad 0<\lambda a\leq b.
\end{equation*}
It is the case that $x\prec\prec y $ if $x\lhd y$, but the converse is not valid. We shall
need the following fundamental result, which is obtained by combining ~\cite{KS}
Proposition 4.3 with \cite{KS}Theorem 6.3.

\begin{thm}
\label{thmKS} Let $E\subseteq S(m)$ be a symmetric Banach space on $[0,\infty)$. If $y\in
E(\tau)$ and if $x\in S(\tau)$ satisfies $x\lhd y$ then $x\in E(\tau)$ and 
$\Vert x\Vert _{E(\tau)}\leq \Vert y\Vert _{E(\tau)}$
\end{thm}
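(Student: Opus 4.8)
The plan is to reduce the assertion to a statement about non-increasing functions on $[0,\infty)$ and then to invoke the function-space version, which is the substantial part of \cite{KS}. Since $\Vert x\Vert_{E(\tau)}=\Vert\mu(x)\Vert_E$ and $\Vert y\Vert_{E(\tau)}=\Vert\mu(y)\Vert_E$ by the very definition of the space $E(\tau)$, and since the relation $x\lhd y$ depends only on the singular value functions (indeed $x\lhd y$ holds in $S(\tau)$ if and only if $\mu(x)\lhd\mu(y)$ holds in $S(m)$), the theorem is equivalent to its commutative counterpart: if $f,g\in S(m)$ are non-increasing, $f\lhd g$ and $g\in E$, then $f\in E$ and $\Vert f\Vert_E\le\Vert g\Vert_E$. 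This is precisely the combination of \cite[Proposition~4.3]{KS} and \cite[Theorem~6.3]{KS} referred to above.

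For the commutative statement I would argue as follows. First, since enlarging the dilation constant only weakens the defining inequalities of $\lhd$, one may assume $\lambda>1$. The aim is then to construct, for each $\varepsilon>0$, a non-increasing majorant $h_\varepsilon\ge f^*$ with $\Vert h_\varepsilon\Vert_E\le(1+\varepsilon)\Vert g^*\Vert_E$; once this is done the conclusion follows at once, because $h_\varepsilon\in E$ forces $f^*\in E$ by the ideal property of $E$, and then $\Vert f^*\Vert_E\le\Vert h_\varepsilon\Vert_E$ by rearrangement-invariance (both functions being non-increasing), whence $\Vert f\Vert_E\le\inf_\varepsilon\Vert h_\varepsilon\Vert_E\le\Vert g^*\Vert_E$. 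The majorant $h_\varepsilon$ is built from the family of inequalities $\int_{\lambda a}^b f^*\,dm\le\int_a^b g^*\,dm$, read along a geometric partition of the half-line with common ratio $\lambda^{1/m}$ for large integer $m$: each block of the partition yields a bound on a block-average of $f^*$ in terms of an average of $g^*$ over a dilated source block, and reassembling these bounds exhibits $h_\varepsilon$ as dominated by a superposition of rescaled and rearranged pieces of $g^*$ whose supports can be taken essentially disjoint. The ``room'' supplied by the dilation constant $\lambda$ is precisely what makes it possible to arrange the covering using only operations under which every symmetric Banach space is contractive; letting the partition refine drives the loss $\varepsilon$ to $0$.

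The main obstacle is exactly that one cannot simply invoke ordinary Hardy--Littlewood--Polya submajorisation. Although $f\lhd g$ does imply $f\prec\prec g$ (let the left endpoint tend to $0$), the passage $f\prec\prec g\Rightarrow\Vert f\Vert_E\le\Vert g\Vert_E$ is available only when $E$ is \emph{fully} symmetric, i.e.\ an exact interpolation space for the couple $(L_1,L_\infty)$, and there do exist symmetric Banach function spaces that are not of this form (see Remark~\ref{rmkFS}). What makes the argument work for an \emph{arbitrary} symmetric Banach space --- and, crucially, with the sharp constant $1$ --- is that the stronger relation $\lhd$ allows the relevant domination to be realised through restrictions, measure-preserving rearrangements and disjointly supported recombinations alone, bypassing the Calder\'on--Mityagin/interpolation machinery that plain submajorisation would require. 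The delicate technical point is the bookkeeping in the block-by-block construction that guarantees the pieces of $g^*$ used across different blocks do not, in aggregate, exceed a single rearranged copy of $g^*$; the reduction to $S(m)$ and the limiting step are then routine.
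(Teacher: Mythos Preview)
Your reduction is exactly what the paper does: the theorem is stated without proof and attributed to the combination of \cite[Proposition~4.3]{KS} and \cite[Theorem~6.3]{KS}, and your first paragraph correctly explains why the noncommutative statement collapses immediately to the commutative one via $\Vert x\Vert_{E(\tau)}=\Vert\mu(x)\Vert_E$ and the fact that $x\lhd y$ is a relation between $\mu(x)$ and $\mu(y)$. The remaining two paragraphs, where you sketch how the Kalton--Sukochev argument for the commutative case might go, are additional content not present in the paper (which simply cites the result); your outline of the geometric-partition construction and the explanation of why full symmetry cannot be assumed are reasonable heuristics for that argument, but since the paper itself offers no proof there is nothing further to compare.
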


Our proof of the following theorem uses the approach from the proof of 
\cite{S} Theorem 7.

\begin{thm} 
\label{thmWI} If $x,y\in L_{\log_+}(\tau)$ and  $f:[0,\infty)\to[0,\infty)$ is an increasing and continuous function for which $f(0)=0$ and 
 $f\circ\exp$ is convex, then
$$f(|xy|)\lhd f(\mu(x)\mu(y)).$$
\end{thm}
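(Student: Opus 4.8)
The plan is to upgrade the ordinary submajorization already furnished by Theorem~\ref{WIp1} to uniform majorization, using in an essential way that the Weyl inequality of Theorem~\ref{thmFK} holds at \emph{every} level $t$, together with a pointwise dilation estimate. After the routine reduction to $x,y\ge 0$ (replace $x,y$ by $|x|,|y^*|$: this changes neither $\mu(xy)$ nor $\mu(x)\mu(y)$, by the identity $\mu(xy)=\mu(|x|\,|y^*|)$ recorded above and $\mu(y)=\mu(y^*)$), I would work with the two non-increasing functions $u:=f(\mu(xy))$ and $v:=f(\mu(x)\mu(y))$ on $(0,\infty)$.

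Two facts would then be recorded. First, from the Weyl inequality $\int_0^t\log\mu(s;xy)\,ds\le\int_0^t\log(\mu(s;x)\mu(s;y))\,ds$ and the implication (i)$\Rightarrow$(iii) of Proposition~\ref{propBr}, applied to $\varphi=f^{\,p}$ (which for every $p\ge 1$ is continuous, increasing, vanishes at $0$, and satisfies $\varphi\circ\exp=(f\circ\exp)^p$ convex), one obtains the family of submajorizations $u^p\prec\prec v^p$, $p\ge 1$; the case $p=1$ is exactly Theorem~\ref{WIp1}. Second, the dilation estimate~(\ref{eqnDil}) gives $\mu(2t;xy)\le\mu(t;x)\mu(t;y)$, whence, $f$ being increasing, $u(2t)\le v(t)$ for all $t>0$, i.e. $u\le\sigma_2 v$ pointwise.

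The heart of the argument---carried out along the lines of the proof of \cite{S}, Theorem~7---is the purely real-variable assertion that these two properties force $u\lhd v$ with an absolute choice of the majorization constant (e.g. $\lambda=4$, depending only on the dilation factor $2$). Given $0<\lambda a\le b$, one splits $[\lambda a,b]$ into a part adjacent to $\lambda a$ and a tail; on the tail one inserts the pointwise bound $u(s)\le v(s/2)$ and changes variables, which introduces a Jacobian factor $2$, and this surplus factor is absorbed by playing it off against the submajorizations $u^p\prec\prec v^p$ (the power versions being needed precisely in the range where $v$ decays rapidly, where a single application of $\prec\prec$ wastes too much); on the remaining part one compares directly via $u\prec\prec v$. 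Assembling the estimates and optimizing the split yields $\int_{\lambda a}^b u\le\int_a^b v$, which is $u\lhd v$.

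I expect the main obstacle to be precisely this last step. Neither the pointwise dilation bound alone (it produces an uncontrollable constant after the change of variables) nor the submajorization alone (it discards the overlap $\int_0^{\lambda a}u$, for which there is no usable lower bound) suffices; the two---reinforced by the $p$-th power strengthenings---must be interleaved, and the bookkeeping, in particular fixing $\lambda$ and handling the several positions of $b$ relative to $\lambda a$, is where the work lies. Once $u\lhd v$ is established the theorem follows, and, via Theorem~\ref{thmKS}, it promotes the Hölder-type inequalities of Section~5 from fully symmetric to arbitrary symmetric Banach spaces, recovering in particular the generalized Hölder inequalities of Bekjan--Ospanov and Albadawi.
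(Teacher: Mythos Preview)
Your proposal has a genuine gap at exactly the point you flag: the ``purely real-variable assertion'' that the pointwise dilation bound $u\le\sigma_2 v$ together with the family $u^p\prec\prec v^p$ ($p\ge 1$) forces $u\lhd v$ is never actually carried out, and I do not see how to carry it out. The change of variables in the dilation bound produces a Jacobian factor $2$ multiplying $\int v$, not a shift in the domain of integration; the power submajorizations, on the other hand, control only full integrals $\int_0^t$ and say nothing about tail integrals $\int_{\lambda a}^b$. There is no evident mechanism by which these two kinds of information combine to absorb the surplus constant and yield $\int_{\lambda a}^b u\le\int_a^b v$ with a fixed $\lambda$. Your appeal to the method of \cite{S}, Theorem~7 is misplaced: that argument (which the paper follows) is operator-theoretic, not a post-hoc manipulation of the scalar functions $u,v$.

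The paper's proof proceeds quite differently. Given $0<2a<b$, one chooses spectral projections $p,q$ with $\tau(p)=\tau(q)=a$ carrying the top parts of $|x|$ and $|y|$, so that $\mu(|x|({\bf 1}-p))=\mu(\cdot+a;x)$ and similarly for $y$. Decomposing $xy=xpy+x({\bf 1}-p)yq+x({\bf 1}-p)y({\bf 1}-q)$ and observing that the first two summands have singular values vanishing beyond $a$, one obtains the pointwise shift $\mu(t;xy)\le\mu(t-2a;\,x({\bf 1}-p)y({\bf 1}-q))$ for $t>2a$. Now Theorem~\ref{WIp1} is applied, not to $x,y$, but to the cut-down operators $x({\bf 1}-p)$ and $y({\bf 1}-q)$, yielding directly
\[
\int_{2a}^b f(\mu(xy))\,dm\le\int_0^{b-2a}f\bigl(\mu(\cdot+a;x)\mu(\cdot+a;y)\bigr)\,dm=\int_a^{b-a}f(\mu(x)\mu(y))\,dm\le\int_a^{b}f(\mu(x)\mu(y))\,dm,
\]
which is the uniform majorization with $\lambda=2$. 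The idea you are missing is that the shift by $a$ required for $\lhd$ should be manufactured by a spectral cut-off at the operator level, after which a single application of the ordinary Weyl submajorization on the remainder finishes the job.
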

\begin{proof} 
Let $a,b$ be positive scalars with $0\leq 2a<b$. It will suffice 
to show that
$$
\int_{2a}^b f(\mu(xy)dm\leq \int_a^bf(\mu(x)\mu(y))dm.
$$
Without loss of generality, it may be assumed that ${\mathcal M}$
is non-atomic.  By ~\cite{DDP1989a} Lemma 2.4), there exist projections $p,q\in P({\mathcal M})$
with $\tau(p)=\tau(q)=a$ such that
$$
e^{\vert x\vert}\left ( (\mu(a;x),\infty)\right)\leq p
\leq 
e^{\vert x\vert}\left ( [\mu(a;x),\infty)\right),$$
$$e^{\vert y\vert}\left( (\mu(a;y),\infty)\right)\leq q
\leq 
e^{\vert y\vert}\left( [\mu(a;y),\infty)\right),
$$ 
$$
\mu(\vert x\vert p)=\mu(x)\chi_{[0,\tau(p))}=\mu(x)\chi_{[0,a)},\quad 
\mu(\vert y\vert q)=\mu(x)\chi_{[0,\tau(q))}=\mu(y)\chi_{[0,a)},
$$
and
$$
\mu(\vert x\vert ({\bf 1}-p))=\mu(\cdot +a,x),\quad
\mu(\vert y\vert ({\bf 1}- q))=\mu(\cdot +a, y)
$$
Observe that, for all $t>2a$,
$$
\mu(t;xy)\leq \mu(a;xpy)+\mu(a;x({\bf 1}-p)yq)
+\mu(t-2a;x({\bf 1}-p)y({\bf 1}-q)).
$$
Using~\cite{FaKo1986} Lemma 2.5 (vii), observe that, for all $\epsilon>0$,
\begin{equation*}
\mu(a+\epsilon;xpy)
\leq \mu(a+\epsilon;\vert x\vert py)\leq \mu(a;\vert x\vert p)\mu(\epsilon;y)=0
\end{equation*}
and
\begin{equation*}
\mu(a+\epsilon;x({\bf 1}-p)yq)
\leq \mu(\epsilon;x({\bf 1}-p))\mu(a;\vert y\vert q)=0.
\end{equation*}
By right-continuity of the singular value function, it follows that
$$
\mu(a;xpy)=\mu(a;x({\bf 1}-p)yq)=0
$$
and so
$$
\mu(t;xy)\leq 
\mu(t-2a;x({\bf 1}-p)y({\bf 1}-q))\quad t>2a.
$$
Consequently,
\begin{align*}
\int_{2a}^b f(\mu(xy))dm
&\leq \int_{2a}^b f(\mu(\cdot-2a;x({\bf 1}-p)y({\bf 1}-q)dm
\leq \int_0^{b-2a}f(\mu(x({\bf 1}-p)y({\bf 1}-q))dm\\
&\leq \int_0^{b-2a}f(\mu(x({\bf 1}-p))\mu(y({\bf 1}-q))dm \quad 
\quad {\rm using\ Theorem~\ref{WIp1}} \\
&\leq 
%\int_0^{b-2a}f(\mu(x({\bf 1}-p))\mu(y({\bf 1}-q)))dm 
\int_0^{b-2a} f(\mu(\cdot  +a;x)\mu(\cdot+a ;y))dm\\
&=\int  _a^{b-a}f(\mu(x)\mu(y))dm 
\leq \int  _{a}^{b}f(\mu(x)\mu(y))dm 
\end{align*}

This concludes the proof.
\end{proof}

\begin{cor}
\label{corb1} (cf.~\cite{BO}, Lemma 4) If $x,y\in L_{\log_+}(\tau)$
and  $r>0$ then
$$
\vert xy\vert ^r\lhd \mu(x)^r\mu(y)^r.
$$
\end{cor}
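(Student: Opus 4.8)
The plan is to deduce Corollary~\ref{corb1} directly from Theorem~\ref{thmWI} by choosing the test function $f$ appropriately. Specifically, for $r>0$ set $f(s)=s^r$ for $s\geq 0$. This function is continuous and increasing on $[0,\infty)$ with $f(0)=0$, and $f\circ\exp(t)=e^{rt}$ is convex, so the hypotheses of Theorem~\ref{thmWI} are satisfied. Applying the theorem to $x,y\in L_{\log_+}(\tau)$ then yields
\[
\mu(|xy|)^r = f(|xy|)\lhd f(\mu(x)\mu(y))=\mu(x)^r\mu(y)^r,
\]
which, after recalling that $|xy|^r$ has singular value function $\mu(|xy|)^r$ and unwinding the definition of $\lhd$, is precisely the assertion $|xy|^r\lhd \mu(x)^r\mu(y)^r$.

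First I would check carefully that $|xy|^r$ makes sense as an element of $L_{\log_+}(\tau)$: by Proposition~\ref{propFK} the product $xy$ lies in $L_{\log_+}(\tau)$, hence so does $|xy|$ by the remarks following the definition of the algebra, and since $\mu(|xy|^r)=\mu(xy)^r$ the equality $\Lambda(|xy|^r)=\Lambda(xy)^r$ (property 1 of the determinant function) shows $|xy|^r\in L_{\log_+}(\tau)$ as well. Similarly $\mu(x)^r\mu(y)^r$ is the pointwise product of two decreasing functions in $L_{\log_+}$, and one verifies it lies in $L_1+L_\infty$ (indeed in $L_{\log_+}$) directly from the hypothesis $x,y\in L_{\log_+}(\tau)$. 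None of this requires any new argument beyond what is already assembled in Section 3.

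There is essentially no obstacle here: the corollary is a pure specialization of Theorem~\ref{thmWI}. The only point requiring a line of care is bookkeeping with the identity $\mu(t;|xy|^r)=\mu(t;xy)^r$, valid since $t\mapsto t^r$ is continuous and increasing on $[0,\infty)$ with value $0$ at $0$ (equation~(\ref{eqn001})), so that the strong majorization $f(|xy|)\lhd f(\mu(x)\mu(y))$ transcribes verbatim into $|xy|^r\lhd \mu(x)^r\mu(y)^r$. Thus the proof is a two-line invocation: apply Theorem~\ref{thmWI} with $f(\cdot)=(\cdot)^r$ and rewrite.
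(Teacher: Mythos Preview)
Your proposal is correct and takes exactly the same approach as the paper: apply Theorem~\ref{thmWI} with $f(t)=t^r$, noting that this $f$ is continuous, increasing, vanishes at $0$, and has $f\circ\exp$ convex. The extra bookkeeping you supply (membership of $|xy|^r$ in $L_{\log_+}(\tau)$ and the identity $\mu(|xy|^r)=\mu(xy)^r$) is harmless elaboration; the paper's own proof is the single line ``take $f(t)=t^r$ in Theorem~\ref{thmWI}.''
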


\begin{proof}
The inequality follows immediately from Theorem~\ref{thmWI} by taking $f(t)=t^r, \
t>0$. 

\end{proof}

A general  H\"older inequality for arbitrary symmetric spaces now follows.
\begin{cor}
\label{corHo}
Suppose that $E\subseteq S(m)$ is a  symmetric space, that 
$r,p,q>0$ and that $1/r=1/p+1/q$. If $\vert x\vert ^p,\ \vert
y\vert ^q\in E(\tau)
$, then
$\vert xy\vert ^r\in E(\tau)$ and 
$$
\Vert \vert xy \vert ^r\Vert _{E(\tau)}^{\frac{1}{r}}
\leq \Vert \vert x \vert ^p\Vert _{E(\tau)}^{\frac{1}{p}}
\Vert\  \vert y \vert ^q\Vert _{E(\tau)}^{\frac{1}{q}}
$$
\end{cor}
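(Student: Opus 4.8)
The plan is to combine the uniform-majorization version of the Weyl inequality (Corollary~\ref{corb1}) with the fact (Theorem~\ref{thmKS}) that symmetric Banach spaces $E(\tau)$ are order-preserving under the relation $\lhd$, and then to run the usual Young-inequality argument. First I would record that, since $E(\tau)$ is a symmetric space, $E(\tau)\subseteq L_1(\tau)+{\mathcal M}\subseteq L_{\log_+}(\tau)$; hence $\vert x\vert^p,\vert y\vert^q\in L_{\log_+}(\tau)$, and therefore $x,y\in L_{\log_+}(\tau)$ as well (apply the remarks in Section 3 showing that $z\geq0$, $z\in L_{\log_+}(\tau)$ forces $z^\alpha\in L_{\log_+}(\tau)$ for every $\alpha>0$). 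This legitimizes the application of Corollary~\ref{corb1}.

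Next I would apply Corollary~\ref{corb1} with the given $r>0$ to obtain
\begin{equation*}
\vert xy\vert^r\lhd \mu(x)^r\mu(y)^r=\mu(\vert x\vert^r)\mu(\vert y\vert^r)
\quad\text{in }S(\tau).
\end{equation*}
So it suffices to show that the function $\mu(\vert x\vert^r)\mu(\vert y\vert^r)$ lies in $E(\tau)$ with the claimed norm bound, and then invoke Theorem~\ref{thmKS} to transfer this to $\vert xy\vert^r$. Since $\mu(\vert x\vert^r)$ and $\mu(\vert y\vert^r)$ are decreasing functions on $[0,\infty)$, I work on the commutative side with $f=\mu(\vert x\vert^r)$, $g=\mu(\vert y\vert^r)\in S(m)$, and note $f^{p/r}=\mu(\vert x\vert^p)\in E$, $g^{q/r}=\mu(\vert y\vert^q)\in E$. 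The numerical Young inequality with exponents $p/r$ and $q/r$ (whose conjugacy is exactly $r/p+r/q=1$) gives, pointwise,
\begin{equation*}
f\,g\leq \frac{r}{p}f^{p/r}+\frac{r}{q}g^{q/r}\in E,
\end{equation*}
so $fg\in E$, hence $\mu(\vert x\vert^r)\mu(\vert y\vert^r)\in E(\tau)$, and by Theorem~\ref{thmKS} also $\vert xy\vert^r\in E(\tau)$ with $\Vert\,\vert xy\vert^r\Vert_{E(\tau)}\leq\Vert \mu(\vert x\vert^r)\mu(\vert y\vert^r)\Vert_{E(\tau)}$.

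For the quantitative bound I would essentially repeat the proof of Proposition~\ref{propHo}: after the harmless normalization $\Vert\,\vert x\vert^p\Vert_{E(\tau)}=\Vert\,\vert y\vert^q\Vert_{E(\tau)}=1$, apply Young's inequality, the triangle inequality in $E(\tau)$, and the definition of the norm on $E(\tau)$ via $\mu$ to get $\Vert\,\vert xy\vert^r\Vert_{E(\tau)}\leq \frac{r}{p}+\frac{r}{q}=1$; undoing the normalization yields
\begin{equation*}
\Vert\,\vert xy\vert^r\Vert_{E(\tau)}^{1/r}
\leq \Vert\,\vert x\vert^p\Vert_{E(\tau)}^{1/p}\Vert\,\vert y\vert^q\Vert_{E(\tau)}^{1/q}.
\end{equation*}
The only genuinely non-routine point is the passage from submajorization-type control of singular values to an actual inequality \emph{in the space} $E(\tau)$ for an arbitrary symmetric (not necessarily fully symmetric) Banach space; this is precisely what Theorem~\ref{thmKS} supplies via the stronger relation $\lhd$ coming from Corollary~\ref{corb1}, and is the reason the uniform-majorization refinement of the Weyl inequality is needed here rather than plain submajorization.
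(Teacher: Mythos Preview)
Your proposal is correct and follows essentially the same route as the paper: verify $x,y\in L_{\log_+}(\tau)$ via $E(\tau)\subseteq L_1(\tau)+\mathcal M\subseteq L_{\log_+}(\tau)$, apply Corollary~\ref{corb1} to get $\vert xy\vert^r\lhd \mu(x)^r\mu(y)^r$, use the Young-inequality argument (i.e., Proposition~\ref{propHo}) on the commutative side to place $\mu(x)^r\mu(y)^r$ in $E$ with the right norm bound, and finish with Theorem~\ref{thmKS}. The only cosmetic difference is that you spell out the Young step twice whereas the paper simply cites Proposition~\ref{propHo}.
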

\begin{proof} 
By assumption, $\mu(x)^p\in E,\mu(y)^q\in E $ and so by Proposition~\ref{propHo}
$\mu(x)^r\mu(y)^r\in E$. The assumptions $\vert x\vert ^p,\ \vert y\vert ^q \in E(\tau)\subseteq L^1(\tau)+{\mathcal M}$ imply further that $\vert x\vert ^p,\vert y\vert ^p\in L_{\log_+}(\tau)$ and so also $x,y\in L_{\log_+}(\tau)$.

By Corollary \ref{corb1}, the uniform submajorization
$$
\vert xy\vert ^r\lhd \mu(x)^r\mu(y)^r
$$
together with Theorem~\ref{thmKS} yields that $\vert xy\vert ^r\in E$ and 
$$
\Vert \vert xy\vert ^r\Vert _{E(\tau)}\leq \Vert \mu(x)^r\mu(y)^r\Vert _E
$$
The assertion of the corollary now follows from Proposition~\ref{propHo}.

\end{proof}

The preceding Corollary~\ref{corHo} is given in ~\cite{BO} Corollary 1 and in \cite{S} Theorem 1 in the case that $r=1$. The
present approach  
is much more direct than that given in ~\cite{BO}.

Let $E\subseteq S(m)$ be a symmetric space and let $x\in S(\tau)$.  For ease of presentation, it will 
now be convenient to write $\Vert x\Vert _{E(\tau)}<\infty$ if $x\in E(\tau)$ and 
 $\Vert x\Vert _{E(\tau)}=+\infty$ otherwise. 
\begin{prop}
\label{propHo2} 
Suppose that $E\subseteq S(m)$ is a symmetric space.
Let $r>0$ and suppose that $1<p,q<\infty$ satisfy $1/p+1/q=1$.
Let $\phi,\psi:(0,\infty)\to (0,\infty) $ be  Borel functions 
such that $\phi(\lambda)\psi(\lambda)=\lambda ,\ \lambda>0$. If $a,b,x\in
S(\tau)$, then
\begin{equation}
\label{eqnHo2}
\Vert \vert a^*xb\vert ^r\Vert _{E(\tau)}
\leq \Vert \left (a^*\phi^2(\vert x^*\vert)a\right)^{\frac{pr}{2}}\Vert
_{E(\tau)}^{\frac{1}{p}}
 \Vert \left (b^*\psi^2(\vert x\vert)b\right)^{\frac{qr}{2}}\Vert
_{E(\tau)}^{\frac{1}{q}},
\end{equation}
provided the right hand side is finite.

\end{prop}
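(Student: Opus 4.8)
The plan is to derive (\ref{eqnHo2}) by combining the generalised polar decomposition of Lemma~\ref{lemGMMN} with the H\"older inequality of Corollary~\ref{corHo}. The point to stress is that one must invoke Corollary~\ref{corHo} here rather than Proposition~\ref{propKIp4}: the present statement assumes only that $E$ is a symmetric space, with no monotonicity with respect to logarithmic submajorisation, whereas Corollary~\ref{corHo} --- being obtained via uniform majorisation and Theorem~\ref{thmKS} --- is available for \emph{every} symmetric Banach space.

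First I would dispose of the trivial case: if the right-hand side of (\ref{eqnHo2}) is $+\infty$ there is nothing to prove, so assume it is finite. Extend $\phi,\psi$ to Borel functions on $\mathbb R$ with $\phi(\lambda)\psi(\lambda)=\lambda$ (say $\phi(\lambda)=\psi(\lambda)=0$ for $\lambda\le0$), write the polar decomposition $x=u\vert x\vert$, and apply Lemma~\ref{lemGMMN} to obtain
$$
a^*xb=\bigl(a^*\phi(\vert x^*\vert)u\bigr)\bigl(\psi(\vert x\vert)b\bigr)=:yz .
$$
The finiteness hypothesis forces $a^*\phi^2(\vert x^*\vert)a=\bigl(\phi(\vert x^*\vert)a\bigr)^*\bigl(\phi(\vert x^*\vert)a\bigr)\in S(\tau)$ and, likewise, $b^*\psi^2(\vert x\vert)b\in S(\tau)$, so that $y,z\in S(\tau)$ and the displayed factorisation is legitimate.

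The next step is two singular-value computations. Since $z^*z=b^*\psi^2(\vert x\vert)b$, one has the operator identity $\vert z\vert^{qr}=\bigl(b^*\psi^2(\vert x\vert)b\bigr)^{qr/2}$; in particular $\vert z\vert^{qr}\in E(\tau)$ and $\Vert\,\vert z\vert^{qr}\Vert_{E(\tau)}=\Vert\,(b^*\psi^2(\vert x\vert)b)^{qr/2}\Vert_{E(\tau)}$. For $y=a^*\phi(\vert x^*\vert)u$, using $\mu(uxv)\le\Vert u\Vert_\infty\Vert v\Vert_\infty\mu(x)$ together with $\mu(w)=\mu(w^*)$ and $\mu(f(w))=f(\mu(w))$ for $0\le w$ gives
$$
\mu(y)\le\mu\bigl(a^*\phi(\vert x^*\vert)\bigr)=\mu\bigl(\phi(\vert x^*\vert)a\bigr)=\mu\bigl((a^*\phi^2(\vert x^*\vert)a)^{1/2}\bigr),
$$
and hence $\mu(\vert y\vert^{pr})=\mu(y)^{pr}\le\mu\bigl((a^*\phi^2(\vert x^*\vert)a)^{pr/2}\bigr)$. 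As $(a^*\phi^2(\vert x^*\vert)a)^{pr/2}\in E(\tau)$ and $E$ is symmetric, it follows that $\vert y\vert^{pr}\in E(\tau)$ and $\Vert\,\vert y\vert^{pr}\Vert_{E(\tau)}\le\Vert\,(a^*\phi^2(\vert x^*\vert)a)^{pr/2}\Vert_{E(\tau)}$.

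Finally, since $\frac{1}{pr}+\frac{1}{qr}=\frac1r$ and $\vert y\vert^{pr},\vert z\vert^{qr}\in E(\tau)$, Corollary~\ref{corHo} applied to the product $yz$ with the pair of exponents $pr$ and $qr$ yields $\vert a^*xb\vert^r=\vert yz\vert^r\in E(\tau)$ and
$$
\Vert\,\vert a^*xb\vert^r\Vert_{E(\tau)}^{1/r}\le\Vert\,\vert y\vert^{pr}\Vert_{E(\tau)}^{1/(pr)}\,\Vert\,\vert z\vert^{qr}\Vert_{E(\tau)}^{1/(qr)} .
$$
Raising this to the power $r$ and inserting the two norm estimates of the preceding paragraph gives (\ref{eqnHo2}). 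I do not expect a serious obstacle: the whole argument rests on the factorisation furnished by Lemma~\ref{lemGMMN} and on the fact that Corollary~\ref{corHo} needs nothing more than symmetry of $E$. The only delicate point is the bookkeeping of adjoints in the definitions of $y$ and $z$, so that the two factors on the right of (\ref{eqnHo2}) come out as $\Vert\,\vert z\vert^{qr}\Vert_{E(\tau)}$ and an upper bound for $\Vert\,\vert y\vert^{pr}\Vert_{E(\tau)}$, with the contraction estimate for the partial isometry $u$ pointing in the direction needed.
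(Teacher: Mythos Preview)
Your proof is correct and follows essentially the same route as the paper's. The only cosmetic difference is that the paper applies Theorem~\ref{thmWI} (with $f(t)=t^r$) directly to the factorisation $a^*xb=(a^*\phi(\vert x^*\vert)u)(\psi(\vert x\vert)b)$, then invokes Theorem~\ref{thmKS} and the commutative H\"older inequality of Proposition~\ref{propHo} by hand, whereas you invoke Corollary~\ref{corHo}, which is precisely the packaged form of those three ingredients; the singular-value bookkeeping with $u$ and the adjoints is identical.
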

\begin{proof} Let $a,x,b\in S(\tau)$ and note that it follows from
Lemma~\ref{lemGMMN} that 
\begin{equation*}
a^*xb=a^*\phi(\vert x^*\vert ) u \psi(\vert x\vert) b,
\end{equation*}
where $x=u\vert x\vert$ is the polar decomposition.  Theorem~\ref{thmWI} with
$f(\cdot)=(\cdot)^r$ implies that
\begin{align*}
\vert a^*xb\vert ^r
&\lhd \mu^r(a^*\phi(\vert x^*\vert) u)\mu^r(\psi(\vert x\vert )b)
\\
&\leq \mu^r(\phi(\vert x^*\vert )a)\mu^r(\psi(\vert x\vert) b)
=\mu( (a^*\phi^2(\vert x^*\vert )a)^{\frac{r}{2}})
\mu((b^*\psi^2(\vert x\vert )b)^{\frac{r}{2}}).
\end{align*}
It now follows that
\begin{align*}
\Vert \vert a^*xb\vert ^r\Vert 
&\leq \Vert \mu( (a^*\phi^2(\vert x^*\vert )a)^{\frac{r}{2}})
\mu((b^*\psi^2(\vert x\vert )b)^{\frac{r}{2}})\Vert _E\\
&\leq 
\Vert  \mu^p( (a^*\phi^2(\vert x^*\vert )a)^{\frac{r}{2}})\Vert _E^{\frac{1}{p}}
\Vert \mu^q((b^*\psi^2(\vert x\vert )b)^{\frac{r}{2}})\Vert _E^{\frac{1}{q}}\\
&=
\Vert \left (a^*\phi^2(\vert x^*\vert)a\right)^{\frac{pr}{2}}\Vert
_{E(\tau)}^{\frac{1}{p}}
 \Vert \left (b^*\psi^2(\vert x\vert)b\right)^{\frac{qr}{2}}\Vert
_{E(\tau)}^{\frac{1}{q}},
\end{align*}
and this completes the proof of the proposition.
\end{proof}

For the special case that 
$\phi(\lambda)=\lambda ^{1/2}=\psi(\lambda),\ \lambda >0$ , we obtain the
following inequality. See ~\cite{BO} Corollary 2.

\begin{cor}
\label{corHo3} Suppose that $E\subseteq S(m)$ is a symmetric space. 
Let $r>0$ and suppose that $1<p,q<\infty$ satisfy $1/p+1/q=1$. If $a,b,x\in
S(\tau)$,then
\begin{equation}
\label{eqnHo3}
\Vert \vert a^*xb\vert ^r\Vert _{E(\tau)}
\leq \Vert \left (a^*\vert x^*\vert a\right)^{\frac{pr}{2}}\Vert
_{E(\tau)}^{\frac{1}{p}}
 \Vert \left (b^*\vert x\vert b\right)^{\frac{qr}{2}}\Vert
_{E(\tau)}^{\frac{1}{q}},
\end{equation}
whenever the right hand side is finite.
\end{cor}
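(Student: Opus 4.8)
The plan is to obtain this as the special case of Proposition~\ref{propHo2} in which the two Borel functions coincide with the square root. First I would set $\phi(\lambda)=\psi(\lambda)=\lambda^{1/2}$ for $\lambda>0$, note that this indeed maps $(0,\infty)$ into $(0,\infty)$, and verify the only constraint imposed on the pair $(\phi,\psi)$, namely $\phi(\lambda)\psi(\lambda)=\lambda$ for all $\lambda>0$; this is just $\lambda^{1/2}\cdot\lambda^{1/2}=\lambda$, and since $\lambda\mapsto\lambda^{1/2}$ is continuous on $(0,\infty)$ it is certainly Borel, so the choice is admissible.

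Next I would record that, in the Borel functional calculus, $\phi^2(\vert x^*\vert)=\vert x^*\vert$ and $\psi^2(\vert x\vert)=\vert x\vert$, whence
\begin{equation*}
\left(a^*\phi^2(\vert x^*\vert)a\right)^{\frac{pr}{2}}=\left(a^*\vert x^*\vert a\right)^{\frac{pr}{2}},\qquad\left(b^*\psi^2(\vert x\vert)b\right)^{\frac{qr}{2}}=\left(b^*\vert x\vert b\right)^{\frac{qr}{2}}.
\end{equation*}
Substituting these identities into inequality~(\ref{eqnHo2}) gives~(\ref{eqnHo3}) verbatim: the left-hand side is unchanged, and the standing hypothesis of Proposition~\ref{propHo2} that its right-hand side be finite becomes exactly the proviso in the corollary. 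Following the convention fixed just before Proposition~\ref{propHo2}, the finiteness of $\Vert\,\vert a^*xb\vert^r\Vert_{E(\tau)}$ asserted by the inequality already encodes the membership $\vert a^*xb\vert^r\in E(\tau)$, so nothing further is needed.

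Since all the substantive work --- the generalized polar decomposition (Lemma~\ref{lemGMMN}), the uniform-majorization inequality $\vert a^*xb\vert^r\lhd\mu^r(\cdot)\mu^r(\cdot)$ coming from Theorem~\ref{thmWI}, the transfer to symmetric-norm inequalities via Theorem~\ref{thmKS}, and the H\"older inequality of Proposition~\ref{propHo} --- has already been absorbed into Proposition~\ref{propHo2}, I do not anticipate any obstacle; the only point deserving even a line of comment is the measurability of the chosen square-root function, which is immediate here.
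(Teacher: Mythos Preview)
Your proposal is correct and matches the paper's own approach exactly: the paper derives Corollary~\ref{corHo3} from Proposition~\ref{propHo2} by the single substitution $\phi(\lambda)=\psi(\lambda)=\lambda^{1/2}$, just as you do. Your added verification that this choice satisfies the hypotheses of Proposition~\ref{propHo2} and that $\phi^2(\vert x^*\vert)=\vert x^*\vert$, $\psi^2(\vert x\vert)=\vert x\vert$ is entirely appropriate and slightly more explicit than the paper itself.
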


A standard argument (see, for example ~\cite {BO} Theorem 4, or ~\cite{Al} 
Theorem 21) now yields the following consequence.

\begin{cor}
\label{corHo2}
Suppose that $E\subseteq S(m)$ is symmetrically normed.
Let $r>0$ and suppose that $1<p,q<\infty$ satisfy $1/p+1/q=1$.
Let $\phi,\psi:(0,\infty)\to (0,\infty) $ be  Borel functions 
such that $\phi(\lambda)\psi(\lambda)=\lambda ,\ \lambda>0$. If $a_i,x_i,b_i\in
S(\tau),\ 1\leq i\leq n$, then
\begin{equation}
\label{eqnH022}
\Vert \vert \sum_{i=1}^na_i^*x_ib_i\vert ^ r\Vert _{E(\tau)}
\leq \left\Vert \left(
\sum_{i=1}^na_i^*\phi^2(\vert x_i^*\vert )a_i\right)^{\frac{pr}{2}}
\right\Vert _{E(\tau)}^{\frac{1}{p}}
\ 
\left\Vert \left(\sum_{i=1}^nb_i^*\psi^2(\vert x_i\vert)b_i\right)^{\frac{qr}{2}}
\right\Vert ^{\frac{1}{q}},
\end{equation}
whenever the right  hand side is finite.

\end{cor}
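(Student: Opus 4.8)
The plan is to deduce the statement from Proposition~\ref{propHo2} by the usual ampliation (block-matrix) device. First I would pass to the semifinite von Neumann algebra $\cN=M_n(\M)=\M\otimes M_n(\Complex)$, equipped with the faithful normal semifinite trace $\tau_n=\tau\otimes\mathrm{Tr}$, where $\mathrm{Tr}$ denotes the non-normalised trace on $M_n(\Complex)$ (so $\mathrm{Tr}(e_{11})=1$). Then $S(\tau_n)$ is canonically identified with the $*$-algebra of $n\times n$ matrices over $S(\tau)$, and for the given symmetrically normed $E\subseteq S(m)$ the noncommutative space $E(\tau_n)\subseteq S(\tau_n)$ is defined exactly as before; thus Proposition~\ref{propHo2} applies verbatim with $(\M,\tau)$ replaced by $(\cN,\tau_n)$.

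Next I would assemble the operators to which Proposition~\ref{propHo2} is to be applied. Writing $\{e_{ij}\}_{i,j=1}^{n}$ for the matrix units of $M_n(\Complex)$, set
\[
A=\sum_{i=1}^{n}e_{i1}\otimes a_i,\qquad X=\sum_{i=1}^{n}e_{ii}\otimes x_i,\qquad B=\sum_{i=1}^{n}e_{i1}\otimes b_i ,
\]
all in $S(\tau_n)$. A routine computation with matrix units gives $A^*XB=e_{11}\otimes\bigl(\sum_{i=1}^{n}a_i^*x_ib_i\bigr)$. Since $X$ is block-diagonal, $|X^*|=\sum_i e_{ii}\otimes|x_i^*|$ and $|X|=\sum_i e_{ii}\otimes|x_i|$, so the Borel functional calculus (acting blockwise on a direct sum) yields $\phi^2(|X^*|)=\sum_i e_{ii}\otimes\phi^2(|x_i^*|)$ and $\psi^2(|X|)=\sum_i e_{ii}\otimes\psi^2(|x_i|)$, and the same matrix-unit bookkeeping gives
\[
A^*\phi^2(|X^*|)A=e_{11}\otimes\Bigl(\sum_{i=1}^{n}a_i^*\phi^2(|x_i^*|)a_i\Bigr),\qquad B^*\psi^2(|X|)B=e_{11}\otimes\Bigl(\sum_{i=1}^{n}b_i^*\psi^2(|x_i|)b_i\Bigr),
\]
both of which are positive elements of $S(\tau_n)$.

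The observation that makes the reduction work is that the corner embedding $c\mapsto e_{11}\otimes c$ preserves singular value functions: since $\mathrm{Tr}(e_{11})=1$, for every $c\in S(\tau)$ one has $d(s;e_{11}\otimes c)=\tau_n\bigl(e_{11}\otimes e^{|c|}(s,\infty)\bigr)=d(s;c)$ for all $s\ge0$, hence $\mu(t;e_{11}\otimes c)=\mu(t;c)$ for all $t>0$, and therefore $\|e_{11}\otimes c\|_{E(\tau_n)}=\|c\|_{E(\tau)}$. Combining this with the identities $|e_{11}\otimes c|^r=e_{11}\otimes|c|^r$ and $(e_{11}\otimes d)^{\alpha}=e_{11}\otimes d^{\alpha}$ (for $0\le d\in S(\tau)$, $\alpha>0$), the three $E(\tau_n)$-norms occurring in Proposition~\ref{propHo2} applied to the triple $A,X,B$ are seen to equal, respectively, $\|\,|\sum_i a_i^*x_ib_i|^r\|_{E(\tau)}$, $\|(\sum_i a_i^*\phi^2(|x_i^*|)a_i)^{pr/2}\|_{E(\tau)}$ and $\|(\sum_i b_i^*\psi^2(|x_i|)b_i)^{qr/2}\|_{E(\tau)}$. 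Substituting these identities into the inequality furnished by Proposition~\ref{propHo2} in $(\cN,\tau_n)$ yields~(\ref{eqnH022}) at once.

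I do not expect a genuine obstacle: once the ampliation is set up the argument is entirely formal. The only points requiring (standard, but not completely trivial) care are the identification $S(\tau_n)\cong M_n(S(\tau))$ together with the fact that $E(\tau_n)$ is legitimately defined for a symmetrically normed $E$, the blockwise action of the Borel functional calculus on block-diagonal operators, and the invariance of $\mu$ under the corner embedding $c\mapsto e_{11}\otimes c$. It is also worth noting that the $n$-summand statement is obtained from, and inherits its hypotheses on $E$ from, whichever one-summand statement is invoked in $(\cN,\tau_n)$ — here Proposition~\ref{propHo2} — so the block-matrix reduction itself imposes nothing on $E$ beyond what is already assumed there.
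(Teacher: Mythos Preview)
Your proposal is correct and is precisely the ``standard argument'' the paper alludes to (referring the reader to \cite{BO} Theorem 4 and \cite{Al} Theorem 21 rather than writing out a proof): pass to $M_n(\M)$ with trace $\tau\otimes\mathrm{Tr}$, apply the one-term inequality (Proposition~\ref{propHo2}) to the column operators $A,B$ and the block-diagonal $X$, and then read off the result via the singular-value-preserving corner embedding $c\mapsto e_{11}\otimes c$. Your closing remark is also well taken: the hypothesis on $E$ is inherited from Proposition~\ref{propHo2} (which assumes $E$ is a symmetric \emph{Banach} space, needed for Theorem~\ref{thmKS}), so the weaker ``symmetrically normed'' in the statement of Corollary~\ref{corHo2} should really be read the same way.
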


We remark that Proposition~\ref{propHo2} and Corollary~\ref{corHo2} may be found
in ~\cite{Al}, Theorems 19 and  21 respectively, for the special case that
${\mathcal M}=B(H)$. See also ~\cite{Ki1999}.  In the present setting of $\tau$-measurable operators, the
special case of Proposition~\ref{propHo2} and Corollary~\ref{corHo2} obtained by
setting $\phi(\lambda)=\lambda ^{1/2}=\psi(\lambda),\ \lambda >0$ is proved in
~\cite{BO}.  However, the present approach via Theorem ~\ref{thmWI} is more
direct and transparent.

\def\cprime{$'$} \def\cprime{$'$} \def\cprime{$'$}
\providecommand{\bysame}{\leavevmode\hbox to3em{\hrulefill}\thinspace}

\end{document}